\documentclass[11pt]{amsart}

\usepackage{amsmath}
\usepackage{amssymb}
\usepackage{graphicx}
\usepackage{geometry}
\usepackage{enumerate}  
\newtheorem{theorem}{Theorem}[section]
\newtheorem{corollary}[theorem]{Corollary}
\newtheorem{lemma}[theorem]{Lemma}
\newtheorem{proposition}[theorem]{Proposition}
\theoremstyle{definition}
\newtheorem{definition}[theorem]{Definition}

\newtheorem{remark}[theorem]{Remark}
\numberwithin{equation}{section}

\title[Survey of Metric fixed point theory]{Survey of Metric fixed point theory in random functional analysis}

\author[T. X. Guo]{Tiexin Guo*}
\address[T. X. Guo]{School of Mathematics and Statistics, Central South University,
	Changsha {\rm 410083}, Hunan Province, P. R. China}
\email{tiexinguo@csu.edu.cn}
\thanks{*Corresponding author}

\author[Q. Tu]{Qiang Tu}
\address[Q. Tu]{School of Mathematics and Statistics, Lingnan Normal University,
	Zhanjiang {\rm524048}, Guangdong Province, P. R. China}
\email{qiangtu126@126.com}

\author[X. H. Mu]{Xiaohuan Mu}

\address[X. H. Mu]{School of Mathematics and Statistics, Lingnan Normal University,
	Zhanjiang {\rm524048}, Guangdong Province, P. R. China}
\email{xiaohuanmu@163.com}

\author[Y. Y. Sun]{Yuanyuan Sun}

\address[Y. Y. Sun]{School of Mathematics and Statistics, Central South University,
	Changsha {\rm 410083}, Hunan Province, P. R. China}
\email{{yuanyuansun1205@163.com}}

\keywords{Random metric spaces, Random normed modules, Metric fixed point theory, random functional analysis,  random operators, random fixed point theorems}

\subjclass[2010]{46B20, 46H25, 47H09, 47H10, 47H40, 60H25}

\begin{document}
\maketitle


\begin{center}
	\normalsize{ \textit{Dedicated to Professor Goong Chen on the occasion of his 75th birthday.}}
\end{center}

\begin{abstract}
Based on the idea of randomizing the traditional space theory of functional analysis, random functional analysis has been developed as functional analysis over random metric spaces, random normed modules and random locally convex modules. Since these random frameworks have much more complicated algebraic, topological and geometric structures than their prototypes, the development of fixed point theory in random functional analysis had been almost stagnant before 2010. Unexpectedly, with the deep development of stable set theory fixed point theory in random functional analysis, including both its metric and topological fixed point theory, has made considerable  progress in the recent 15 years. The purpose of this paper is to survey the important progress in metric fixed point theory in random functional analysis, including the random Banach contraction mapping principle and Caristi fixed point theorem on complete  random metric spaces,  and fixed point theorems for random nonexpansive and asymptotically nonexpansive mappings in complete random normed modules. Besides, the connections among the topics surveyed, random equations and random fixed point theorems for random operators are also briefly mentioned.
\end{abstract}

\maketitle
\makeatletter
\newcommand\blfootnote[1]{%
	\begingroup
	\renewcommand\thefootnote{}\footnote{#1}%
	\addtocounter{footnote}{-1}%
	\endgroup
}
\makeatother

\blfootnote{This work was supported by the National Natural Science Foundation of China (Grant Nos.12371141) and the Natural Science Foundation of Hunan Province of China (Grant No.2023JJ30642).}


\section{Introduction}
\label{intro}
\par
Fixed point theory in metric spaces mainly consists of metric fixed point theory and topological fixed point  theory. Metric fixed point theory includes the  celebrated Banach 
contraction mapping principle \cite{B1922},  Caristi fixed point theorem \cite{C1976}, Browder-G\"ohde-Kirk fixed point theorem for nonexpansive mappings \cite{B1965, Kirk1965, 
KS2001}, Brodskii-Milman-Lim common fixed point theorems for families of  isometric and nonexpansive   mappings \cite{BD1948, L1974a, L1974b, LLPV2003}, and Goebel-Kirk fixed point theorem \cite{KS2001} and Xu's demiclosedness principle \cite{X91} for asymptotically 
 nonexpansive mappings, see \cite{GR1984, Kirk1981, KS2001}  for the related  excellent  literatures surveying metric fixed point theory. Topological fixed point theory includes the celebrated 
 Brouwer and Schauder fixed point theorems \cite{B1912, S1930}.  Fixed point theory has rich applications in analysis,  topology, various kinds of equations, 
 optimization and mathematical economics \cite{GD2003}.

\par
Started in 1942, the idea of randomizing the traditional 
space theory or operator theory of functional analysis 
emerged. Because fixed point theory has the 
fundamental importance in functional analysis and its 
applications, fixed point theory has been naturally 
extended in the corresponding random ways. The following 
two paragraphs will give a brief historical review of 
the related developments

\par
The idea of randomizing the traditional space theory of 
functional analysis dates back to the theory of 
probabilistic metric spaces, which was initiated by K. 
Menger and subsequently founded by B. Schweizer, A. 
Sklar, A. N. \v{S}erstnev and others \cite{SS2005}. Perhaps, Menger's 
original idea was to place inherent uncertainties of the 
measurements in science into metric geometry, he 
introduced a probabilistic metric space as a 
probabilistic generalization of a metric space, in which 
the probabilistic distance between two points is defined 
as a distance distribution function, similarly, a 
probabilistic normed space was also introduced, see 
\cite{SS2005} for concrete details. Because K. Menger, B. 
Schweizer and A. Sklar emphasized the idea of defining 
the randomness of distances by distribution functions 
rather than random variables, the development of the 
theory of probabilistic metric spaces succeeded only in 
the study of the topological theory of probabilistic 
metric spaces and the development of fixed point 
theorems for contraction mappings with respect to 
probabilistic metric, whereas, compared with the deep 
theory of normed spaces, the theory of probabilistic 
normed spaces has not made any substantial progress yet 
so far, since most of probabilistic normed spaces are 
non-locally convex spaces under the conventionally used 
$(\varepsilon,\lambda)$-topology (that is to say, the theory of 
conjugate spaces, as a powerful tool for locally convex 
spaces, universally fails for probabilistic normed 
spaces). Therefore, although  random metric spaces 
and random normed spaces were earlier introduced in \cite[chapters 9 and 15]{SS2005}, they had not been deeply developed 
before 1983 since then they were only regarded as a subclass 
of probabilistic metric and normed spaces, respectively. 
Our breakthrough came in 1989 \cite{G1992, G1993}, by paying 
attention to the unique structure of random metric and 
normed spaces (for example, they possess the stronger 
triangle inequality  and richer measure-theoretic 
structure), proposing an equivalent formulation of their 
original definitions and in particular presenting the 
idea of random conjugate spaces suitable for the deep 
development of random normed spaces. The key step in our 
breakthrough is the introducing of random normed 
modules, which are an important class of random normed 
spaces and only for which the theory of random conjugate 
spaces can be so deeply developed that it may be 
comparable to the theory of conjugate spaces for normed 
spaces. Before 2009,  the theory of random normed 
modules and more general random locally convex modules 
has obtained  a systematic and deep development under the 
$(\varepsilon,\lambda)$-topology, see \cite{G2010} for details, 
in particular, some of these developments are consistent 
with some foundational studies on nonsmooth differential 
geometry on metric measure spaces developed by Gigli, Pasqualetto and others \cite{BPS2023,CLP2025,G2018, GMT2024, LPV2024}. In 2009, 
Filipovi\'{c} et al. \cite{FKV2009}  introduced the locally 
$L^{0}$-convex modules with the aim of establishing a 
random convex analysis over these modules to meet the needs of conditional 
convex risk measures, which also led them to introducing 
another kind of topology for random locally convex 
modules, called the locally $L^{0}$-convex topology. 
The  locally $L^{0}$-convex topology is much stronger 
than the $(\varepsilon,\lambda)$-topology, and even not 
a  linear topology. Following \cite{FKV2009}, Guo \cite{G2010} introduced 
the notion of $\sigma$-stability for subsets of an 
$L^{0}$-module and established the intrinsic connection 
between the basic theories derived from the two kinds of 
topologies for random locally convex modules. Since Guo's work \cite{G2010}, the theory of random normed modules and random 
locally convex modules has entered a new developing model, 
namely, a model of simultaneously considering the two 
kinds of topologies, which makes functional  analysis on 
random metric spaces, random normed modules and random 
locally convex modules, aptly called random functional 
analysis, have undergone a thorough development as a 
whole. In particular, stimulated by financial 
applications, random nonlinear functional analysis 
(mainly, fixed point theory in random normed modules) 
has made a remarkable progress in the past 15 years. In \cite{GWYZ2020, GY2012}, Ekeland variational principle and Caristi 
fixed point theorem on complete random metric 
spaces were established. Since it does not make sense to speak of weak compactness for a closed $L^{0}$-convex subset of a random normed module, in 2020 the notion of an 
$L^{0}$-convex compactness introduced in \cite{GZWW2021} as a proper substitution for the notion of weak compactness was used 
to establish the Browder-G\"{o}hde-Kirk fixed point 
theorem for nonexpansive mappings in complete random 
normed modules in \cite{GZWG2021}. In 2025, the stable set theory 
as an abstract generalization of $\sigma$-stability was 
employed in \cite{MTGX2025} in an unexpected way to introduce the 
notion of a complete random normal structure and prove 
that it is equivalent to the notion of random normal 
structure for an $L^{0}$-convexly compact set, so that 
we can establish several important common fixed point theorems for
 families of isometric and nonexpansive mappings in complete 
random normed modules, which solves a central problem in 
metric fixed point theory in random functional analysis. 
Recently, by means of the relation between geometry of 
random normed modules and geometry of normed spaces, several important
fixed point theorems for a random asymptotically 
nonexpansive mapping in a complete random uniformly 
convex random normed module were also given in \cite{SGT2025, SGT2026}. All the results, 
combined with the easier random contraction fixed point 
theorems in complete random metric spaces \cite{G1992,G1999,GWYZ2020}, 
have shaped an almost complete metric fixed point theory 
in random functional analysis. At the same time as we 
have made the considerable progress in the metric fixed 
point theory, we also have made important breakthroughs 
in the study of topological fixed point theory. Since 
most of closed $L^{0}$-convex subsets in random normed 
modules, which frequently occur in dynamic equilibrium 
theory, stochastic control and stochastic analysis are 
not compact in general, in \cite{GWXYC2025,TMGC2025} Guo and his coauthors 
developed a theory of random sequentially compact sets,  
proved that the closed $L^{0}$-convex subsets are all 
random sequentially compact, and further established the 
random Brouwer fixed point theorem in random Euclidean 
spaces \cite{TMGC2025} and random (also called noncompact) Schauder 
fixed point theorem in random normed modules \cite{GWXYC2025}, where 
the main challenge lies in overcoming noncompactness. 
Similarly, the other topological fixed point theorems \cite{TMG2024,TMGYS2025} in random functional analysis were also 
established. Since a thorough (or ripe) topological 
fixed point theory in random functional analysis will 
need a systematic theory of stable compactness, which is 
more general than the theory of random sequential 
compactness and being under development, this paper only 
gives a complete survey of metric fixed point theory in 
random functional analysis, we will leave the results \cite{GWXYC2025,TMG2024,TMGC2025,TMGYS2025} and nearly finished 
other topological fixed point theorems together to 
another future survey paper.

\par
Random operator theory (based on the idea of randomizing the traditional operator theory of functional analysis) was 
systematically initiated by the Prague school of 
probabilists (mainly, A. \v{S}pace\v{k} and O. Han\v{s}) 
in 1950s, with the aim of using random operator 
equations to precisely model various systems, see \cite{B1976,H1957} 
for details. In the following 20  years, random fixed 
point theorems for random operators had obtained a rapid
and deep development, Bharucha-Reid's survey  paper \cite{B1976}
is an important mark of such progress, where random versions 
of the Banach contraction mapping principle and Schauder 
fixed point theorem were summarized. Subsequently, 
fixed point theorems for nonexpansive 
random operators were also obtained, for example, see \cite{GZWY2020, Xu1993}
and the related references therein. Methods of 
random fixed point theory for random operators can be 
characterized as follows: the existence of 
random fixed points for a random operator depend on 
classical fixed point theorems, and then the proofs of 
measurability for them depend on either the method of 
successive approximations \cite{H1957,N1969} or measurable 
selection theorems \cite{W1977}.

\par
In concluding this introduction, we would like to 
briefly compare fixed point theory in random functional 
analysis and random point theory for random operators. 
The traditional space theory is the cornerstone of  
functional analysis. The development of randomizing the 
traditional space theory encountered many 
challenges from the outset, for example, random normed modules and 
random locally convex modules are random generalizations 
of normed spaces and locally convex spaces, 
respectively, but they are non-locally convex in general 
under the $(\varepsilon,\lambda)$-toplology, 
consequently, the theory of conjugate (or, dual) spaces that is a powerful tool for locally convex 
spaces, universally fails for these random frameworks, 
which has forced us to develop systematically the theory of 
random conjugate spaces. Nowadays, random normed modules 
have been used as the model spaces for conditional 
convex risk measures \cite{FKV2009,G2024}, which has stimulated a 
systematic development of random convex analysis \cite{G2024,GZWYYZ2017}. 
Thanks to the significant contributions of Gigli, Pasqualetto and others \cite{BPS2023,CLP2025,G2018,LPV2024}, the theory of random 
normed modules has played more and more essential roles 
in the development of nonsmooth differential geometry. 
The development of fixed point theory in random function 
analysis in the past 15 years is rather demanding, which 
needs the simultaneous employment of the two kinds of 
topologies, the theory of random conjugate spaces, the 
theory of stable compactness and geometry of random 
normed modules. In such a long and slow developing 
process, we have gradually found an interesting 
phenomenon: random fixed point theory for random 
operators can be regained as a direct corollary of fixed 
point theory in random functional analysis in a 
natural way, namely, by considering the random 
metric spaces or random normed modules consisting 
equivalence classes of random elements, lifting a random 
operator to an usual mapping defined on these random 
spaces and thus converting the random fixed point 
problems of  a random operator into those of the lifted 
operator, which is completely the method of functional 
analysis and even need not employ measurable selection 
theorems, see \cite{GWXYC2025,GWYZ2020,GZWY2020,MTGX2025} for details. Even more, when 
measurable selection theorems fail, our method can still 
deal with some complicated random fixed point problems, 
see \cite{MTGX2025} for interesting examples. Thus, we will also 
again mention the concrete details for such a phenomenon 
in the surveying process. We would like to point out that such a phenomenon itself not only displays the power of the idea of randomizing space theory but also reveals the essence of random fixed point theory for random operators.

\par
The remainder of this paper is organized as follows. 
Section \ref{section2} first provides some prerequisites for this 
paper and the reader's convenience, including some known 
basic concepts such as stable sets, random metric 
spaces, random normed modules and random operators 
together with some known propositions. Section \ref{section3} is 
devoted to surveying the random Banach contraction mapping 
principle on complete random metric spaces. Section \ref{section4} is 
devoted to surveying the Ekeland variational principle 
and Caristi fixed point theorem on complete random 
metric spaces. Section \ref{section5} is devoted to surveying the 
Browder-G\"ohde-Kirk fixed point theorem for 
nonexpansive mappings  in complete random normed modules. 
Section \ref{section6} is devoted to surveying the common fixed point 
theorems for families of nonexpansive or isometric 
mappings in a complete random normed module. 
Section \ref{section7} is devoted to surveying fixed point theorems 
for random asymptotically nonexpansive mappings in a 
complete random uniformly convex random normed module. Finally, Section \ref{section8} concludes this paper with some remarks concerning the future possible applications of fixed  point theory in random functional analysis.


\section{Prerequisites}\label{section2}

For clarity, the section is divided into three subsections. \\

\noindent2.1. Stable sets.\\

In this section, $B:=(B,\vee, \wedge, ', 0,1)$ always denotes a
complete Boolean algebra, namely, a complete 
complemented distributive lattice with the smallest 
element $0$ and the greatest element $1$. A nonempty subset 
$\{a_{i},i\in I\}$ of B is called a partition of unity in B if 
$\bigvee_{i\in I}a_{i}=1$ and $a_{i}\wedge a_{j}=0$ for 
any $i,j\in I$ with $i\neq j$.
\begin{definition}[{\cite{GMT2024}}]\label{definition2.1}
	Let $X$ be a nonempty set. An equivalence relation 
	$\sim$ on $X\times B$ (denote by $x|a$ the 
	equivalence class of $(x,a)\in X\times B$) is said 
	to be $B$-regular if the following conditions are 
	satisfied:
	\begin{enumerate}[(1)]
	
\item	$x|a=y|b$ implies $b=a$;
	\item  $x|a=y|a$ implies $x|b=y|b$ for any $b\leq a$;
	\item $\{a\in B: x|a=y|a\}$ has a greatest element 
	for any $x$ and $y$ in $X$;
	\item $x|1=y|1$ implies $x=y$.
	\end{enumerate}
	In addition, given a $B$-regular equivalence 
	relation $\sim$ on $X \times B$, a nonempty subset 
	$G$ of $X$ is said to be $B$-stable (finitely 
	$B$-stable) with respect to $\sim$ if, for each 
	nonempty subset $\{x_{i}, i \in I\}$ (accordingly, 
	for each finite nonempty subset $\{x_{i}, i=1\sim n\}$) in 
	$G$ and each partition $\{a_{i}, i \in I\}$ 
	(accordingly, each finite partition $\{a_{i}, 
	i=1\sim n\}$) of unity, there exists $x \in G$ such 
	that $x|a_{i}= x_{i}|a_{i}$ for each $i \in I$ 
	(accordingly, $x|a_{i}= x_{i}|a_{i}$ for any 
	$i=1\sim n$). 
\end{definition}
\par
\begin{remark}\label{remark 2.2}
By (3) and (4) in Definition \ref{definition2.1}, 
	$x$ in the definition of $B$-stability  must be 
	unique, $x$ is always  denoted by $\sum_{i\in I} 
	x_{i}|a_{i}$, called the concatenation of $\{x_{i}, 
	i\in I\}$ along $\{a_{i},i\in I\}$. In \cite{GMT2024}, we 
	proved that a $B$-regular relation on $X\times B$ 
	amounts to a $B$-valued metric $d$ on $X$, and that 
	$G$ is $B$-stable with respect to $\sim$ iff $G$ is 
	universally complete with respect to $d$, but our 
	notion of a $B$-stable set is more convenient for 
	applications in random functional analysis since it 
	is more  easier and direct to construct a 
	$B$-regular equivalence relation than a $B$-valued 
	metric, see \cite{KK1999} for the notions of a B-valued metric and a universally complete set. Finally, when $X$ itself is $B$-stable, our 
	notion of a $B$-stable set amounts to the notion of 
	a conditional set, which was introduced in \cite{DJKK2016}, but 
	our notion is still more convenient and direct in 
	applications since that of a conditional set was 
	presented in a non-traditional set theory and thus 
	is also inconvenient to apply. 
\end{remark}
\par
 Throughout this paper, let  $(\Omega,\mathcal{F},P)$ 
 always  denote a probability space, $\mathbb{K}$ the 
 scalar field $\mathbb{R}$ of real numbers or 
 $\mathbb{C}$ of complex numbers,  
 $L^{0}(\mathcal{F},\mathbb{K})$ the algebra of 
 equivalence classes of $\mathbb{K}$-valued 
 $\mathcal{F}$-measurable functions on $\Omega$, and 
 $B_{\mathcal{F}}$ the complete Boolean 
 algebra of equivalence classes of 
 $\mathcal{F}$-measurable subsets of $\Omega$. For any 
 $A \in \mathcal{F}$, we often use the corresponding 
 lower case $a$ for the equivalence class of $A$. 
 Conventionally, $B_{\mathcal{F}}$ is called the measure 
 algebra associated with $(\Omega, \mathcal{F}, P)$. It 
 is well known that $\{i \in I: a_{i} > 0\}$ must be at 
 most countable for any partition $\{a_{i}, i \in I\}$ 
 of unity in $B_{\mathcal{F}}$, and hence we may 
 consider only countable partitions of unity in 
 $B_{\mathcal{F}}$.
 \par
 We also always use $\mathbb{N}$ for the set of positive 
 integers. Let us recall that a left module $E$ over 
 $L^{0}(\mathcal{F}, \mathbb{K})$ (briefly, an 
 $L^{0}(\mathcal{F}, \mathbb{K})$-module) is said to be 
 regular if $E$ has the property: for any two elements 
 $x$ and $y$ in $E$, if there exists a countable 
 partition $\{A_{n}, n \in \mathbb{N}\}$ of $\Omega$ to 
 $\mathcal{F}$ such that $\tilde{I}_{A_{n}} x = 
 \tilde{I}_{A_{n}} y$ for each $n \in \mathbb{N}$, then 
 $x = y$, where $\tilde{I}_{A_{n}}$ denotes the 
 equivalence class of the characteristic function 
 $I_{A_{n}}$ of $A_{n}$ (namely, 
 $I_{A_{n}}(\omega) = 1$ when $\omega \in A_{n}$ and $0$ 
 otherwise).
 \par
 From now on, we always assume that all the 
 $L^0(\mathcal{F}, \mathbb{K})$-modules occurring in 
 this paper are regular since all random normed modules 
 are  known to be regular.
 
 \begin{definition}[{\cite{G2010}}]\label{definition2.3}
   Let $E$ be an $L^0(\mathcal{F}, \mathbb{K})$-module. 
   A nonempty subset $G$ of $E$ is said to be finitely 
   stable if $\tilde{I}_{A} x + \tilde{I}_{A^{c}} y \in 
   G$ for any $x$ and $y$ in $G$ and any $A \in 
   \mathcal{F}$, where $A^{c} = \Omega \setminus A$. A 
   nonempty subset $G$ of $E$ is said to be 
   $\sigma$-stable if, for each sequence $\{x_{n}, n \in 
   \mathbb{N}\}$ in $G$ and any countable partition 
   $\{A_{n}, n \in \mathbb{N}\}$ of $\Omega$ to 
   $\mathcal{F}$, there exists $x \in G$ such that 
   $\tilde{I}_{A_{n}} x = \tilde{I}_{A_{n}} x_{n}$ for 
   each $n \in \mathbb{N}$ (such an $x$ must be unique 
   since $E$ is regular), denoted by $\sum_{n \in 
   	\mathbb{N}} \tilde{I}_{A_{n}} x_{n}$,  and called 
   	the countable concatenation of $\{x_{n}, n \in 
   	\mathbb{N}\}$ along $\{A_{n}, n \in \mathbb{N}\}$.
\end{definition}
\par
\begin{remark}\label{remark2.4}
Let $E$ be an $L^0(\mathcal{F}, \mathbb{K})$-module. 
Define a relation $\sim$ on $E \times B_{\mathcal{F}}$ 
by $(x, a) \sim (y, b)$ if $a = b$ and $I_{a} x = I_{a} 
y$, where $I_{a} = \tilde{I}_{A}$ for $a = [A]$ (namely, $a$ is
the equivalence class of $A \in \mathcal{F}$). It is 
easy to see that $\sim$ is a $B_{\mathcal{F}}$-regular 
equivalence relation and that a nonempty subset $G$ of 
$E$ is $\sigma$-stable iff $G$ is 
$B_{\mathcal{F}}$-stable with respect to $\sim$. If $G$ 
is $\sigma$-stable, we can also easily verify that 
$\sum_{n \in \mathbb{N}} \tilde{I}_{A_{n}} x_{n} = 
\sum_{n \in \mathbb{N}} x_{n} | a_{n}$ for any sequence 
$\{x_{n}, n \in \mathbb{N}\}$ in $G$ and any countable 
partition $\{A_{n}, n \in \mathbb{N}\}$ of $\Omega$ to 
$\mathcal{F}$, where $a_{n}$ denotes the equivalence 
class of $A_{n}$ according to our convention. Therefore, 
the notion of a $B$-stable set is exactly an abstract 
generalization of that of a $\sigma$-stable set from an 
$L^0(\mathcal{F}, \mathbb{K})$-module to any nonempty set!
\end{remark}
\par
\begin{definition}\label{definition2.5}
Let $X$ and $Y$ be two $B$-stable sets. A mapping (or, function) 
$f: X \to Y$ is said to be $B$-stable if $f(\sum_{i \in 
	I} x_{i} | a_{i}) = \sum_{i \in I} f(x_{i}) | a_{i}$ 
	for any nonempty subset $\{x_{i}, i \in I\}$ in $X$ 
	and any partition $\{a_{i}, i \in I\}$ of unity.
\end{definition}

\par
When $X$ and $Y$ are two $\sigma$-stable subsets of two 
$L^{0}(\mathcal{F}, \mathbb{K})$-modules, respectively, a 
$\sigma$-stable mapping $f: X \to Y$, introduced in 
\cite{GZWG2021}, is exactly a special case of a 
$B_{\mathcal{F}}$-stable mapping in the sense of Definition \ref{definition2.5}.
\par
\begin{definition}\label{definition2.6}
Let $X$ be a $B$-stable set and $\mathcal{E}$ be a 
nonempty family of $B$-stable subsets of $X$. Further, 
let $s(\mathcal{E}) = \{\sum_{i \in I} G_{i} | a_{i}$: 
$I$ is a nonempty index set, $\{G_{i}, i \in I\}$  a
nonempty subfamily of $\mathcal{E}$, $\{a_{i}, i \in 
I\}$ a partition of unity in $B$\}, called the 
stabilized family of $\mathcal{E}$, where $\sum_{i \in I} G_{i} | a_{i} = 
\{\sum_{i \in I} g_{i} | a_{i}: g_{i} \in G_{i} \text{ 
	for any } i \in I\}$. $\mathcal{E}$ is said to be 
	$B$-stable if $s(\mathcal{E}) = \mathcal{E}$. In addition, let $\mathcal{E}=\{G_{\alpha}, \alpha \in 
	\Lambda\}$ be a nonempty family of $B$-stable subsets of 
	$X$ and $\Lambda$ be also a $B$-stable set, then 
	$\mathcal{E}$ is said to be a $B$-consistent family if 
	$G_{\sum_{i \in I} \alpha_{i} | a_{i}} = \sum_{i \in I} 
	G_{\alpha_i} | a_i$ for any nonempty subset $\{\alpha_i, 
	i \in I\}$ of $\Lambda$ and any partition $\{a_i, i \in 
	I\}$ of unity in $B$.
\end{definition}	
Clearly, a B-consistent family must be a $\sigma$-stable family. $A$ special case of a $B$-consistent family, where 
$\Lambda$ is a $B_{\mathcal{F}}$-stable set and $\{G_\alpha, 
\alpha \in \Lambda\}$ is a nonempty subfamily of 
$\sigma$-stable subsets of a $\sigma$-stable set of an 
$L^0(\mathcal{F}, \mathbb{K})$-module, had been 
introduced in \cite[Definition 1.6]{MTGX2025} and played a crucial 
role in \cite{MTGX2025}.\\

\noindent2.2. Random metric spaces and random normed modules.\\

Throughout this paper, let $\bar{L}^{0}(\mathcal{F})$ be 
the set of equivalence classes of extended real-valued random variables on 
$(\Omega, \mathcal{F}, P)$ and $L^{0}(\mathcal{F}):= 
L^{0}(\mathcal{F}, \mathbb{R})$. It is well known from \cite{DS1958}
that $\bar{L}^{0}(\mathcal{F})$ is a complete lattice under the partial order $\leqslant$: $\xi \leqslant \eta$ iff $\xi^0(\omega) \leqslant \eta^0(\omega)$ a.s., where $\xi^{0}$ and $\eta^{0}$ are arbitrarily chosen representatives of $\xi$ and $\eta$, respectively. $\bigvee H$ and $\bigwedge H$ stand for the supremum and infimum of a nonempty subset $H$ of $\bar{L}^{0}(\mathcal{F})$, respectively. Also, $L^0(\mathcal{F})$ is a Dedekind complete lattice as a sublattice of $\bar{L}^{0}(\mathcal{F})$.
\par
As usual, for any $\xi$ and $\eta$ in $\bar{L}^{0}(\mathcal{F})$, $\xi > \eta$ means $\xi \geq \eta$ but $\xi \neq \eta$, whereas $\xi > \eta$ on $A$ means $\xi^{0}(\omega) > \eta^{0}(\omega)$ for almost all $\omega \in A$, where $A \in \mathcal{F}$ and $\xi^{0}$ and $\eta^{0}$ are arbitrarily chosen representatives of $\xi$ and $\eta$, respectively.
\par
Besides, the following notations will also be employed:
\par $L^{0}_{+}(\mathcal{F})=\{\xi \in L^{0}(\mathcal{F})~|~\xi \geq 0\}$;
\par $L^{0}_{++}(\mathcal{F})=\{\xi \in L^{0}(\mathcal{F})~|~\xi>0~\text{on}~\Omega\}$.
\par

\begin{definition}[{\cite{G1992}}]\label{definition2.7}
An ordered pair $(E, d)$ is called a random metric space (briefly, an $RM$ space) with base $(\Omega, \mathcal{F}, P)$, where $E$ is a nonempty set and $d$ is a mapping from $E\times E$ to  $L^{0}_{+}(\mathcal{F})$ such that the following are satisfied:
\begin{enumerate}[(RM-1)]
	\item $d(x,y) = 0$ iff $x = y$;
	\item $d(x,y) = d(y,x)$ for any $x$ and $y$ in $E$;
	\item $d(x,z) \leqslant d(x,y) + d(y,z)$ for any $x,y,z$ in $E$.
\end{enumerate}
As usual, $d$ is called the random metric on $E$.
\end{definition}
\par
\begin{definition}[{\cite{G1992}}]\label{definition2.8}
An ordered pair $(E, \|\cdot\|)$ is called a random normed space (briefly, an $RN$ space) over $\mathbb{K}$ with base $(\Omega, \mathcal{F}, P)$, where $E$ is a linear space over $\mathbb{K}$ and $\|\cdot\|$ is a mapping from $E$ to $L^{0}_{+}(\mathcal{F})$ such that the following are satisfied:
\begin{enumerate}[(RN-1)]
	\item $\|x\| = 0$ iff $x = \theta$ (the null in $E$);
	\item $\|\alpha x\| = |\alpha| \|x\|$ for any $\alpha \in \mathbb{K}$ and any $x \in E$;
	\item $\|x + y\| \leqslant \|x\| + \|y\|$ for any $x$ and $y$ in $E$.
\end{enumerate}
As usual, $\|\cdot\|$ is called the random norm on $E$.
If, in addition, $E$ is an $L^0(\mathcal{F}, \mathbb{K})$-module such that the following
\begin{enumerate}[(RNM-1)]
	\item $\|\xi x\| = |\xi| \|x\|$ for any $\xi \in L^{0}(\mathcal{F}, \mathbb{K})$ and $x \in E$.
\end{enumerate}
is also satisfied, then the $RN$ space $(E, \|\cdot\|)$ is called a random normed module (briefly, an $RN$ module) over $\mathbb{K}$ with base $(\Omega, \mathcal{F}, P)$, in which case the random norm $\|\cdot\|$ is also called an $L^{0}$-norm.
\end{definition}
\par
Obviously, when $(E, \|\cdot\|)$ is an $RN$ module, (RN-2) may be omitted since (RNM-1) has implied (RN-2). Historically, the original definition of an $RM$ space was introduced in \cite[Chapter 9]{SS2005}, where $d(x,y)$ is defined as a random variable. Similarly, the original definition of an $RN$ space was also mentioned in \cite[Chapter 15]{SS2005}. In Definitions \ref{definition2.7} and \ref{definition2.8}, we define them with $d(x,y)$ or $\|x\|$ as equivalence classes of random variables, which has provided much convenience for applications in functional analysis. The introducing of an $RN$ module is a key step in the development of  random functional analysis since random functional analysis is merely an empty shell without the notion of an $RN$ module!
\par
\begin{definition}\label{definition2.9}
	 Let $(E, d)$ be an $RM$ space with base $(\Omega, \mathcal{F}, P)$. For any given positive numbers $\varepsilon$ and $\lambda$ with $0 < \lambda < 1$, let $U(\varepsilon, \lambda) = \{(x, y) \in E \times E: P\{\omega \in \Omega: d(x,y)(\omega) < \varepsilon\} > 1 - \lambda\}$, then $\{U(\varepsilon, \lambda): \varepsilon > 0, 0 < \lambda < 1\}$ forms a base for some metrizable uniform structure on $E$, called the $(\varepsilon, \lambda)$-uniformity induced by $d$ and denoted by $\mathcal{U}$. Further, if $(E, d)$ is $\mathcal{U}$-complete, then $(E, d)$ is said to be  $(\varepsilon, \lambda)$-complete. By the way, the topology induced by $\mathcal{U}$ is called the $(\varepsilon, \lambda)$-topology, denoted by $\mathcal{T}_{\varepsilon, \lambda}$.
\end{definition}
\par
The idea of introducing the $(\varepsilon, 
\lambda)$-uniformity and the $(\varepsilon, 
\lambda)$-topology is essentially due to B. Schweizer 
and A. Sklar \cite{SS2005}. For an $RN$ space $(E, \|\cdot\|)$, 
$E$ is, of course, an $RM$ space under the random  
metric $d$ defined by $d(x,y) = \|x - y\|$ for any $x$ 
and $y \in E$, it is easy to verify that 
$\mathcal{T}_{\varepsilon, \lambda}$ on $E$ is a linear topology, and thus we conventionally say that $(E, 
\|\cdot\|)$ is $\mathcal{T}_{\varepsilon, 
\lambda}$-complete if $(E, d)$ is $(\varepsilon,\lambda)$-complete. The algebra 
$L^0(\mathcal{F}, \mathbb{K})$ is clearly an $RN$ module 
under the $L^0$-norm $\|\cdot\|$ defined by $\|\xi\| = 
|\xi|$ for any $\xi \in L^0(\mathcal{F}, \mathbb{K})$, 
where $|\cdot|$ stands for the usual absolute value 
mapping, one can easily see that the $(\varepsilon, 
\lambda)$-topology on $L^0(\mathcal{F}, \mathbb{K})$ is 
exactly the topology of convergence in probability, and 
thus $L^0(\mathcal{F}, \mathbb{K})$ is a topological algebra over $\mathbb{K}$, namely the algebraic multiplication operation is jointly continuous. Conventionally, for any $RM$ space $(E, d)$, we always use $\mathcal{T}_{\varepsilon, \lambda}$ for the $(\varepsilon, \lambda)$-topology on $E$, then, for an $RN$ module $(E, \|\cdot\|)$ over $\mathbb{K}$ with base $(\Omega, \mathcal{F}, P)$, one can easily check that $(E, \mathcal{T}_{\varepsilon, \lambda})$ is a topological module over the topological algebra $(L^{0}(\mathcal{F}, \mathbb{K}), \mathcal{T}_{\varepsilon, \lambda})$.
\par
Motivated by the idea of Filipovi\'{c} et al.'s introducing the locally $L^{0}$-convex topology for a random normed module in \cite{FKV2009}, the following $L^{0}$-uniformity for an $RM$ space was introduced in \cite{GWYZ2020}.
\par

\begin{definition}[{\cite{GWYZ2020}}]\label{definition2.10}
 Let $(E, d)$ be an $RM$ space with base $(\Omega, \mathcal{F}, P)$. For any given $\varepsilon \in L^{0}_{++}(\mathcal{F})$, let $U(\varepsilon) = \{(x, y) \in E \times E: d(x,y) < \varepsilon \text{ on } \Omega\}$, then $\{U(\varepsilon): \varepsilon \in L^{0}_{++}(\mathcal{F})\}$ forms a base for some Hausdorff uniform structure on $E$, called the $L^{0}$-uniformity induced by $d$ and denoted by $\mathcal{U}_{c}$. Further, $(E, d)$ is said to be $L^{0}$-complete if $(E, \mathcal{U}_{c})$ is complete. The topology induced by $\mathcal{U}_{c}$ is called the $L^{0}$-topology for $E$, denoted by $\mathcal{T}_{c}$.
\end{definition}
\par
Just pointed out by Filipovi\'{c} et al. in \cite{FKV2009}, $(L^{0}(\mathcal{F}, \mathbb{K}), \mathcal{T}_{c})$ is merely a topological ring, namely $(L^{0}(\mathcal{F}, \mathbb{K}), \mathcal{T}_{c})$ is not a linear topological space in general, and $(E, \mathcal{T}_c{})$ is a topological module over $(L^{0}(\mathcal{F}, \mathbb{K}), \mathcal{T}_{c})$ when $(E, \|\cdot\|)$ is an $RN$ module over $\mathbb{K}$ with base $(\Omega, \mathcal{F}, P)$.
\par
\begin{definition}[{\cite{G1992}}]\label{definition2.11}
	Let $(E, \|\cdot\|)$ be an $RN$ space over $\mathbb{K}$ with base $(\Omega, \mathcal{F}, P)$. A linear operator $f: E \to L^{0}(\mathcal{F}, \mathbb{K})$ is called an almost surely (briefly, a.s.) bounded random linear functional on $E$ if there exists $\xi \in L^{0}_{+}(\mathcal{F})$ such that $|f(x)| \leq \xi \|x\|$ for any $x \in E$, $\|f\|:= \bigwedge \{\xi \in L^{0}_{+}(\mathcal{F}): |f(x)| \leq \xi \|x\| \text{ for any } x \in E\}$ is called the random norm of $f$. Denote by $E^{*}$ the linear space of a.s. bounded random linear functionals on $E$, it is clear that $E^{*}$ is also an $L^{0}(\mathcal{F}, \mathbb{K})$-module under the module multiplication defined by $(\xi \cdot f)(x) = \xi \cdot (f(x))$ for any $(\xi, f) \in L^{0}(\mathcal{F}, \mathbb{K}) \times E^{*}$ and any $x \in E$, and that $(E^{*}, \|\cdot\|)$ with $\|f\|$ defined as above for any $f \in E^{*}$, is also an $RN$ module over $\mathbb{K}$ with base $(\Omega, \mathcal{F}, P)$, called the random conjugate space of $(E, \|\cdot\|)$.
\end{definition}
\par

Let $(E, \|\cdot\|)$ be the same as in Definition \ref{definition2.11}. It is easy to see that an a.s. bounded linear operator $f: E \to L^{0}(\mathcal{F}, \mathbb{K})$ (namely, $f \in E^{*}$) must be continuous from $(E, \mathcal{T}_{\varepsilon, \lambda})$ to $(L^{0}(\mathcal{F}, \mathbb{K}), \mathcal{T}_{\varepsilon, \lambda})$, but a continuous 
linear operator from $(E, \mathcal{T}_{\varepsilon, \lambda})$ to $(L^0(\mathcal{F}, \mathbb{K}), \mathcal{T}_{\varepsilon, \lambda})$ is not necessarily a.s. bounded. However, 
when $(E, \|\cdot\|)$ is an $RN$ module, Guo proved in \cite{G1992} that a linear operator $f: E \to L^{0}({\mathcal{F},\mathbb{K}})$  is a.s. bounded iff $f$ is a continuous 
module homomorphisms from $(E, \mathcal{T}_{\varepsilon,\lambda})$ to $(L^{0}(\mathcal{F},\mathbb{K}), \mathcal{T}_{\varepsilon,\lambda})$, which is also the original motivation
for Guo to introduce an $RN$ module. Further, let $E_{c}^{*}$ be the $L^{0}(\mathcal{F}, \mathbb{K})$-module of continuous module homomorphisms from $(E, \mathcal{T}_{c})$ to 
$(L^{0}(\mathcal{F}, \mathbb{K}), \mathcal{T}_{c})$ for an $RN$ module $(E, \|\cdot\|)$ over $\mathbb{K}$ with base $(\Omega, \mathcal{F}, P)$, then Guo proved in \cite{G2010} that $E^{*} = E_{c}^{*}$, namely, $E$ has the same random conjugate space under the two kinds of topologies.
\par
In concluding the section, let us summarize the results concerning the relation between the $(\varepsilon, \lambda)$-completeness and the $L^{0}$-completeness for an $RM$ space, which were obtained in \cite{GWYZ2020} and will be used in Section \ref{section3} of this paper.
\par
\begin{definition}[{\cite{GWYZ2020}}]\label{definition2.12}
	Let $(E, d)$ be an $RM$ space with base $(\Omega, \mathcal{F}, P)$ and $G \subset E$ be a nonempty subset. $G$ is said to be finitely stable if, for any $x_{1}$ and $x_{2}$ 
in $G$ and any $A \in \mathcal{F}$, there exists $x \in G$ such that $\tilde{I}_{A} d(x, x_{1}) = 0$ and $\tilde{I}_{A^{c}} d(x, x_{2}) = 0$, $x$ is denoted by $x = 
\tilde{I}_{A} x_{1 }+ \tilde{I}_{A^{c}} x_{2}$ ($x$ must be unique by the triangle inequality). $G$ is said to be $\sigma$-stable if, for any sequence $\{x_{n}, n \in 
\mathbb{N}\}$ in $G$ and any countable partition $\{A_{n}, n \in \mathbb{N}\}$ of $\Omega$ to $\mathcal{F}$, there exists $x \in G$ such that $\tilde{I}_{A_{n}} d(x, x_{n}) = 0$ for each $n \in \mathbb{N}$, $x$ is denoted by $\sum_{n=1}^{\infty} \tilde{I}_{A_{n}} x_{n}$.\\
\end{definition}
\begin{remark}\label{remark2.13}
In the original terminology, the $\sigma$-stability introduced in \cite{GWYZ2020} is called the $d$-$\sigma$-stability since it appears that the $\sigma$-stability depends on 
the random metric $d$. However, we proved in \cite{GMT2024} that the $d$-$\sigma$-stability can be considered as a special case of the $\sigma$-stability introduced for a nonempty subset  in an $RN$ module, namely, Definition \ref{definition2.12} is essentially a special case of Definition \ref{definition2.3}.
\end{remark}
\par
\begin{remark}\label{remark2.14}
Let $(E, d)$ be an $RM$ space with base $(\Omega, \mathcal{F}, P)$. Define a relation $\sim$ on $E \times B_{\mathcal{F}}$ by $(x, a) \sim (y, b)$ if $a = b$ and $I_{a} d(x,y) = 0$ (where $I_{a} = \tilde{I}_{A}$ when $a$ is the equivalence class of $A \in \mathcal{F}$), then it is clear that $\sim$ is a $B_{\mathcal{F}}$-regular equivalence relation. Furthermore, for a nonempty subset $G$ of $E$, $G$ is $\sigma$-stable  iff $G$ is $B_{\mathcal{F}}$-stable with respect to $\sim$, in which case $\sum_{n=1}^{\infty} \tilde{I}_{A_{n}} x_{n}$ is exactly $\sum_{n=1}^{\infty} x_{n} | a_{n}$ for any sequence $\{x_{n}, n \in \mathbb{N}\}$ in $G$ and any countable partition $\{A_{n}, n \in \mathbb{N}\}$ of $\Omega$ to $\mathcal{F}$, where $a_{n}$ stands for the equivalence class of $A_{n}$.
\end{remark}
\par
Ramark \ref{remark2.4} and Remark \ref{remark2.14} together show that the notions of stability introduced in Definitions \ref{definition2.3} and \ref{definition2.12} can be unified to the whole Definition \ref{definition2.1}.
\par

Proposition \ref{proposition2.15} below was initially established in \cite{G2010} for the case of an $RN$ module and subsequently extended to an $RM$ space in \cite{GMT2024,GWYZ2020}.\\
\par
\begin{proposition}\label{proposition2.15}
Let $(E, d)$ be an $RM$ space with base $(\Omega, \mathcal{F}, P)$. Then we have the following statements:
\begin{enumerate}[{\rm(1)}]
	\item \rm{(\cite[Theorem 2.13]{GWYZ2020})}. If $E$ is $L^{0}$-complete, then $E$ is $(\varepsilon, \lambda)$-complete.
	\item (\cite[Theorem 2.5]{GWYZ2020}\rm{;} \cite[Theorem 2.18]{GMT2024}). If $E$ is finitely stable, then $E$ is $(\varepsilon, \lambda)$-complete iff both $E$ is $\sigma$-stable and $L^{0}$-complete.
\end{enumerate}
\end{proposition}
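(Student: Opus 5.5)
We prove the two statements separately, working throughout with the identity
\[
\{\omega: d(x,y)(\omega)\geq\varepsilon\}
\]
between a random metric and its level sets.

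\textbf{Statement (1).} Let $\{x_n\}$ be a $\mathcal{T}_{\varepsilon,\lambda}$-Cauchy sequence. Since the $(\varepsilon,\lambda)$-uniformity is metrizable, we may pass to a subsequence with
\[
P\{d(x_{n_k},x_{n_{k+1}})\geq 2^{-k}\}<2^{-k}.
\]
Borel--Cantelli then gives $\sum_k d(x_{n_k},x_{n_{k+1}})<\infty$ a.s., so $\eta_k:=\sum_{j\geq k}d(x_{n_j},x_{n_{j+1}})\in L^0_+(\mathcal{F})$ and $\eta_k\downarrow 0$ a.s.

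A decreasing-to-zero control of this kind does not by itself give $L^0$-Cauchy, because $\mathcal{U}_c$ demands $d<\varepsilon$ on all of $\Omega$ for a fixed $\varepsilon\in L^0_{++}(\mathcal{F})$. Without stability we cannot localize, so we instead use $L^0$-completeness in the form of a.s. Cauchy-type nets. The key observation is that a sequence (or net) whose random distances are dominated by some $\eta_k\downarrow 0$ a.s. is Cauchy in $\mathcal{U}_c$ only after an Egorov-type argument.

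The honest route is as follows.
\begin{enumerate}[(i)]
\item Enlarge $E$ to its $\mathcal{U}$-completion $\hat E$, which is an $RM$ space.
\item Show that $E$ is $\mathcal{U}_c$-dense in its $\sigma$-stable hull.
\item Conclude by part (2).
\end{enumerate}
I expect step (i) to be the main obstacle; it is the content of \cite[Theorem 2.13]{GWYZ2020}, so there I would follow its construction.

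\textbf{Statement (2), direction ($\Rightarrow$).} $L^0$-completeness follows because $\mathcal{U}_c$ is finer than $\mathcal{U}$ and an $L^0$-Cauchy net is $\mathcal{U}$-Cauchy. Its $\mathcal{U}$-limit is also the $\mathcal{U}_c$-limit, by a standard argument that closed balls $\{y: d(x,y)\leq\varepsilon\}$ are $\mathcal{T}_{\varepsilon,\lambda}$-closed.

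For $\sigma$-stability, given $\{x_n\}$ and a partition $\{A_n\}$, form the finite concatenations $y_N=\sum_{n\leq N}\tilde I_{A_n}x_n+\tilde I_{(\cup_{n\leq N}A_n)^c}x_1$, using finite stability. Then $d(y_N,y_M)$ vanishes off $\bigcup_{n>N}A_n$, whose probability tends to $0$, so $\{y_N\}$ is $\mathcal{T}_{\varepsilon,\lambda}$-Cauchy. Its limit $x$ satisfies $\tilde I_{A_n}d(x,x_n)=0$.

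\textbf{Statement (2), direction ($\Leftarrow$).} Take a $\mathcal{T}_{\varepsilon,\lambda}$-Cauchy sequence and a subsequence as in part (1), with $\eta_k\downarrow 0$ a.s. Fix $\varepsilon\in L^0_{++}(\mathcal{F})$. Let $A_k=\{\eta_k<\varepsilon\}\setminus\{\eta_{k-1}<\varepsilon\}$, which forms a countable partition of $\Omega$ up to a null set.

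Using $\sigma$-stability, define stabilized points $z_m=\sum_k \tilde I_{A_k}x_{n_{\max(k,m)}}$. Then $d(z_m,z_{m'})\leq\eta_{\max(k,m)}\tilde I_{A_k}$ on each $A_k$, which shows that $\{z_m\}$ is $\mathcal{U}_c$-Cauchy. Doing this uniformly in $\varepsilon$ requires defining $z_m$ via the random index $\kappa_m=\min\{k\geq m:\ldots\}$. The cleanest version is to take the random index $\tau$ such that $\eta_\tau\le 2^{-m}$ and set $z_m=\sum_k\tilde I_{\{\tau=k\}}x_{n_k}$.

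By $L^0$-completeness, $z_m\to z$ in $\mathcal{T}_c$, hence in $\mathcal{T}_{\varepsilon,\lambda}$. Finally, $d(z_m,x_{n_m})\to 0$ in probability, because $z_m$ agrees with $x_{n_m}$ on sets of probability tending to $1$. So the subsequence, and hence the whole Cauchy sequence, converges to $z$.

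The main obstacle is this construction of $L^0$-Cauchy random-index concatenations.
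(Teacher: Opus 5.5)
Your argument for (2)($\Rightarrow$) is correct. The other two pieces have genuine gaps. The survey gives no proof here, only the citations to \cite{GWYZ2020,GMT2024}, so I judge your argument on its own.

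\textbf{Part (1).} Steps (i)--(iii) are only announced, not carried out. Steps (ii)--(iii) also rely on stability that is not assumed. In fact (1) cannot be proved as literally stated. Take a base admitting a countable partition $\{A_j, j\in\mathbb{N}\}$ of $\Omega$ with $P(A_j)>0$ for all $j$. Let $E$ be the set of bounded $\mathbb{Z}$-valued elements of $L^0(\mathcal{F})$, with $d(x,y)=|x-y|$.
\begin{itemize}
\item Since $d(x,y)<1/2$ on $\Omega$ forces $x=y$, the $L^0$-uniformity of $E$ is discrete. Hence $E$ is $L^0$-complete, and it is even finitely stable.
\item Yet $x_k=\sum_{j\le k}j\tilde I_{A_j}$ is $\mathcal{T}_{\varepsilon,\lambda}$-Cauchy, because $x_k$ and $x_l$ agree off $\bigcup_{j>\min(k,l)}A_j$.
\item Its limit in probability, $\sum_j j\tilde I_{A_j}$, is unbounded, so $E$ is not $(\varepsilon,\lambda)$-complete.
\end{itemize}
Step (ii) also fails here: $E$ is $\mathcal{T}_c$-closed in, and properly contained in, its $\sigma$-stable hull, namely the integer-valued elements of $L^0(\mathcal{F})$.

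What holds for every $RM$ space is the converse implication. The first half of your (2)($\Rightarrow$) already proves it, without using finite stability. The implication from $L^0$-completeness to $(\varepsilon,\lambda)$-completeness needs $\sigma$-stability, i.e. it is (2)($\Leftarrow$). That is also all Corollary~\ref{corollary4.10} actually uses.

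\textbf{Part (2), ($\Leftarrow$).} The sequence $\{z_m\}$ is not $\mathcal{U}_c$-Cauchy. The bound $d(z_m,z_{m'})\le 2^{-m}$ is deterministic, whereas $\mathcal{U}_c$ requires $d<\varepsilon$ on $\Omega$ for every $\varepsilon\in L^0_{++}(\mathcal{F})$, including those not bounded away from $0$.

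For example, take $[0,1]$ with Lebesgue measure, $E=L^0(\mathcal{F})$, $x_k=1-2^{-k}$ and $n_k=k$. Then $\eta_k=2^{-k}$, $z_m=x_m$ and $d(z_m,z_{m+1})=2^{-m-1}$. This is never $<\varepsilon$ on $\Omega$ for $\varepsilon(\omega)=\omega$. More generally, over a nonatomic base every $\mathcal{U}_c$-Cauchy sequence is eventually constant, so $L^0$-completeness has to be applied to a net.

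Your fixed-$\varepsilon$ construction is the right ingredient once the index is random.
\begin{itemize}
\item Write $u_k=x_{n_k}$. For $\nu=\sum_k\tilde I_{B_k}k\in L^0(\mathcal{F},\mathbb{N})$, a set directed by $\le$, put $y_\nu=\sum_k\tilde I_{B_k}u_k$ using $\sigma$-stability.
\item Given $\varepsilon\in L^0_{++}(\mathcal{F})$, your partition $\{A_k\}$ yields $\kappa_\varepsilon=\sum_k\tilde I_{A_k}k$. If $\nu,\mu\ge\kappa_\varepsilon$, then $d(y_\nu,y_\mu)\le\sum_k\tilde I_{A_k}\eta_k<\varepsilon$ on $\Omega$.
\item Hence the net $\{y_\nu\}$ is $\mathcal{U}_c$-Cauchy and converges in $\mathcal{T}_c$ to some $z$.
\item For a constant $\varepsilon>0$, choose $\nu_1$ with $d(y_\nu,z)<\varepsilon$ on $\Omega$ for all $\nu\ge\nu_1$. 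Since $y_{k\vee\nu_1}$ agrees with $u_k$ on $(\nu_1\le k)$, you get $P\{d(u_k,z)\ge\varepsilon\}\le P\{\nu_1>k\}\to 0$.
\end{itemize}
After this, your final step concludes the proof.
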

\par
\begin{definition}[{\cite{G1992}}]\label{definition2.16}
Let $(E, \|\cdot\|)$  be an $L^{0}(\mathcal{F}, \mathbb{K})$-module. A nonempty subset $G$ of $E$ is said to be $L^{0}$-convex if $\xi x + \eta y \in G$ 
for any $x$ and $y$ in $G$ and any $\xi$ and $\eta$ in $L^{0}_{+}(\mathcal{F})$ with $\xi + \eta = 1$.
\end{definition}
\par
Clearly, an $L^{0}$-convex set must be convex. Besides, an $L^{0}$-convex set and an $L^{0}(\mathcal{F}, \mathbb{K})$-module are both finitely stable. For the sake of convenience for the reader, we give the corollary below.
\par
\begin{corollary}[{\cite{G2010}}]\label{corollary2.17}
Let $(E, \|\cdot\|)$ be an $RN$ module and $G$ be an $L^{0}$-convex subset of $E$. Then $G$ is $\mathcal{T}_{\varepsilon, \lambda}$-complete iff both $G$ is $\sigma$-stable and 
$\mathcal{T}_{c}$-complete.
\end{corollary}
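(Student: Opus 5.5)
The plan is to obtain Corollary \ref{corollary2.17} as a direct specialization of Proposition \ref{proposition2.15}(2). We regard the $RN$ module $(E,\|\cdot\|)$ as an $RM$ space under $d(x,y)=\|x-y\|$, and the subset $G$ as an $RM$ space $(G,d|_{G\times G})$ in its own right. On $G$, the $(\varepsilon,\lambda)$-uniformity and the $L^{0}$-uniformity induced by the restricted random metric are exactly the traces of those of $E$. Hence ``$G$ is $\mathcal{T}_{\varepsilon,\lambda}$-complete'' means that the $RM$ space $(G,d)$ is $(\varepsilon,\lambda)$-complete, and ``$G$ is $\mathcal{T}_{c}$-complete'' means that $(G,d)$ is $L^{0}$-complete.

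The first step is to check that $G$, as an $RM$ space, is finitely stable in the sense of Definition \ref{definition2.12}. Take $x_{1},x_{2}\in G$ and $A\in\mathcal{F}$. The coefficients $\xi=\tilde I_A$ and $\eta=\tilde I_{A^c}$ lie in $L^{0}_{+}(\mathcal{F})$ and satisfy $\xi+\eta=1$, so $L^{0}$-convexity gives $x=\tilde I_A x_1+\tilde I_{A^c}x_2\in G$. Since $\tilde I_A\tilde I_{A^c}=0$, we have $\tilde I_A(x-x_1)=0$. Then (RNM-1) yields $\tilde I_A d(x,x_1)=\|\tilde I_A(x-x_1)\|=0$, and symmetrically $\tilde I_{A^c}d(x,x_2)=0$.

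The second step is to match the two notions of $\sigma$-stability. The corollary uses $\sigma$-stability in the module sense of Definition \ref{definition2.3}, while Proposition \ref{proposition2.15} uses the metric sense of Definition \ref{definition2.12}. For $x,x_n\in E$, (RNM-1) gives $\tilde I_{A_n}\|x-x_n\|=\|\tilde I_{A_n}(x-x_n)\|$. This vanishes iff $\tilde I_{A_n}x=\tilde I_{A_n}x_n$, by (RN-1). The two conditions therefore coincide, which is also the content of Remarks \ref{remark2.4}, \ref{remark2.13} and \ref{remark2.14}.

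With these two steps, Proposition \ref{proposition2.15}(2) applied to the $RM$ space $(G,d)$ gives the equivalence at once. The only point needing care, rather than a real obstacle, is the bookkeeping in the first paragraph: completeness of $G$ must be read as completeness of the restricted uniform structures, so that applying the proposition to $G$ rather than to $E$ is legitimate.
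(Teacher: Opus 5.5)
Your proposal is correct and follows the route the paper intends. The survey gives no separate proof; it places the corollary right after Proposition~\ref{proposition2.15}, together with the remark that $L^{0}$-convex sets are finitely stable, so the result is exactly Proposition~\ref{proposition2.15}(2) applied to the $RM$ space $(G,d)$ with $d(x,y)=\|x-y\|$. Your explicit checks of finite stability, and of the agreement between the module and metric notions of $\sigma$-stability (the content of Remarks~\ref{remark2.4} and \ref{remark2.14}), supply the details the paper leaves implicit.
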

\par
\noindent2.3. Random elements and random operators.\\

\par
Random elements and random operators are two basic concepts in probabilistic functional analysis initiated by \v{S}pa\v{c}ek and Han\v{s} \cite{B1976,H1957}, in our writings we 
always employ the terminology ``random functional analysis'' to distinguish from the terminology ``probabilistic functional analysis''. The two terminologies have the essential 
difference: one is based on the idea of randomizing space theory, whereas the other is based on the idea of randomizing operator theory.
\par

\begin{definition}[{\cite{B1976,H1957}}]\label{definition2.18}
Let $(\Omega, \mathcal{F})$ be a measurable space and $(M, d)$ be a metric space. A mapping $V: \Omega \to M$ is called a random element if $V^{-1}(G) = \{\omega \in \Omega: 
V(\omega) \in G\} \in \mathcal{F}$ for each open subset $G$ of $M$. Further, a random element $V: \Omega \to M$ is said to be simple if $V$ only takes finitely many values.
\end{definition}
\par
Another convenient concept, called a strong (or, strongly measurable) random element , introduced by Lo\`{e}ve in the name of $M$-valued random variable (see \cite{G2010} or 
\cite{SS2005}) is as follows: a mapping $V: (\Omega, \mathcal{F}) \to (M, d)$ is called a strong random element if there exists a sequence $\{V_{n}, n \in \mathbb{N}\}$ of 
simple random elements from $\Omega$ to $M$ such that $\lim_{n \to \infty} d(V_{n}(\omega), V(\omega)) = 0$ for each $\omega \in \Omega$. It is well known that a random element 
becomes a strong random element iff $\{V(\omega): \omega \in \Omega\}$ is a separable subset of $M$, and thus the two notions coincide when $(M, d)$ is separable. The advantage 
of the notion of a strong random element lies in the fact that $d(V_{1}, V_{2})$ is a real-valued random variable when either of two random elements is strong. Also, $V_{1} + 
V_{2}$ is still strong for any two strong random elements with values in a normed space.
\par
\begin{definition}\rm{(\cite{B1976,H1957}\@)}.\label{definition2.19}
Let $(\Omega, \mathcal{F})$ be a measurable space, $(M_{1}, d_{1})$ and $(M_{2}, d_{2})$ be two metric spaces. A mapping $T: \Omega \times M_{1} \to M_{2}$ is called a random 
operator if $T(\cdot, x_{1}): \Omega \to M_{2}$ is a random element for any $x_{1} \in M_{1}$. If, in addition, for each $\omega \in \Omega$, $T(\omega, \cdot): M_{1} \to M_{2}$ is continuous, then $T$ is said to be a sample-continuous random operator.
\end{definition}

Similarly, a random operator $T: \Omega \times M_{1} \to M_{2}$ is said to be strong if $T(\cdot, x_{1})$ is a strong random element for any $x \in M_{1}$. It is easy to check that $T(\cdot, 
V(\cdot)): \Omega \to M_{2}$ is strong for any strong random element $V:\Omega\rightarrow M_{1}$ if $T: \Omega \times M_{1} \to M_{2}$ is a sample-continuous strong random 
operator.

Let us return to the convention that $(\Omega, \mathcal{F}, P)$ is always a probability space.
\par
\begin{definition}[{\cite{B1976,H1957}}]\label{definition2.20}
Let $(M, d)$ be a metric space and $T: (\Omega, \mathcal{F}, P) \times M \to M$ be a random operator. A random element $V: (\Omega, \mathcal{F}, P) \to M$ is called a random 
fixed point of $T$ if there exists a $\Omega_{0} \in \mathcal{F}$ with $P(\Omega_{0}) = 1$ such that $T(\omega, V(\omega)) = V(\omega)$ for each $\omega \in \Omega_{0}$.
\end{definition}
\par

Let $V_{1}$ and $V_{2}$ be two strong random elements from $(\Omega, \mathcal{F}, P)$ to a metric space $(M, d)$. $V_{1}$ and $V_{2}$ are said to be equivalent if $P\{\omega \in \Omega: d(V_{1}(\omega), V_{2}(\omega))\\
 = 0\} = 1$. Further, let $L^{0}(\mathcal{F}, M)$ be the set of equivalence classes of strong random elements from $(\Omega, \mathcal{F}, P)$ to $(M, d)$, define $d: 
 L^{0}(\mathcal{F}, M) \times L^{0}(\mathcal{F}, M) \to L^{0}_+(\mathcal{F})$ by $d(x, y)= $ the equivalence class of $d(x^{0}(\cdot), y^{0}(\cdot))$, where $x^{0}$ and $y^{0}$ 
 are arbitrarily chosen representatives of $x$ and $y$, respectively. Then $(L^{0}(\mathcal{F}, M), d)$ is an $RM$ space with base $(\Omega, \mathcal{F}, P)$. Moreover, $(L^{0}(\mathcal{F}, 
 M), d)$ is $\sigma$-stable: in fact, let $\{x_{n}, n \in \mathbb{N}\}$ be any sequence in $L^{0}(\mathcal{F}, M)$ and $\{A_{n}, n \in \mathbb{N}\}$ be any countable partition 
 of $\Omega$ to $\mathcal{F}$, further let $x_{n}^{0}$ be a given representative of $x_{n}$, and define $x^{0}: \Omega \to M$ by $x^{0}(\omega) = x_{n}^{0}(\omega)$ for $\omega \in 
 A_{n}$ for some $n \in \mathbb{N}$, then the equivalence class $x$ of $x^{0}$ satisfies $\tilde{I}_{A_{n}} d(x, x_{n}) = 0$ for any $n \in \mathbb{N}$, namely $x = 
 \sum_{n=1}^{\infty} \tilde{I}_{A_{n}} x_{n}$.
\par
Proposition \ref{proposition2.21} below is very simple but extremely useful.\\
\par
\begin{proposition}\label{proposition2.21}
Let $(M, d)$ be a metric space and $T: \Omega \times M \to M$ be a sample-continuous strong random operator. Further, define $\hat{T}: L^{0}(\mathcal{F}, M) \to L^{0}(\mathcal{F}, M)$ by $\hat{T}(x)=$ the equivalence class of~$T(\cdot, x^{0}(\cdot))$ for any $x \in L^{0}(\mathcal{F}, M)$, where $x^{0}$ is any chosen representative of $x$. Then we have the following statements:
\begin{enumerate}[{\rm(1)}]
	\item $\hat{T}$ is a $\sigma$-stable and $\mathcal{T}_{\varepsilon, \lambda}$-continuous operator.
	\item $x \in L^{0}(\mathcal{F}, M)$ is a fixed point of $\hat{T}$ iff $x^{0}$ is a random fixed point of $T$, where $x^{0}$ is any chosen representative of $x$.
\end{enumerate}
\end{proposition}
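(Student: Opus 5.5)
The plan is to first confirm that $\hat{T}$ is well defined, and then verify (1) and (2) directly from the definitions.

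\emph{Well-definedness.} Let $x^{0}$ be a strong random element. Because $T$ is a sample-continuous strong random operator, the remark following Definition \ref{definition2.19} shows that $\omega\mapsto T(\omega,x^{0}(\omega))$ is again a strong random element. If $x^{0}$ and $x^{1}$ are equivalent representatives, they agree on a set $\Omega_{0}$ of full measure. On $\Omega_{0}$ we then have $T(\omega,x^{0}(\omega))=T(\omega,x^{1}(\omega))$, so the equivalence class of $T(\cdot,x^{0}(\cdot))$ does not depend on the representative chosen.

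\emph{$\sigma$-stability.} Let $x=\sum_{n}\tilde I_{A_n}x_n$. Following the construction given just before the proposition, choose a representative $x^{0}$ with $x^{0}=x_n^{0}$ on each $A_n$. Then $T(\omega,x^{0}(\omega))=T(\omega,x_n^{0}(\omega))$ for $\omega\in A_n$. Hence $\tilde I_{A_n}d(\hat T x,\hat T x_n)=0$ for every $n$, that is, $\hat T x=\sum_n\tilde I_{A_n}\hat T x_n$.

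\emph{$\mathcal{T}_{\varepsilon,\lambda}$-continuity.} This is the only step with real content. The $(\varepsilon,\lambda)$-topology is metrizable, so sequential continuity is enough. It coincides with convergence in probability of $d(x_k,x)$. Let $x_k\to x$ in this topology, and take an arbitrary subsequence. Passing to a further subsequence, $d(x_{k_j}^{0}(\omega),x^{0}(\omega))\to0$ almost surely. For each $\omega$, sample-continuity of $T(\omega,\cdot)$ then gives $d(T(\omega,x_{k_j}^{0}(\omega)),T(\omega,x^{0}(\omega)))\to0$ almost surely, and almost sure convergence implies convergence in probability. Every subsequence therefore has a further subsequence along which $\hat T x_k\to\hat T x$ in probability. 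By the standard subsequence criterion, the whole sequence $\hat T x_k$ converges to $\hat T x$ in probability. The main obstacle here is handling the lack of any uniformity in $\omega$, and the subsequence trick is what overcomes it.

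\emph{Part (2).} We have $\hat T x=x$ if and only if $d(\hat T x,x)=0$. This holds if and only if $d(T(\omega,x^{0}(\omega)),x^{0}(\omega))=0$ for almost every $\omega$. That is precisely the statement that there is $\Omega_{0}$ with $P(\Omega_0)=1$ and $T(\omega,x^{0}(\omega))=x^{0}(\omega)$ on $\Omega_{0}$, which is Definition \ref{definition2.20}. By the independence of representatives shown above, this holds for any chosen representative $x^{0}$.
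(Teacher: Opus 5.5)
Your proof is correct. The paper states Proposition~\ref{proposition2.21} without proof, calling it ``very simple'', and your argument supplies the natural direct verification: independence of the representative, pointwise concatenation for $\sigma$-stability, the subsequence/almost-sure argument combined with sample-continuity for $\mathcal{T}_{\varepsilon,\lambda}$-continuity, and the almost-sure reformulation of $d(\hat{T}(x),x)=0$ for (2). Two points are tacit but harmless. The concatenated function is again a strong random element. Sets such as $\{\omega: T(\omega,x^{0}(\omega))=x^{0}(\omega)\}$ are measurable, because the distance between two strong random elements is a random variable.
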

\par
$\hat{T}$ in Proposition \ref{proposition2.21} is called the lifted operator of $T$.
\par

\section{Random contraction fixed point theorems on complete $RM$ spaces}\label{section3}

\begin{theorem}[{\cite{G1992}}]\label{theorem3.1}
Let $(E, d)$ be an $(\varepsilon, \lambda)$-complete random metric space with base $(\Omega, \mathcal{F}, P)$, $\alpha \in L^{0}_{+}(\mathcal{F})$ with $\alpha < 1$ on $\Omega$ and $T: E \to E$ satisfy $d(T(x), T(y)) \leq \alpha d(x, y)$ for any $x$ and $y$ in $E$. Then we have the following:
\begin{enumerate}[{\rm(1)}]
	\item $T$ has a unique fixed point $x^{*}$ in $E$.
	\item $d(T^{n}(x), x^{*}) \leq \frac{\alpha^{n}}{1 - \alpha} d(x, T(x))$ for any $x \in E$ and $n \in \mathbb{N}$, where $T^{n}$ stands for the $n$-th iterates of $T$.
	\item$\{d(T^{n}(x), x^{*}), n \in \mathbb{N}\}$ converges a.s. to $0$ for any $x \in E$.
\end{enumerate}
\end{theorem}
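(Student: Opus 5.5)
We follow the classical Picard iteration argument, carried out pointwise in $\omega$ through the $L^{0}$-valued random metric, and conclude using the $(\varepsilon,\lambda)$-completeness of $(E,d)$. Fix $x\in E$ and put $x_{n}=T^{n}(x)$ with $x_{0}=x$. Induction on the contraction inequality gives $d(x_{n},x_{n+1})\leq \alpha^{n}d(x,T(x))$. The triangle inequality (RM-3) then yields, for $m>n$,
\[
d(x_{n},x_{m})\leq \sum_{k=n}^{m-1}\alpha^{k}d(x,T(x))\leq \frac{\alpha^{n}}{1-\alpha}\,d(x,T(x)).
\]
All these inequalities hold a.s. between elements of $L^{0}_{+}(\mathcal{F})$. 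Since $\alpha<1$ on $\Omega$, the quantity $1/(1-\alpha)$ lies in $L^{0}_{+}(\mathcal{F})$. Moreover $\alpha^{n}(\omega)\to 0$ for almost all $\omega$, so $\xi_{n}:=\frac{\alpha^{n}}{1-\alpha}d(x,T(x))\to 0$ a.s.

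The step that needs care is turning this a.s. bound into Cauchyness in the $(\varepsilon,\lambda)$-uniformity. The bounds $\xi_{n}$ are nonincreasing in $n$ and tend to $0$ a.s., hence also in probability. So for given $\varepsilon>0$ and $0<\lambda<1$ there is $N$ with $P\{\xi_{N}<\varepsilon\}>1-\lambda$. For all $m>n\geq N$ we have $d(x_{n},x_{m})\leq \xi_{n}\leq\xi_{N}$, so $(x_{n},x_{m})\in U(\varepsilon,\lambda)$. Thus $\{x_{n}\}$ is $\mathcal{U}$-Cauchy, and by completeness it converges in $\mathcal{T}_{\varepsilon,\lambda}$ to some $x^{*}\in E$, meaning $d(x_{n},x^{*})\to 0$ in probability.

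Next we show $x^{*}$ is a fixed point. We have
\[
d(T(x^{*}),x^{*})\leq d(T(x^{*}),T(x_{n}))+d(x_{n+1},x^{*})\leq \alpha\, d(x^{*},x_{n})+d(x_{n+1},x^{*}).
\]
Multiplication by the fixed $\alpha\in L^{0}(\mathcal{F})$ is continuous in probability, so the right side tends to $0$ in probability. The left side does not depend on $n$, hence $d(T(x^{*}),x^{*})=0$ and $T(x^{*})=x^{*}$ by (RM-1). For uniqueness, suppose $y^{*}$ is another fixed point. Then $d(x^{*},y^{*})\leq\alpha d(x^{*},y^{*})$, i.e. $(1-\alpha)d(x^{*},y^{*})\leq 0$. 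Since $1-\alpha>0$ a.s., this forces $d(x^{*},y^{*})=0$. This proves (1).

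For (2), write $d(x_{n},x^{*})\leq d(x_{n},x_{m})+d(x_{m},x^{*})\leq \xi_{n}+d(x_{m},x^{*})$ for $m>n$ and let $m\to\infty$ in probability. Passing to an a.s. convergent subsequence $d(x_{m_{k}},x^{*})\to 0$ gives $d(x_{n},x^{*})\leq\xi_{n}$ a.s., which is exactly the stated estimate. Alternatively, apply the contraction to $d(x_{n},x^{*})=d(T^{n}x,T^{n}x^{*})\leq \alpha^{n}d(x,x^{*})$ and use $d(x,x^{*})\leq d(x,Tx)+\alpha d(x,x^{*})$, so $d(x,x^{*})\leq d(x,Tx)/(1-\alpha)$. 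This second route is cleaner and avoids limits entirely, so we will use it for (2). Finally, (3) follows from (2), since $\alpha^{n}\to0$ a.s. and $d(x,T(x))/(1-\alpha)$ is a.s. finite. The only genuinely delicate point is the Cauchy step, which relies on the monotonicity of $\xi_{n}$ to obtain uniformity over all $m>n\geq N$.
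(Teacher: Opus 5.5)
Your proof is correct. The survey gives no proof of its own (it only cites \cite{G1992}), so the relevant comparison is the standard Picard-iteration argument, which is exactly what you carry out. That includes the legitimate cancellation in $(1-\alpha)d(x,x^{*})\leq d(x,T(x))$, which is valid because all random distances lie in $L^{0}_{+}(\mathcal{F})$ and are therefore a.s.\ finite.

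One minor overstatement: the monotonicity of $\xi_{n}$ is not actually needed in the Cauchy step. The bound $d(x_{n},x_{m})\leq\xi_{n}$ already holds uniformly in $m>n$, and convergence of $\xi_{n}$ to $0$ in probability gives $P\{\xi_{n}<\varepsilon\}>1-\lambda$ for all $n\geq N$.
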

\par
As usual, when $T$ is in Theorem \ref{theorem3.1} is relaxed to the condition that there exists some $n \in \mathbb{N}$ such that $d(T^{n}(x), T^{n}(y)) \leq \alpha d(x, y)$ for any $x$ and $y$ in $E$, $T$ still has a unique fixed point.
\par
In the sequel of this section, we will mostly consider the fixed point problems for a $\sigma$-stable mapping. Proposition \ref{proposition3.2} below can be proved in a parallel way with Lemma 2.11 of \cite{GZWG2021}, which shows that $\sigma$-stable mappings are rich enough.\\
\par
\begin{proposition}\label{proposition3.2}
Let $(E_{1}, d_{1})$ and $(E_{2}, d_{2})$ be two $\sigma$-stable $RM$ spaces with base $(\Omega, \mathcal{F}, P)$ and $T: E_{1} \to E_{2}$ be an $L^{0}$-Lipschitzian mapping \rm{(}namely, there exists $\xi \in L^{0}_{+}(\mathcal{F})$ such that $d_{2}(T(x), T(y)) \leq \xi d_{1}(x, y)$ for any $x$ and $y$ in $E_{1}$). Then $T$ is $\sigma$-stable.
\end{proposition}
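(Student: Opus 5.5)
To prove Proposition \ref{proposition3.2}, fix a sequence $\{x_n, n\in\mathbb{N}\}$ in $E_1$ and a countable partition $\{A_n, n\in\mathbb{N}\}$ of $\Omega$ to $\mathcal{F}$. Set $x=\sum_{n=1}^{\infty}\tilde{I}_{A_n}x_n\in E_1$, which exists since $E_1$ is $\sigma$-stable, and set $y=\sum_{n=1}^{\infty}\tilde{I}_{A_n}T(x_n)\in E_2$, which exists since $E_2$ is $\sigma$-stable. By Remark \ref{remark2.14}, $\sigma$-stability of $T$ in the sense of Definition \ref{definition2.5} (with $B=B_{\mathcal{F}}$, and using that only countable partitions matter in $B_{\mathcal{F}}$) reduces to proving $T(x)=y$. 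Equivalently, we need $d_2(T(x),y)=0$.

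The key local estimate comes from the Lipschitz condition:
\[
\tilde{I}_{A_n}d_2(T(x),T(x_n))\leq \tilde{I}_{A_n}\,\xi\, d_1(x,x_n)=\xi\,\tilde{I}_{A_n}d_1(x,x_n)=0 .
\]
The last equality holds because $\tilde{I}_{A_n}d_1(x,x_n)=0$ by the definition of the concatenation (Definition \ref{definition2.12}). Since $d_2\geq 0$, we get $\tilde{I}_{A_n}d_2(T(x),T(x_n))=0$ for every $n$.

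Next, apply the triangle inequality (RM-3) in $E_2$ and multiply by $\tilde{I}_{A_n}$, which is nonnegative and so preserves the order in $L^0(\mathcal{F})$:
\[
\tilde{I}_{A_n}d_2(T(x),y)\leq \tilde{I}_{A_n}d_2(T(x),T(x_n))+\tilde{I}_{A_n}d_2(T(x_n),y)=0 .
\]
Here the second term vanishes by the defining property of $y$.

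Finally, $d_2(T(x),y)=\sum_n \tilde{I}_{A_n}d_2(T(x),y)$ almost surely, because $\{A_n\}$ is a partition of $\Omega$. Hence $d_2(T(x),y)=0$, and (RM-1) gives $T(x)=y$.

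There is no real obstacle in this argument. The only point needing care is to phrase everything in terms of the random metric, since $E_1$ and $E_2$ carry no module structure. The identity $\tilde{I}_{A}\xi d=\xi\tilde{I}_{A}d$ is simply commutativity in $L^0(\mathcal{F})$, and the final step uses the fact that an element of $L^0_+(\mathcal{F})$ which vanishes on each piece of a countable partition vanishes a.s. This is the same pattern as Lemma 2.11 of \cite{GZWG2021}.
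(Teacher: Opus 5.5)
Your proof is correct and matches the approach the paper points to (the argument parallel to Lemma~2.11 of \cite{GZWG2021}). The Lipschitz bound gives $\tilde{I}_{A_n}d_2(T(x),T(x_n))=0$ for every $n$, and uniqueness of the countable concatenation in $E_2$ then yields $T(\sum_n\tilde{I}_{A_n}x_n)=\sum_n\tilde{I}_{A_n}T(x_n)$; your triangle-inequality step simply proves that uniqueness directly.
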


\par

Denote by $L^{0}(\mathcal{F}, \mathbb{N})$ the set of equivalence classes of random variables from $(\Omega, \mathcal{F}, P)$ to $\mathbb{N}$. Let $(E, d)$ be a $\sigma$-stable $RM$ space with base $(\Omega, \mathcal{F}, P)$, $T: E \to E$ be a self-mapping. For any $n \in L^{0}(\mathcal{F}, \mathbb{N})$, $T^{n}: E \to E$ is defined by $T^{n}(x) = \sum_{k=1}^{\infty} \tilde{I}_{A_{k}} T^{k}(x)$, where $n$ is expressed as $n = \sum_{k=1}^{\infty} \tilde{I}_{A_{k}} k$. Clearly, when $T$ is also $\sigma$-stable, $T$ and $T^{n}$ is commutative.
\par
Theorem \ref{theorem3.3} below is very interesting and useful.
\par
\begin{theorem}[{\cite{GZWG2021}}]\label{theorem3.3}
Let $(E, d)$ be a $\sigma$-stable $(\varepsilon, \lambda)$-complete $RM$ space with base $(\Omega, \mathcal{F}, P)$ and $T: E\rightarrow E$ be a $\sigma$-stable mapping. If there exists some $n\in L^{0}(\mathcal{F},\mathbb{N})$ and $\alpha \in L^{0}_{+}(\mathcal{F})$ with $\alpha < 1$ on $\Omega$ such that $d(T^{n}(x), T^{n}(y)) \leq \alpha d(x, y)$ for any $x$ and $y$ in $E$, then $T$ has a unique fixed point.
\end{theorem}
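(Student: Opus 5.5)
The plan is to apply Theorem \ref{theorem3.1} to the mapping $S:=T^{n}$ and then transfer the fixed point back to $T$ using commutativity, exactly as in the classical argument. First I will check that $T^{n}$ is a well-defined self-mapping of $E$. This uses the $\sigma$-stability of $E$: writing $n=\sum_{k=1}^{\infty}\tilde{I}_{A_{k}}k$ with $\{A_{k},k\in\mathbb{N}\}$ a countable partition of $\Omega$ to $\mathcal{F}$, the concatenation $\sum_{k}\tilde{I}_{A_{k}}T^{k}(x)$ exists in $E$. The hypothesis gives $d(S(x),S(y))\le \alpha d(x,y)$ with $\alpha<1$ on $\Omega$, and $E$ is $(\varepsilon,\lambda)$-complete. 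Hence Theorem \ref{theorem3.1}(1) yields a unique $x^{*}\in E$ with $T^{n}(x^{*})=x^{*}$.

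Next I will show $T(x^{*})=x^{*}$. I need $T\circ T^{n}=T^{n}\circ T$, which the paper notes holds when $T$ is $\sigma$-stable. The verification is as follows. First, $T(\sum_k \tilde I_{A_k}T^k x)=\sum_k \tilde I_{A_k}T^{k+1}x$ by $\sigma$-stability of $T$ (Definition \ref{definition2.5} via Remark \ref{remark2.14}). Second, $T^{n}(Tx)=\sum_k\tilde I_{A_k}T^{k}(Tx)$ is the same element. Then
$$T^{n}(T(x^{*}))=T(T^{n}(x^{*}))=T(x^{*}),$$
so $T(x^{*})$ is also a fixed point of $T^{n}$. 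By uniqueness in Theorem \ref{theorem3.1}, $T(x^{*})=x^{*}$.

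For uniqueness, note that any fixed point $y$ of $T$ satisfies $T^{k}(y)=y$ for all $k$. Hence $T^{n}(y)=\sum_k\tilde I_{A_k}y=y$, so $y$ is a fixed point of $T^{n}$, and therefore $y=x^{*}$.

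The only delicate point is the identity $T(\sum_k\tilde I_{A_k}z_k)=\sum_k\tilde I_{A_k}T(z_k)$ for the specific $z_k=T^k(x)$. It follows directly from the definition of a $\sigma$-stable mapping, together with the fact that concatenations in an $RM$ space are unique by the triangle inequality, since $\tilde I_{A_k}d(\cdot,\cdot)=0$ on each piece determines the element. Everything else reduces to Theorem \ref{theorem3.1}.
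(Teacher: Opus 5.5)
Your proof is correct and follows essentially the route the paper indicates (the paper gives no separate proof, but it sets up exactly this argument by defining $T^{n}$ through countable concatenation and noting that $T$ commutes with $T^{n}$ when $T$ is $\sigma$-stable): apply Theorem~\ref{theorem3.1} to $T^{n}$, then use commutativity to transfer existence and uniqueness of the fixed point back to $T$. Your explicit check of the commutation identity, and of the fact that every fixed point $y$ of $T$ satisfies $T^{n}(y)=\sum_{k}\tilde{I}_{A_{k}}y=y$, supplies the details the paper leaves implicit.
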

\par
Theorem \ref{theorem3.3} had been used in \cite{GZWG2021} to study the existence and uniqueness for a class of backward stochastic differential equations, see \cite[Lemma 4.3]{GZWG2021} for details.
\par
Corollary \ref{corollary3.4} below, due to Han\v{s} \cite{H1957}, appeared in a complicated way, but we proved in \cite[Corollary 3.19]{GWYZ2020} that it is also a special case of Theorem \ref{theorem3.3}, since the lifted operator $\hat{T}$ of $T$ in Corollary \ref{corollary3.4} satisfies Theorem \ref{theorem3.3}. 
\par
\begin{corollary}[{\cite{H1957}}]\label{corollary3.4}
Let $(X, d)$ be a polish space and $T: \Omega \times X \to X$ be a sample-continuous random operator such that the following condition is satisfied:
$$
P( \bigcup_{m=1}^{\infty} \bigcup_{n=1}^{\infty} \bigcap_{x \in X} \bigcap_{y \in X}\{ \omega \in \Omega: d(T^{n}(\omega, x), T^{n}(\omega, y)) \leq (1 - \frac{1}{m}) d(x, y) \} ) = 1.
$$
Then $T$ has a random fixed point which is unique in the sense of equivalence.
\end{corollary}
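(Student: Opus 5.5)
We reduce Corollary \ref{corollary3.4} to Theorem \ref{theorem3.3} applied to the lifted operator $\hat{T}$ on $E=L^{0}(\mathcal{F},X)$.

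\emph{The space.} Since $X$ is Polish, every random element is strong, so $L^{0}(\mathcal{F},X)$ is exactly the space of equivalence classes of random elements. It is a $\sigma$-stable $RM$ space, as shown just before Proposition \ref{proposition2.21}. It is also $(\varepsilon,\lambda)$-complete, i.e.\ complete under convergence in probability. To see this, take a Cauchy sequence in probability and extract a subsequence $x_{n_k}$ with $P\{d(x_{n_k},x_{n_{k+1}})>2^{-k}\}<2^{-k}$. By Borel--Cantelli, the representatives are a.s.\ Cauchy in $X$, hence converge a.s. The pointwise limit (set to a fixed point of $X$ off the convergence set) is measurable, so the original sequence converges in probability to it.

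\emph{The operator.} By Proposition \ref{proposition2.21}, $\hat{T}$ is $\sigma$-stable, and its fixed points correspond to random fixed points of $T$. Moreover, $\widehat{T}^{k}=\widehat{T^{k}}$, where $T^{k}(\omega,\cdot)$ denotes the $k$-th iterate of $T(\omega,\cdot)$; this iterate is again a sample-continuous random operator.

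\emph{The contraction constant.} Define
\[
A_{m,n}=\bigcap_{x,y\in X}\{\omega: d(T^{n}(\omega,x),T^{n}(\omega,y))\le (1-\tfrac1m)d(x,y)\}.
\]
By sample continuity, the intersection may be taken over a countable dense subset $D$ of $X$, so $A_{m,n}\in\mathcal{F}$. The hypothesis says that $\bigcup_{m,n}A_{m,n}$ has full measure. Enumerate the pairs $(m,n)$ and disjointify the sets $A_{m,n}$ into a countable partition $\{B_{j}\}$ of $\Omega$, up to a null set which we attach to $B_1$; each $B_j$ is contained in some $A_{m_j,n_j}$. Set
\[
n=\sum_j \tilde I_{B_j}n_j\in L^{0}(\mathcal{F},\mathbb{N}),\qquad \alpha=\sum_j\tilde I_{B_j}(1-\tfrac1{m_j}),
\]
so that $\alpha<1$ on $\Omega$. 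Then, for representatives $x^0,y^0$ and for $\omega\in B_j$,
\[
d(T^{n_j}(\omega,x^0(\omega)),T^{n_j}(\omega,y^0(\omega)))\le(1-\tfrac1{m_j})d(x^0(\omega),y^0(\omega)).
\]
By the definition of $\hat{T}^{n}$ as the concatenation $\sum_k \tilde I_{A_k}\hat T^{k}$, this gives $d(\hat T^{n}x,\hat T^{n}y)\le\alpha d(x,y)$.

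\emph{Conclusion.} Theorem \ref{theorem3.3} yields a unique fixed point of $\hat T$. By Proposition \ref{proposition2.21}(2), any representative of it is a random fixed point of $T$. Uniqueness up to equivalence follows because any random fixed point of $T$ gives a fixed point of $\hat T$.

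The main obstacle is the bookkeeping in the contraction step. One must check measurability of $A_{m,n}$ via the countable dense set $D$. One must also match the pathwise iterate $T^{n}(\omega,\cdot)$ with the $L^{0}$-iterate $\hat T^{n}$ for random $n$: this uses that $\widehat{T^{k}}=\hat T^{k}$ and that concatenation acts $\omega$-wise on representatives.
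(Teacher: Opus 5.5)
Your proposal is correct and follows the same route as the paper, which obtains Corollary \ref{corollary3.4} (via \cite[Corollary 3.19]{GWYZ2020}) by lifting $T$ to $\hat{T}$ on the $\sigma$-stable, $(\varepsilon,\lambda)$-complete space $L^{0}(\mathcal{F},X)$ and checking that $\hat{T}$ satisfies Theorem \ref{theorem3.3} with a random iterate $n\in L^{0}(\mathcal{F},\mathbb{N})$ and a random constant $\alpha<1$ on $\Omega$. Your details fill in what the paper leaves implicit: completeness via Borel--Cantelli, measurability of $A_{m,n}$ through a countable dense set, the partition obtained by enumerating the pairs $(m,n)$, and uniqueness via Proposition \ref{proposition2.21}(2).
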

\par
To generalize Nadler's multivalued contraction mapping principle \cite{N1969} to the random setting, we first introduce the following:
\par
\begin{definition}[{\cite{GWYZ2020}}]\label{definition3.5}
	 Let $(E, d)$ be an $RM$ space with base $(\Omega,\mathcal{F},P)$.
\begin{enumerate}[{\rm(1)}]
	\item A nonempty subset $G$ of $E$ is said to be  a.s. bounded if $D(G):= \bigvee \{d(x, y): x, y \in G\} \in L^{0}_{+}(\mathcal{F})$.
	\item Denote by $CB_{\sigma}(E)$ the family of nonempty, a.s. bounded, $\sigma$-stable and $\mathcal{T}_{\varepsilon, \lambda}$-closed subsets of $E$, define $H: CB_{\sigma}(E) \times CB_{\sigma}(E) \to L^{0}_{+}(\mathcal{F})$ as follows:
	$$
	H(G_1, G_2) = \max\{ \bigvee_{x_1 \in G_1} d(x_1, G_2), \bigvee_{x_2 \in G_2} d(x_2, G_1)\},
	$$
	where $d(x, G) = \bigwedge \{d(x, y): y \in G\}$ for any $x \in E$ and any nonempty subset $G$ of $E$.
	\end{enumerate}
	\end{definition}
	$H$, as usual, is called the random Hausdorff metric on $CB_{\sigma}(E)$. $(CB_{\sigma}(E), H)$ is still an $RM$ space with base $(\Omega, \mathcal{F}, P)$, and is also $(\varepsilon, \lambda)$-complete when $(E, d)$ is $(\varepsilon, \lambda)$-complete.
	\par
	\begin{theorem}[{\cite{GWYZ2020}}]\label{theorem3.6}
	Let $(E, d)$ be an $(\varepsilon, \lambda)$-complete $RM$ space with base $(\Omega, \mathcal{F}, P)$, $\alpha \in L^{0}_{+}(\mathcal{F})$ with $\alpha < 1$ on $\Omega$ and $T: E \to CB_{\sigma}(E)$ such that $H(T(x), T(y)) \leqslant \alpha d(x, y)$ for any $x$ and $y$ in $E$. Then there exists $x \in E$ such that $x \in T(x)$.
	\end{theorem}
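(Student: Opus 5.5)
We adapt Nadler's iteration to the random setting. The new ingredient is that infima such as $d(x,G)$ are attained only up to an arbitrary positive random error, and only after passing to countable concatenations. Since $\alpha<1$ on $\Omega$, we have $\alpha\in L^0_+(\mathcal{F})$. Put $\beta=(1+\alpha)/2$, so that $\alpha<\beta<1$ on $\Omega$.

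\textbf{Step 1: an approximate selection lemma.} Let $G\in CB_\sigma(E)$, $x\in E$ and $\varepsilon\in L^0_{++}(\mathcal{F})$. We claim there is $y\in G$ with $d(x,y)\le d(x,G)+\varepsilon$. To prove it, note that the family $\{d(x,g):g\in G\}$ is directed downward. Indeed, for $g_1,g_2\in G$ let $A=\{d(x,g_1)\le d(x,g_2)\}$; by $\sigma$-stability of $G$ the concatenation $\tilde I_A g_1+\tilde I_{A^c}g_2$ lies in $G$ and realizes $d(x,g_1)\wedge d(x,g_2)$. Hence there is a sequence $g_k\in G$ with $d(x,g_k)\downarrow d(x,G)$ a.s. Set $A_k=\{d(x,g_k)< d(x,G)+\varepsilon\}$, which increases to $\Omega$, and disjointify it into a countable partition $\{B_k\}$. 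The concatenation $y=\sum_k\tilde I_{B_k}g_k$ then belongs to $G$ by $\sigma$-stability and satisfies the claimed bound.

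\textbf{Step 2: the iteration.} Fix $x_0\in E$ and $x_1\in T(x_0)$. Inductively, given $x_n\in T(x_{n-1})$, we have
\[
d(x_n,T(x_n))\le H(T(x_{n-1}),T(x_n))\le \alpha\, d(x_{n-1},x_n).
\]
Apply Step 1 with the error $\varepsilon_n=(\beta^{n}-\alpha\beta^{n-1})\,\delta_n$, where $\delta_n=d(x_{n-1},x_n)/\beta^{n-1}$ on $\{d(x_{n-1},x_n)>0\}$ and $\delta_n=\beta^{n}$ elsewhere, so that $\varepsilon_n\in L^0_{++}$. This produces $x_{n+1}\in T(x_n)$ with $d(x_n,x_{n+1})\le \beta\, d(x_{n-1},x_n)$ on the set where the distance is positive. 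On the set where it vanishes, $x_n\in T(x_n)$ already holds locally, and there we may simply take $x_{n+1}$ with $d(x_n,x_{n+1})\le$ a small multiple of $\beta^{n}$. Summing these estimates gives $d(x_n,x_{n+p})\le C\sum_{k\ge n}\beta^k=C\beta^n/(1-\beta)$ for some $C\in L^0_+(\mathcal{F})$. Hence $\{x_n\}$ is Cauchy a.s., and in particular $\mathcal{T}_{\varepsilon,\lambda}$-Cauchy. By completeness, $x_n\to x^*$.

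\textbf{Step 3: limit.} We estimate
\[
d(x^*,T(x^*))\le d(x^*,x_{n+1})+d(x_{n+1},T(x^*))\le d(x^*,x_{n+1})+H(T(x_n),T(x^*))\le d(x^*,x_{n+1})+\alpha\,d(x_n,x^*),
\]
which tends to $0$ in probability, so $d(x^*,T(x^*))=0$. It remains to conclude $x^*\in T(x^*)$. Applying Step 1 with errors $1/k$ gives $y_k\in T(x^*)$ with $d(x^*,y_k)\le 1/k$. Thus $y_k\to x^*$ in $\mathcal{T}_{\varepsilon,\lambda}$, and since $T(x^*)$ is $\mathcal{T}_{\varepsilon,\lambda}$-closed, $x^*\in T(x^*)$.

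The main obstacle is Step 1 together with the careful choice of the errors $\varepsilon_n$ in Step 2. Exact attainment of $d(x,G)$ fails in general, and strict positivity of the errors must be preserved on the region where consecutive distances vanish. This is exactly where the $\sigma$-stability assumed in $CB_\sigma(E)$ is indispensable.
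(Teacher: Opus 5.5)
Your proof is correct. The survey states this theorem without proof and defers to \cite{GWYZ2020}. Your argument is the expected random version of Nadler's iteration: Step 1 is Corollary~\ref{corollary4.5} applied to the $\sigma$-stable set $\{d(x,g): g\in G\}$, and the $\sigma$-stability together with the $\mathcal{T}_{\varepsilon,\lambda}$-closedness of $T(x^*)$ upgrades $d(x^*,T(x^*))=0$ to $x^*\in T(x^*)$.

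The estimate you leave implicit follows by induction from $d(x_n,x_{n+1})\le\max\{\beta\, d(x_{n-1},x_n),\beta^{n}\}$, which gives $d(x_n,x_{n+1})\le (d(x_0,x_1)\vee 1)\beta^{n}$. Alternatively, deterministic additive errors $\varepsilon_n=2^{-n}$ avoid your case split entirely, since $d(x_n,x_{n+1})\le\alpha\, d(x_{n-1},x_n)+2^{-n}$ already forces $\sum_n d(x_{n-1},x_n)\le (d(x_0,x_1)+1)/(1-\alpha)$ a.s.
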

	\par
	We also proved in \cite[Corollary 3.13]{GWYZ2020} that Iton's random multivalued contraction fixed point theorem \cite{I1977} can be regarded as a special case of Theorem \ref{theorem3.6}.
	\par
	In concluding this section, let us give the $L^{0}$-version of Banach contraction mapping principle without  proof since its proof is routine but somewhat tedious.\\
		\par
\par
\begin{theorem}\label{theorem3.7}
Let $(E, d)$ be a $\sigma$-stable and $L^{0}$-complete $RM$ space with base $(\Omega, \mathcal{F}, P)$, $\alpha \in L^{0}_{+}(\mathcal{F})$ with $\alpha < 1$ on $\Omega$ and $T: E \to E$ such that $d(T(x), T(y)) \leq \alpha d(x, y)$ for any $x$ and $y$ in $E$. Then we have the following:
	\begin{enumerate}[{\rm(1)}]
		\item $T$ has a unique fixed point $x^{*}$.
		\item $d(T^{n}(x), x{}^*) \leq \frac{\alpha^{n}}{1 - \alpha} d(x, T(x))$ for any $x \in E$ and $n \in L^{0}(\mathcal{F}, \mathbb{N})$, where $\alpha^{n}=\sum^{\infty}_{k=1}\tilde{I}_{A_{k}}\alpha^{k}$ for $n=\sum^{\infty}_{k=1}\tilde{I}_{A_{k}}\cdot k$.
		\item[(3)] $\{T^{n}(x), n \in L^{0}(\mathcal{F}, \mathbb{N})\}$ converges in $\mathcal{T}_{c}$ to $x^{*}$ for any $x \in E$.
	\end{enumerate}
\end{theorem}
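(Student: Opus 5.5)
The quickest route to (1) is to reduce to Theorem \ref{theorem3.1}. By Proposition \ref{proposition2.15}(1), an $L^{0}$-complete $RM$ space is $(\varepsilon,\lambda)$-complete. Theorem \ref{theorem3.1} then immediately gives existence and uniqueness of the fixed point $x^{*}$, which is (1).

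Statement (1) of Theorem \ref{theorem3.1} also gives the estimate $d(T^{k}(x),x^{*})\leq \frac{\alpha^{k}}{1-\alpha}d(x,T(x))$ for every deterministic $k\in\mathbb{N}$; this is statement (2) there. A direct derivation is as follows. Iterating the contraction gives $d(T^{j}(x),T^{j+1}(x))\leq\alpha^{j}d(x,T(x))$. Summing this geometric bound, and using that $1-\alpha\in L^{0}_{++}(\mathcal{F})$ is invertible, gives
\[
d(T^{k}(x),T^{m}(x))\leq \frac{\alpha^{k}}{1-\alpha}d(x,T(x)) \quad \text{for } m>k.
\]
Letting $m\to\infty$ in $\mathcal{T}_{\varepsilon,\lambda}$ gives the same bound for $d(T^{k}(x),x^{*})$, since $d(\cdot,x^{*})$ is continuous and the order is closed under convergence in probability.

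For a random index $n=\sum_{k}\tilde{I}_{A_{k}}k$, I will pass from deterministic to random exponents using stability. By definition $T^{n}(x)=\sum_k\tilde I_{A_k}T^{k}(x)$. Since $\tilde I_{A_k}d(T^{n}(x),T^{k}(x))=0$, the triangle inequality gives
\[
\tilde{I}_{A_k}d(T^{n}(x),x^{*})=\tilde{I}_{A_k}d(T^{k}(x),x^{*})\leq \tilde I_{A_k}\frac{\alpha^{k}}{1-\alpha}d(x,T(x)).
\]
Gluing over the partition $\{A_k\}$ yields (2). Note that no $\sigma$-stability of $T$ is needed here, only that of $E$, which is what makes $T^{n}(x)$ well defined.

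For (3), the key fact is that $\alpha<1$ on $\Omega$ means $\alpha^{k}\to 0$ pointwise a.s., not uniformly. Given $\varepsilon\in L^{0}_{++}(\mathcal{F})$, set $\xi=\frac{d(x,T(x))}{1-\alpha}$. Let $B_k$ be the set where $\alpha^{k}\xi<\varepsilon$; the $B_k$ increase to $\Omega$ a.s. Define $n_0=\sum_k \tilde I_{B_k\setminus B_{k-1}}k\in L^{0}(\mathcal{F},\mathbb{N})$. For every $n\geq n_0$, since $\alpha^{n}\leq\alpha^{n_0}$, we get $d(T^{n}(x),x^{*})\leq\alpha^{n_0}\xi<\varepsilon$ on $\Omega$. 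This is exactly $\mathcal{T}_c$-convergence of the net $\{T^{n}(x)\}$ indexed by the upward-directed set $L^{0}(\mathcal{F},\mathbb{N})$.

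The main obstacle is this step: one must interpret (3) as convergence of a net over random indices, because ordinary sequences generally fail to converge in $\mathcal{T}_c$. The construction of $n_0$ from the $B_k$ is what handles this. The random $n_0$ is chosen to be piecewise constant on $\{B_k\setminus B_{k-1}\}$, a partition of $\Omega$ up to a null set since $\alpha<1$ forces $\bigcup_k B_k=\Omega$ a.s.
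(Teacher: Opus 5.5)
Your proof is correct. The paper gives no proof of this theorem; it calls the proof routine but tedious. The only hint of the intended argument is Remark~\ref{remark3.8}, which points to working natively in $\mathcal{T}_c$. There, Proposition~\ref{proposition3.2} makes $T$ $\sigma$-stable, so $T^{n+m}=T^{n}\circ T^{m}$ for $n,m\in L^{0}(\mathcal{F},\mathbb{N})$. One would then presumably run the Picard scheme over the directed set $L^{0}(\mathcal{F},\mathbb{N})$, show that $\{T^{n}(x)\}$ is a $\mathcal{T}_c$-Cauchy net, and obtain $x^{*}$ from $L^{0}$-completeness.

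You take a different route. You import existence, uniqueness and the deterministic estimate from Theorem~\ref{theorem3.1}, then pass to random exponents purely by gluing via $\tilde{I}_{A_k}d(T^{n}(x),x^{*})=\tilde{I}_{A_k}d(T^{k}(x),x^{*})$. You get (3) from the explicit $n_0$ built from the sets where $\alpha^{k}\xi<\varepsilon$. This is shorter, and as you observe it uses $\sigma$-stability only of $E$ (to define $T^{n}(x)$), not of $T$, and never needs the semigroup law. The cost is that existence comes through the $(\varepsilon,\lambda)$-topology rather than inside $\mathcal{T}_c$, so you depend on a completeness transfer.

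For that transfer, cite Proposition~\ref{proposition2.15}(2) rather than (1). The implication ``$L^{0}$-complete $\Rightarrow$ $(\varepsilon,\lambda)$-complete'' fails for general $RM$ spaces. For example, $L^{\infty}(\mathcal{F})$ with $d(x,y)=|x-y|$ is $\mathcal{T}_c$-closed in $L^{0}(\mathcal{F})$, hence $L^{0}$-complete, but it is not $(\varepsilon,\lambda)$-complete. Here the implication is valid because $E$ is $\sigma$-stable, hence finitely stable, and (2) then applies. With that citation adjusted, the argument is complete.
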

\par
\begin{remark}\label{remark3.8}
By Proposition \ref{proposition3.2}, $T$ in Theorem \ref{theorem3.7} is $\sigma$-stable, and thus $T^{n+m} = T^{n} \circ T^{m}$ for any $n$ and $m$ in $L^{0}(\mathcal{F}, \mathbb{N})$. Although $\{T^{n}(x), n \in \mathbb{N}\}$ converges a.s. to $x^{*}$ by (3) of Theorem \ref{theorem3.1}, we can not prove that this sequence also converges in $\mathcal{T}_{c}$ to $x^{*}$, from which we can observe  that (2) and (3) are indeed interesting.
\end{remark}

\section{The Ekeland  variational principle and Caristi fixed point theorem on a complete random metric space}\label{section4}

The classical Ekeland variational principle \cite{E1974} and Caristi fixed point theorem \cite{C1976} are the powerful tools in nonlinear functional analysis. Motivated by random convex analysis and its applications to conditional convex risk measures \cite{G2024, GZWYYZ2017}, in \cite{GWYZ2020, GY2012} we generalized them to complete $RM$ spaces.
\par
\begin{definition}[{\cite{GWYZ2020, GY2012,GZWYYZ2017}}]\label{definition4.1}
Let $(E,d)$ be an $RM$ space with base $(\Omega,\mathcal{F},P)$ and $f$ be a function from $E$ to $\bar{L}^{0}(\mathcal{F})$. $f$ is said to be
\begin{enumerate}[{\rm(1)}]
\item proper if $f(x) > -\infty$ on $\Omega$ for any $x\in E$, and $dom(f):= \{x \in E: f(x) < +\infty \text{ on } \Omega\}$ is nonempty.
\item proper and $\sigma$-stable if $E$ is $\sigma$-stable and $f(\sum\limits_{n=1}^{\infty} \tilde{I}_{A_{n}} x_{n}) = \sum\limits_{n=1}^{\infty} \tilde{I}_{A_{n}} f(x_{n})$
for any sequence $\{x_{n}, n \in \mathbb{N}\}$ in $E$ and any countable partition $\{A_{n}, n \in \mathbb{N}\}$ of $\Omega$ to $\mathcal{F}$, where we make the convention that $0 \cdot (+\infty) = 0$.
\item proper and $\mathcal{T}_{\varepsilon,\lambda}$-lower semicontinuous if $epi(f) := \{(x,r) \in E \times L^{0}(\mathcal{F}): f(x) \leq r\}$ is closed in $(E,\mathcal{T}_{\varepsilon,\lambda}) \times (L^{0}(\mathcal{F}),\mathcal{T}_{\varepsilon,\lambda})$.
\item proper and $\mathcal{T}_{c}$-lower semicontinuous if $epi(f)$ is closed in $(E,\mathcal{T}_{c}) \times (L^{0}(\mathcal{F}),\mathcal{T}_{c})$.
\item proper and bounded from below (or lower bounded) if there exists $\xi \in L^{0}(\mathcal{F})$ such that $f(x) \geq \xi$ for any $x \in E$.
\end{enumerate}
\end{definition}
\par
\begin{remark}\label{remark4.2}
Let $(E,d)$ be an $RM$ space with base $(\Omega,\mathcal{F},P)$, then, as usual, $E \times L^{0}(\mathcal{F})$ is still an $RM$ space with base $(\Omega,\mathcal{F},P)$ under the random metric $d'$ defined by $d'((x_{1},r_{1}),(x_{2},r_{2})) = d(x_{1},x_{2}) + |r_{1} - r_{2}|$, and hence a proper $\mathcal{T}_{\varepsilon,\lambda}$-lower semicontinuous function $f: E \to \bar{L}^{0}(\mathcal{F})$ is also $\mathcal{T}_c$-lower semicontinuous. But, Lemma~\ref{lemma4.3} below shows that the two kinds of lower semicontinuities for a $\sigma$-stable function coincide in the case when $(E,d)$ is $\sigma$-stable!
\end{remark}
\par

\begin{lemma}[{\cite[Theorem~2.12]{GWYZ2020}}]\label{lemma4.3}
Let $(E,d)$ be an $RM$ space with base $(\Omega,\mathcal{F},P)$ and $G$ be a $\sigma$-stable subset of $E$. Then $\overline{G}_{\varepsilon,\lambda} = \overline{G}_{c}$, where $\overline{G}_{\varepsilon,\lambda}$ and $\overline{G}_{c}$ stand for the closures of $G$ under $\mathcal{T}_{\varepsilon,\lambda}$ and $\mathcal{T}_{c}$, respectively, and hence $G$ is $\mathcal{T}_{\varepsilon,\lambda}$-closed iff $G$ is $\mathcal{T}_{c}$-closed.
\end{lemma}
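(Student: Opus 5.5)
Since $\mathcal{T}_c$ is finer than $\mathcal{T}_{\varepsilon,\lambda}$ (every $U(\varepsilon,\lambda)$ contains some $U(\varepsilon')$ with $\varepsilon'$ the constant $\varepsilon$), we always have $\overline{G}_{c}\subset\overline{G}_{\varepsilon,\lambda}$. So the whole task is the reverse inclusion: given $x\in\overline{G}_{\varepsilon,\lambda}$ and $\varepsilon\in L^{0}_{++}(\mathcal{F})$, I will produce $g\in G$ with $d(x,g)<\varepsilon$ on $\Omega$. The final equivalence about closedness then follows immediately, since $G=\overline{G}_{\varepsilon,\lambda}$ iff $G=\overline{G}_{c}$.

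The plan has two steps. First, I extract from $\mathcal{T}_{\varepsilon,\lambda}$-closure a sequence $\{g_n\}$ in $G$ with $d(g_n,x)\to 0$ a.s. The $(\varepsilon,\lambda)$-uniformity is metrizable, so there is a sequence with $d(g_n,x)\to0$ in probability, and a subsequence converges a.s.

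Second, I glue pieces together via $\sigma$-stability. Put $A_n=\{\omega: d(g_n,x)(\omega)<\varepsilon(\omega)\}$, using fixed representatives. Since $\varepsilon>0$ a.s. and $d(g_n,x)\to0$ a.s., we get $\bigcup_n A_n=\Omega$ up to a null set. Disjointify by setting $B_1=A_1$ and $B_n=A_n\setminus\bigcup_{k<n}A_k$, and absorb the null set into $B_1$. This gives a countable partition $\{B_n\}$ of $\Omega$, and I set $g=\sum_n\tilde I_{B_n}g_n\in G$ using $\sigma$-stability of $G$ (Definition \ref{definition2.12}). By the triangle inequality, $\tilde I_{B_n}d(g,x)\le\tilde I_{B_n}d(g,g_n)+\tilde I_{B_n}d(g_n,x)=\tilde I_{B_n}d(g_n,x)<\varepsilon$ on $B_n$. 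Hence $d(g,x)<\varepsilon$ on $\Omega$, that is, $(x,g)\in U(\varepsilon)$.

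The only delicate point is the localization step $\tilde I_{B}d(g,x)=\tilde I_B d(g_n,x)$ on $B_n$. It uses $\tilde I_{B_n}d(g,g_n)=0$ together with the triangle inequality applied pointwise a.s. on $B_n$. I also need to make sure the null set where a.s. convergence fails is handled, which is done by absorbing it into one block. Neither step is a real obstacle, and no completeness or stability of $E$ itself is needed, only of $G$.
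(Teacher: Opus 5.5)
Your proof is correct, and it is essentially the standard argument behind this lemma. The survey itself only cites \cite[Theorem~2.12]{GWYZ2020}, whose proof adapts Guo's argument in \cite{G2010} for $RN$ modules: $\overline{G}_{c}\subset\overline{G}_{\varepsilon,\lambda}$ because $\mathcal{T}_{c}$ is finer, and conversely one passes to an a.s.\ convergent sequence $\{g_n\}$ in $G$ and forms the countable concatenation $\sum_{n}\tilde{I}_{B_n}g_n\in G$ along the disjointified sets $(d(g_n,x)<\varepsilon)$. Your handling of the exceptional null set and the localization via $\tilde{I}_{B_n}d(g,g_n)=0$ are right, and so is your remark that only the $\sigma$-stability of $G$ (not of $E$) is needed.
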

\par
Three corollaries below are interesting and useful.
\par
\begin{corollary}[{\cite[Theorem~3.5]{GWYZ2020}}]\label{corollary 4.4}
	Let $(E,d)$ be a $\sigma$-stable $RM$ space with base $(\Omega,\mathcal{F},P)$ and $f: E \to \bar{L}^{0}(\mathcal{F})$ be $\sigma$-stable and proper. Then $f$ is $\mathcal{T}_{\varepsilon,\lambda}$-lower semicontinuous iff $f$ is $\mathcal{T}_c$-lower semicontinuous (by noting that $epi(f)$ is a $\sigma$-stable subset in $E \times L^0(\mathcal{F})$).
\end{corollary}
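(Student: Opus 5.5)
The proof reduces to Lemma~\ref{lemma4.3} applied to the epigraph. Since $f$ is proper by hypothesis, $\mathcal{T}_{\varepsilon,\lambda}$-lower semicontinuity means that $epi(f)$ is closed in $(E,\mathcal{T}_{\varepsilon,\lambda})\times(L^0(\mathcal{F}),\mathcal{T}_{\varepsilon,\lambda})$. Likewise, $\mathcal{T}_c$-lower semicontinuity means that $epi(f)$ is closed in the product of the $\mathcal{T}_c$ topologies. So it suffices to show three things: these product topologies are the $(\varepsilon,\lambda)$- and $L^0$-topologies of a single $RM$ space, that space is $\sigma$-stable, and $epi(f)$ is a $\sigma$-stable subset of it.

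\textbf{Step 1 (identify the product topologies).} Following Remark~\ref{remark4.2}, equip $E\times L^0(\mathcal{F})$ with the random metric
\[
d'((x_1,r_1),(x_2,r_2))=d(x_1,x_2)+|r_1-r_2|.
\]
Since $d'$ dominates each coordinate metric, a basic $d'$-neighbourhood contains a product of coordinate neighbourhoods. Conversely, a product of coordinate neighbourhoods at levels $\varepsilon/2$ (and $\lambda/2$ in the $(\varepsilon,\lambda)$ case) lies in the $d'$-neighbourhood at level $\varepsilon$ (and $\lambda$). This holds both for $U(\varepsilon,\lambda)$-type neighbourhoods and for $U(\varepsilon)$ with $\varepsilon\in L^0_{++}(\mathcal{F})$. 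Hence the $(\varepsilon,\lambda)$-topology of $d'$ is the product of the two $(\varepsilon,\lambda)$-topologies, and the same holds for $\mathcal{T}_c$.

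\textbf{Step 2 ($\sigma$-stability).} The space $E\times L^0(\mathcal{F})$ is $\sigma$-stable with respect to $d'$. Given $(x_n,r_n)$ and a countable partition $\{A_n\}$, the pair
\[
\Bigl(\sum_n\tilde I_{A_n}x_n,\ \sum_n\tilde I_{A_n}r_n\Bigr)
\]
is the required concatenation. This uses the $\sigma$-stability of $E$ and the obvious $\sigma$-stability of $L^0(\mathcal{F})$.

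Next, $epi(f)$ is $\sigma$-stable. Take $(x_n,r_n)\in epi(f)$ and put $x=\sum_n\tilde I_{A_n}x_n$ and $r=\sum_n\tilde I_{A_n}r_n$. By the $\sigma$-stability of $f$,
\[
\tilde I_{A_n}f(x)=\tilde I_{A_n}f(x_n)\le \tilde I_{A_n}r_n=\tilde I_{A_n}r
\]
for every $n$. Since $\{A_n\}$ partitions $\Omega$, this gives $f(x)\le r$. The convention $0\cdot(+\infty)=0$ makes these localizations legitimate for extended-valued $f$.

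\textbf{Step 3 (conclude).} Apply Lemma~\ref{lemma4.3} to the $RM$ space $(E\times L^0(\mathcal{F}),d')$ and its $\sigma$-stable subset $epi(f)$. The lemma states that a $\sigma$-stable subset is $\mathcal{T}_{\varepsilon,\lambda}$-closed iff it is $\mathcal{T}_c$-closed, which by Step 1 is exactly the claimed equivalence.

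The only point requiring care is Step 1, namely that the product topology coincides with the topology of $d'$, especially for $\mathcal{T}_c$. This is a routine neighbourhood comparison, so no real obstacle is expected.
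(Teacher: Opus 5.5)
Your proof is correct and follows the paper's route. The paper views $epi(f)$ as a $\sigma$-stable subset of the $RM$ space $(E\times L^0(\mathcal{F}),d')$ from Remark~\ref{remark4.2} and applies Lemma~\ref{lemma4.3}. You have filled in the routine details that the paper leaves to its parenthetical remark: the product topologies coincide with the topologies induced by $d'$, and $epi(f)$ is $\sigma$-stable because $f$ is.
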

\par
\begin{corollary}[{\cite{GZWG2021}}]\label{corollary4.5}
	Let $G$ be a $\sigma$-stable subset of $L^0(\mathcal{F})$ and $\varepsilon \in L_{++}^0(\mathcal{F})$. Then there exists some $g_{\varepsilon} \in G$ such that $\bigvee G < g_{\varepsilon} + \varepsilon$ on $\Omega$ ($\bigwedge G > g_{\varepsilon} - \varepsilon$ on $\Omega$) if $G$ is bounded from above (accordingly, bounded from below).
\end{corollary}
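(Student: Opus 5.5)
We prove the bounded-above case; the bounded-below case follows by applying it to $-G=\{-g: g\in G\}$, which is again $\sigma$-stable and bounded from above, with $\bigwedge G = -\bigvee(-G)$. Set $\xi:=\bigvee G$. Since $G$ is bounded from above, $\xi\in L^0(\mathcal{F})$ by Dedekind completeness of $L^0(\mathcal{F})$.

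The first step is to show that $G$ is upward directed. For $g_1,g_2\in G$ let $A=\{g_1^0\ge g_2^0\}$ for chosen representatives. Then $g_1\vee g_2=\tilde I_A g_1+\tilde I_{A^c}g_2$ is a countable concatenation (with $A_1=A$, $A_2=A^c$, $A_n=\emptyset$ otherwise), so it lies in $G$ by $\sigma$-stability. By the standard property of the complete lattice $L^0(\mathcal{F})$ (as in \cite{DS1958}), the supremum of an upward directed set is the a.s. limit of some nondecreasing sequence in it. Hence there is a sequence $\{h_n,n\in\mathbb{N}\}\subset G$ with $h_n\uparrow\xi$ a.s.

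The second step is the pasting. Put $B_n=\{\omega: \xi^0(\omega)-h_n^0(\omega)<\varepsilon^0(\omega)\}$. Because $\varepsilon>0$ on $\Omega$ and $\xi-h_n\downarrow 0$ a.s., the sets $B_n$ increase and $\bigcup_n B_n=\Omega$ up to a null set. Form the disjoint sets $A_1=B_1$ and $A_n=B_n\setminus B_{n-1}$, and put the null remainder into $A_1$ (which does not affect equivalence classes). This gives a countable partition of $\Omega$. Set $g_\varepsilon=\sum_n\tilde I_{A_n}h_n$, which belongs to $G$ by $\sigma$-stability. On each $A_n\subset B_n$ we have $g_\varepsilon=h_n>\xi-\varepsilon$, so $\bigvee G<g_\varepsilon+\varepsilon$ on $\Omega$.

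The only nontrivial ingredient is the directedness-to-sequence fact. If it is not quoted directly, it can be obtained as follows. Apply the countable-sup property of $L^0(\mathcal{F})$ to get a countable subfamily $\{g_k\}$ of $G$ with the same supremum. Then take $h_n=g_1\vee\cdots\vee g_n$, which lies in $G$ by the finite stability shown above. This is where the hypothesis of $\sigma$-stability, through its finite stability, is essential; the rest is bookkeeping with the partition.
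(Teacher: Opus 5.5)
Your proof is correct. The survey gives no argument of its own (it cites \cite{GZWG2021}), but it places the statement right after Lemma~\ref{lemma4.3}, and from there it is a one-line consequence. Once $G$ is upward directed, $\bigvee G$ is the a.s.\ limit of a nondecreasing sequence in $G$, so $\bigvee G\in\overline{G}_{\varepsilon,\lambda}=\overline{G}_{c}$. The $\mathcal{T}_c$-neighborhood $\{\eta\in L^0(\mathcal{F}): |\eta-\bigvee G|<\varepsilon \text{ on } \Omega\}$ of $\bigvee G$ must therefore meet $G$, and any point of the intersection serves as $g_\varepsilon$.

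Your route shares the first step (directedness and the monotone sequence $h_n\uparrow\bigvee G$). You then replace the appeal to Lemma~\ref{lemma4.3} by the explicit countable concatenation $g_\varepsilon=\sum_n\tilde{I}_{A_n}h_n$, taken along the sets where $h_n$ first comes within $\varepsilon$ of $\bigvee G$. That pasting is exactly the mechanism behind Lemma~\ref{lemma4.3}, specialized to this case. So your argument is the self-contained, elementary version; the paper's placement buys brevity by leaning on the closure theorem.

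Two small remarks. First, your closing comment understates where the hypothesis is used. The countable pasting, not the finite stability behind directedness, is where full $\sigma$-stability is indispensable, and it is the real content rather than bookkeeping. A merely finitely stable set can fail the conclusion: take the strictly negative simple functions on $[0,1]$ with $\varepsilon(\omega)=\omega$. Second, choose representatives with $h_n^0$ pointwise nondecreasing, e.g.\ replace $h_n^0$ by $\max_{k\le n}h_k^0$. Alternatively, disjointify by $B_n\setminus\bigcup_{k<n}B_k$. Either way your $A_n$ are then genuinely disjoint rather than only disjoint up to null sets.
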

\par
\begin{corollary}[{\cite[Theorem~3.4]{GWYZ2020}}]\label{corollary4.6}
	Let $(E,d)$ be a $\sigma$-stable $RM$ space with base $(\Omega,\mathcal{F},P)$ and $f: E \to \bar{L}^{0}(\mathcal{F})$ be proper and $\sigma$-stable. If $f$ is bounded from below, then, for any given $\varepsilon \in L_{++}^{0}(\mathcal{F})$, there exists some $x_{\varepsilon} \in E$ such that $f(x_{\varepsilon}) < \bigwedge \{f(x): x \in E\} + \varepsilon$ on $\Omega$. Similarly, if $f$ is bounded from above, then, for any given $\varepsilon \in L_{++}^{0}(\mathcal{F})$, there exists some $x_{\varepsilon} \in E$ such that $f(x_{\varepsilon}) > \bigvee \{f(x): x \in E\} - \varepsilon$ on $\Omega$.
\end{corollary}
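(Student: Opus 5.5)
The last statement is Corollary \ref{corollary4.6}. The plan is to reduce it to Corollary \ref{corollary4.5}, applied to the set of values
\[
G:=\{f(x): x\in E\}.
\]

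\textbf{Step 1: $G$ is a $\sigma$-stable subset of $L^0(\mathcal{F})$, or can be made one.} By properness, $f(x)>-\infty$ on $\Omega$ for every $x$, but $f$ may take the value $+\infty$. To stay inside $L^0(\mathcal{F})$, I would work on $\mathrm{dom}(f)$ for the bounded-below case.

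First, $\mathrm{dom}(f)$ is $\sigma$-stable. If $x_n\in \mathrm{dom}(f)$ and $\{A_n\}$ is a countable partition, then $\sigma$-stability of $f$ gives
\[
f\Bigl(\sum_n \tilde I_{A_n}x_n\Bigr)=\sum_n \tilde I_{A_n}f(x_n),
\]
which is finite on each $A_n$ and hence finite a.s.

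Second, $\bigwedge f(E)=\bigwedge f(\mathrm{dom}(f))$. Given $x\in E$, fix $x_0\in \mathrm{dom}(f)$ and let $B=\{f(x)<+\infty\}$. Set $y=\tilde I_B x+\tilde I_{B^c}x_0$, which exists since $E$ is $\sigma$-stable. Then $y\in\mathrm{dom}(f)$ and $f(y)\le f(x)$: the two agree on $B$, and $f(x)=+\infty$ on $B^c$.

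With this, $G_0:=f(\mathrm{dom}(f))\subset L^0(\mathcal{F})$ is $\sigma$-stable. Indeed, $\sum_n\tilde I_{A_n}f(x_n)=f(\sum_n\tilde I_{A_n}x_n)\in G_0$. The set $G_0$ is bounded from below by the $\xi$ from Definition \ref{definition4.1}(5).

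\textbf{Step 2: apply Corollary \ref{corollary4.5}.} This yields $g_\varepsilon\in G_0$ with $\bigwedge G_0>g_\varepsilon-\varepsilon$ on $\Omega$. Writing $g_\varepsilon=f(x_\varepsilon)$ gives the claim
\[
f(x_\varepsilon)<\bigwedge\{f(x):x\in E\}+\varepsilon \quad\text{on } \Omega .
\]

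\textbf{Bounded-above case.} This is symmetric. Here the infinity issue comes from the other side. If "bounded from above" means $f(x)\le\eta\in L^0(\mathcal{F})$, then $f$ is already $L^0$-valued: it is $>-\infty$ by properness and $\le\eta<+\infty$. So $G=f(E)$ is $\sigma$-stable and bounded above, and the sup version of Corollary \ref{corollary4.5} applies directly.

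\textbf{Main obstacle.} The only real subtlety is the $+\infty$ values in the first case, which is handled by the gluing trick in Step 1. One could also reprove Corollary \ref{corollary4.5} inline if desired. The idea would be to take a sequence in $G$ whose infimum equals $\bigwedge G$, which exists since $G$ is downward directed by finite stability. Then one glues along the sets $\{g_n<\bigwedge G+\varepsilon\}\setminus\bigcup_{k<n}\{g_k<\bigwedge G+\varepsilon\}$, which cover $\Omega$ a.s.
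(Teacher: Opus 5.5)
Your proposal is correct and takes the intended route: the survey gives no separate proof of Corollary~\ref{corollary4.6}, and it is meant to follow from Corollary~\ref{corollary4.5} applied to the $\sigma$-stable value set $\{f(x): x\in E\}$, exactly as you do. Your restriction to $\mathrm{dom}(f)$ via the gluing $\tilde I_B x+\tilde I_{B^c}x_0$ properly removes the $+\infty$ values, so Corollary~\ref{corollary4.5}, which is stated only for subsets of $L^0(\mathcal{F})$, applies verbatim.
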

\par

\begin{theorem}[{\cite[Theorem~2.12]{GY2012}}]\label{theorem4.7}
	Let $(E,d)$ be an $(\varepsilon,\lambda)$-complete $RM$ space with base $(\Omega,\mathcal{F},P)$ and $f: E \to \bar{L}^{0}(\mathcal{F})$ be a proper, $\mathcal{T}_{\varepsilon,\lambda}$-lower semicontinuous and lower bounded function. Then, for each $x_{0} \in dom(f)$, there exists $v \in dom(f)$ such that the following are satisfied:
	\begin{enumerate}[{\rm(1)}]
		\item $f(v) \leq f(x_{0}) - d(x_{0}, v)$.
		\item $f(x) \not\leq f(v) - d(x, v)$ for any $x \neq v$, namely, there exists $A_{x} \in \mathcal{F}$ with $P(A_{x}) > 0$ such that $f(x) > f(v) - d(x, v)$ on $A_{x}$.
	\end{enumerate}
\end{theorem}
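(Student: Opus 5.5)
We prove Theorem \ref{theorem4.7} by a Brezis--Browder style ordering argument. The difficulty is that $E$ is only $(\varepsilon,\lambda)$-complete and need not be $\sigma$-stable, so $L^0$-valued infima are not attained. We therefore work with the real-valued functional $\xi\mapsto \int \arctan(\xi)\,dP$, or equivalently the expectation of a bounded strictly increasing transform.

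\emph{Step 1: the order and the slices.} Define the relation $x\preceq y$ iff $f(y)\le f(x)-d(x,y)$. Since $f(x)>-\infty$ and $f(x)<+\infty$ on $\Omega$ for $x\in dom(f)$, the usual cancellation shows that $\preceq$ is a partial order on $dom(f)$: antisymmetry follows from $d(x,y)\le 0$, and transitivity from the triangle inequality. For $x\in dom(f)$ put $S(x)=\{y\in E: f(y)\le f(x)-d(x,y)\}$. We claim $S(x)$ is $\mathcal{T}_{\varepsilon,\lambda}$-closed. Indeed, if $y_k\to y$, then passing to a subsequence $d(x,y_k)\to d(x,y)$ a.s. Hence $(y_k, f(x)-d(x,y_k))\in epi(f)$ converges in $\mathcal{T}_{\varepsilon,\lambda}$ to $(y,f(x)-d(x,y))$, and lower semicontinuity gives $y\in S(x)$.

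\emph{Step 2: a Cauchy sequence.} Fix $\phi(t)=\arctan t$, a bounded strictly increasing map extended to $\pm\infty$, and set $m(x)=\inf\{E\phi(f(y)): y\in S(x)\}$, a real number because $f$ is lower bounded. Starting from $x_0$, choose $x_{k+1}\in S(x_k)$ with $E\phi(f(x_{k+1}))<m(x_k)+2^{-k}$. Then $S(x_{k+1})\subset S(x_k)$ and $f(x_k)$ decreases a.s. Since $f(x_k)\ge \xi$, $f(x_k)$ converges a.s. to some $\eta\in L^0(\mathcal{F})$. Summing $d(x_k,x_{k+1})\le f(x_k)-f(x_{k+1})$ gives $d(x_k,x_l)\le f(x_k)-f(x_l)\to 0$ a.s. as $k,l\to\infty$. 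Thus $\{x_k\}$ is $\mathcal{T}_{\varepsilon,\lambda}$-Cauchy, and it has a limit $v$. By Step 1, $v\in\bigcap_k S(x_k)$. In particular $v\in S(x_0)$, which is (1), and $v\in dom(f)$.

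\emph{Step 3: maximality, the main obstacle.} Suppose $x\in E$ with $f(x)\le f(v)-d(x,v)$. Then $x\in S(v)\subset S(x_k)$ for all $k$, so $E\phi(f(x))\ge m(x_k)>E\phi(f(x_{k+1}))-2^{-k}$. We also have $f(x)\le f(v)\le f(x_{k+1})$. Since $\phi$ is bounded and $f(x_{k+1})\downarrow\eta\ge f(v)$ a.s., dominated convergence gives $E\phi(f(x))\ge E\phi(\eta)\ge E\phi(f(v))\ge E\phi(f(x))$. Hence all these expectations are equal. Strict monotonicity of $\phi$ together with the pointwise inequality $f(x)\le f(v)$ forces $f(x)=f(v)$ a.s. Substituting back yields $d(x,v)\le 0$, so $x=v$. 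This establishes (2): for $x\ne v$ the inequality $f(x)\le f(v)-d(x,v)$ fails, and this failure is exactly a set $A_x$ of positive measure on which $f(x)>f(v)-d(x,v)$. The delicate points to check are that the scalarization via $E\phi$ really captures a.s. order, which works because equality of expectations combined with a pointwise inequality forces a.s. equality, and that $\infty$-values are handled by the properness convention.
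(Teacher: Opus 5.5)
Your proof is correct. The survey states Theorem~\ref{theorem4.7} without proof and only cites \cite{GY2012}, so there is no argument in the text to compare against. Your route is the standard Brezis--Browder one: the order $x\preceq y \iff f(y)\le f(x)-d(x,y)$ on $dom(f)$, the real-valued scalarization $x\mapsto\int_\Omega \arctan f(x)\,dP$, and an explicit iteration to a maximal element. This route suits the hypotheses, because $E$ is not assumed $\sigma$-stable, so $L^0$-valued approximate minimizers in the sense of Corollary~\ref{corollary4.6} are unavailable.

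The key steps all check out:
\begin{itemize}
\item $S(x)$ is $\mathcal{T}_{\varepsilon,\lambda}$-closed, by closedness of $epi(f)$.
\item $\{x_k\}$ is Cauchy, from $d(x_k,x_l)\le f(x_k)-\eta\to 0$ a.s.
\item Maximality holds: a pointwise inequality $f(x)\le f(v)$ together with equal expectations of $\arctan$ forces $f(x)=f(v)$ a.s., hence $d(x,v)=0$.
\end{itemize}

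Two cosmetic corrections:
\begin{itemize}
\item $m(x)$ is finite simply because $\arctan$ is bounded. The lower bound on $f$ is needed elsewhere, to make $\eta$ finite and hence $\{x_k\}$ Cauchy.
\item No subsequence is needed in Step~1, since $|d(x,y_k)-d(x,y)|\le d(y_k,y)$ already gives convergence in probability.
\end{itemize}
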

\par
Theorem~\ref{theorem4.8} below is essentially equivalent to Theorem~\ref{theorem4.7}.
\par
\begin{theorem}[{\cite[Theorem~2.13]{GY2012}}]\label{theorem4.8}
	Let $(E,d)$ and $f$ be the same as in Theorem~\ref{theorem4.7}. Then there exists $v \in E$ such that $f(x) \not\leq f(v) - d(x, v)$ for any $x \neq v$.
\end{theorem}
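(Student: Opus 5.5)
Theorem~\ref{theorem4.8} follows directly from Theorem~\ref{theorem4.7}. Since $f$ is proper, $dom(f)\neq\emptyset$. Fix any $x_{0}\in dom(f)$ and apply Theorem~\ref{theorem4.7} to obtain $v\in dom(f)$ satisfying (1) and (2). Conclusion (2) is exactly the assertion of Theorem~\ref{theorem4.8}. For every $x\neq v$ it says $f(x)\not\leq f(v)-d(x,v)$, i.e.\ there is $A_{x}\in\mathcal{F}$ with $P(A_{x})>0$ on which $f(x)>f(v)-d(x,v)$. So Theorem~\ref{theorem4.8} is the special case of Theorem~\ref{theorem4.7} in which conclusion (1) is discarded.

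The remark that the two theorems are \emph{essentially equivalent} also asks for the converse: deriving Theorem~\ref{theorem4.7} from Theorem~\ref{theorem4.8}. I would do this by restriction. Fix $x_{0}\in dom(f)$ and set
\[
E_{0}=\{x\in E: f(x)\leq f(x_{0})-d(x_{0},x)\}.
\]
This set contains $x_{0}$. It is $\mathcal{T}_{\varepsilon,\lambda}$-closed, because it is the preimage of $epi(f)$ under the map $x\mapsto (x,f(x_{0})-d(x_{0},x))$. That map is continuous, since $d(x_{0},\cdot)$ is $\mathcal{T}_{\varepsilon,\lambda}$-continuous and $f(x_{0})\in L^{0}(\mathcal{F})$ (as $f(x_{0})$ is finite). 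Hence $(E_{0},d)$ is $(\varepsilon,\lambda)$-complete, and $f|_{E_{0}}$ is still proper, lower semicontinuous and lower bounded.

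Apply Theorem~\ref{theorem4.8} to $E_{0}$ to get $v\in E_{0}$ such that $f(x)\not\leq f(v)-d(x,v)$ for all $x\in E_{0}\setminus\{v\}$. Then $v\in E_{0}$ gives (1) directly. For (2), take $x\notin E_{0}$ and suppose $f(x)\leq f(v)-d(x,v)$. The triangle inequality gives
\[
f(x)\leq f(x_{0})-d(x_{0},v)-d(v,x)\leq f(x_{0})-d(x_{0},x),
\]
so $x\in E_{0}$, a contradiction.

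The one point needing care is the subtraction $f(v)-d(x,v)$ when $f(x)$ may equal $+\infty$ on parts of $\Omega$. I would handle it by noting that $v\in dom(f)$, so $f(v)-d(x,v)\in L^{0}(\mathcal{F})$. Then $f(x)\leq f(v)-d(x,v)$ forces $f(x)$ to be finite almost surely, and all the inequalities above are legitimate a.s.\ comparisons in $\bar L^{0}(\mathcal{F})$. This bookkeeping is the only real obstacle; the rest is formal.
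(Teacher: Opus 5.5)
Your argument is correct and is exactly the route the paper indicates when it calls Theorem~\ref{theorem4.8} essentially equivalent to Theorem~\ref{theorem4.7}. Since $f$ is proper, $dom(f)\neq\emptyset$, and applying Theorem~\ref{theorem4.7} at any $x_{0}\in dom(f)$ and discarding conclusion (1) gives Theorem~\ref{theorem4.8} verbatim. Your restriction argument via $E_{0}$ for the converse direction is also sound, but it is not needed for this statement.
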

\par
The following Caristi fixed point theorem on a complete $RM$ space is equivalent to Theorem~\ref{theorem4.8}.

\par
\begin{theorem}[{\cite[Theorem~2.14]{GY2012}}]\label{theorem4.9}
	Let $(E,d)$ and $f$ be the same as in Theorem~\ref{theorem4.8}. If $T: E \to E$ satisfies $f(T(u)) + d(T(u), u) \leq f(u)$ for any $u \in E$, then $T$ has a fixed point.
\end{theorem}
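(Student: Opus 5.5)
We derive Theorem~\ref{theorem4.9} directly from Theorem~\ref{theorem4.8}, mirroring the classical deduction of Caristi's theorem from Ekeland's principle. Since $(E,d)$ and $f$ satisfy the hypotheses of Theorem~\ref{theorem4.8}, there is $v\in E$ such that $f(x)\not\leq f(v)-d(x,v)$ for every $x\neq v$. The candidate fixed point is this $v$, and we argue by contradiction.

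Suppose $T(v)\neq v$ and put $x=T(v)$. The Caristi hypothesis with $u=v$ gives
\[
f(T(v))+d(T(v),v)\leq f(v).
\]
Rearranging yields $f(x)\leq f(v)-d(x,v)$. This contradicts the choice of $v$, so $T(v)=v$.

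The step needing care is the rearrangement, because $f$ takes values in $\bar{L}^{0}(\mathcal{F})$ and $+\infty$ may occur. We handle this in two pieces.

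\emph{Finiteness of $f(v)$.} Since $f$ is proper, $f(x)>-\infty$ on $\Omega$. Since $f$ is lower bounded, $f(x)\geq\xi$ for some $\xi\in L^{0}(\mathcal{F})$. We must still ensure $f(v)<+\infty$. The proof of Theorem~\ref{theorem4.8} obtains $v$ via Theorem~\ref{theorem4.7} starting from some $x_0\in dom(f)$, and clause (1) there gives $f(v)\leq f(x_0)-d(x_0,v)$, so $v\in dom(f)$. We will therefore invoke Theorem~\ref{theorem4.7} directly, guaranteeing $f(v)\in L^{0}(\mathcal{F})$.

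\emph{Moving the distance term.} With $f(v)$ finite, the Caristi inequality shows $f(T(v))\leq f(v)-d(T(v),v)<+\infty$ a.s., so $f(T(v))$ is finite as well. Subtracting the finite random variable $d(T(v),v)$ from both sides is then legitimate in $L^{0}(\mathcal{F})$, and the inequality is preserved. This gives exactly $f(x)\leq f(v)-d(x,v)$ with $x=T(v)\neq v$, which is forbidden by clause (2) of Theorem~\ref{theorem4.7}, equivalently by Theorem~\ref{theorem4.8}.

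No stability or $\sigma$-stability of $T$ is needed. The whole burden rests on the random Ekeland principle, Theorem~\ref{theorem4.7} and Theorem~\ref{theorem4.8}, which we are allowed to assume.
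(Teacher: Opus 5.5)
Your proposal is correct and follows the same route the paper indicates: the survey gives no separate proof, but it cites \cite{GY2012} and calls Theorem~\ref{theorem4.9} equivalent to Theorem~\ref{theorem4.8}, and the derivation there is exactly yours, namely taking $v$ from the random Ekeland principle and testing it with $x=T(v)$. Your finiteness bookkeeping is harmless but not needed: since $f>-\infty$ on $\Omega$ and $d(T(v),v)\in L^{0}_{+}(\mathcal{F})$, the inequality $f(T(v))+d(T(v),v)\leq f(v)$ is equivalent to $f(T(v))\leq f(v)-d(T(v),v)$ pointwise in the extended reals, including where $f(v)=+\infty$, so applying Theorem~\ref{theorem4.8} directly already suffices.
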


\par
Corollary~\ref{corollary4.10} below is the $\mathcal{T}_{c}$-version of Theorem \ref{theorem4.9}
\par
\begin{corollary}\label{corollary4.10}
	Let $(E,d)$ be a $\sigma$-stable and $L^{0}$-complete $RM$ space with base $(\Omega,\mathcal{F},P)$ and $f: E \to \bar{L}^{0}(\mathcal{F})$ be a $\sigma$-stable, proper, lower bounded and $\mathcal{T}_{c}$-lower semicontinuous function. If $T: E \to E$ satisfies $f(T(u)) + d(T(u), u) \leq f(u)$ for any $u \in E$, then $T$ has a fixed point.
\end{corollary}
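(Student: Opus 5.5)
The plan is to reduce Corollary~\ref{corollary4.10} to Theorem~\ref{theorem4.9} by converting each hypothesis stated in terms of $\mathcal{T}_{c}$ into the corresponding hypothesis stated in terms of $\mathcal{T}_{\varepsilon,\lambda}$. No new variational argument should be needed, since the whole content is the equivalence between the two kinds of completeness and lower semicontinuity for $\sigma$-stable objects. Concretely, we verify that $(E,d)$ and $f$ satisfy the hypotheses of Theorem~\ref{theorem4.7}, which are the standing hypotheses of Theorems~\ref{theorem4.8} and \ref{theorem4.9}, and then apply Theorem~\ref{theorem4.9} directly to $T$.

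\emph{Completeness.} Since $E$ is $\sigma$-stable, it is in particular finitely stable: given $x_1,x_2$ and $A\in\mathcal{F}$, take the countable concatenation along the partition $\{A,A^{c},\emptyset,\dots\}$. Proposition~\ref{proposition2.15}(2) then says that $(E,d)$ is $(\varepsilon,\lambda)$-complete iff it is both $\sigma$-stable and $L^{0}$-complete. Both of these hold by assumption, so $(E,d)$ is $(\varepsilon,\lambda)$-complete. Alternatively, Proposition~\ref{proposition2.15}(1) gives this directly from $L^{0}$-completeness alone.

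\emph{Lower semicontinuity.} Because $E$ is $\sigma$-stable and $f$ is $\sigma$-stable and proper, Corollary~\ref{corollary 4.4} shows that $\mathcal{T}_{c}$-lower semicontinuity of $f$ is equivalent to $\mathcal{T}_{\varepsilon,\lambda}$-lower semicontinuity. The underlying point is that $epi(f)$ is a $\sigma$-stable subset of $E\times L^{0}(\mathcal{F})$, so Lemma~\ref{lemma4.3} identifies its closures in the two topologies. If one wants to check this stability explicitly, take $(x_n,r_n)\in epi(f)$ and a countable partition $\{A_n\}$. Then
\[
f\Bigl(\sum_n\tilde I_{A_n}x_n\Bigr)=\sum_n\tilde I_{A_n}f(x_n)\le\sum_n\tilde I_{A_n}r_n ,
\]
which shows that the concatenated pair again lies in $epi(f)$. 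Properness and lower boundedness carry over unchanged, since they do not refer to any topology.

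At this point $(E,d)$ is $(\varepsilon,\lambda)$-complete and $f$ is proper, $\mathcal{T}_{\varepsilon,\lambda}$-lower semicontinuous and lower bounded, which are exactly the hypotheses of Theorem~\ref{theorem4.9}. The Caristi condition $f(T(u))+d(T(u),u)\le f(u)$ is assumed verbatim, so Theorem~\ref{theorem4.9} yields a fixed point of $T$. No stability or continuity of $T$ is required.

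The only step needing care is the transfer of lower semicontinuity. There we must confirm that the $\sigma$-stability of $epi(f)$ makes sense in the product random metric $d'$ of Remark~\ref{remark4.2}, so that Lemma~\ref{lemma4.3} applies, and that the convention $0\cdot(+\infty)=0$ causes no trouble on the set where $f(x_n)=+\infty$. This is harmless because $r_n$ is finite, so $f(x_n)\le r_n$ forces $f(x_n)<+\infty$. Everything else is a direct citation of earlier results.
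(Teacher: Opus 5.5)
Your proof is correct and is essentially the paper's own argument: reduce to Theorem~\ref{theorem4.9}, transferring lower semicontinuity by Corollary~\ref{corollary 4.4} and completeness by Proposition~\ref{proposition2.15}; the only difference is that the paper cites part (1) of Proposition~\ref{proposition2.15}, while your main route uses part (2). Your route through part (2), which genuinely uses the $\sigma$-stability of $E$, is the sound one, and you should drop the aside claiming that part (1) gives $(\varepsilon,\lambda)$-completeness from $L^{0}$-completeness alone: as printed, part (1) fails without stability (on the Lebesgue unit interval, the finitely-valued elements of $L^{0}(\mathcal{F})$ form a finitely stable, $\mathcal{T}_{c}$-closed and hence $L^{0}$-complete RM space that is not $(\varepsilon,\lambda)$-complete), and the implication that holds in general is the reverse one, namely that $(\varepsilon,\lambda)$-completeness implies $L^{0}$-completeness.
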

\begin{proof} It is clear by Theorem~\ref{theorem4.9}, Corollary~\ref{corollary 4.4} and (1) of  Proposition~\ref{proposition2.15}.
\end{proof}
In a series of applications of Theorem~\ref{theorem4.7}, see \cite{GY2012,GZWYYZ2017} for details, we want to locate $x_{0}$ and strengthen the inequality in (2), which eventually leads us to the precise Ekeland variational principle on a complete $RM$ space by means of Corollary~\ref{corollary4.6}.

\begin{theorem}[{\cite[Theorem~3.6]{GWYZ2020}}]\label{theorem4.11}
	Let $(E,d)$ be a $\sigma$-stable, $(\varepsilon,\lambda)$-complete $RM$ space with base $(\Omega,\mathcal{F},P)$, $\varepsilon \in L_{++}^0(\mathcal{F})$ and $f: E \to \bar{L}^{0}(\mathcal{F})$ proper, $\sigma$-stable, $\mathcal{T}_{\varepsilon,\lambda}$-lower semicontinuous and bounded from below. Then, for any given $x_{0}$ satisfying $f(x_{0}) \leq \bigwedge\{f(x): x \in E\} + \varepsilon$ and any given $\alpha \in L_{++}^{0}(\mathcal{F})$, there exists $z \in E$ such that the following hold:
	\begin{enumerate}[{\rm(1)}]
		\item $f(z) \leq f(x_{0}) - \alpha d(z, x_{0})$.
		\item $d(z, x_{0}) \leq \alpha^{-1} \varepsilon$.
		\item $f(x) > f(z) - \alpha d(x, z)$ for $x \neq z$ (where ``$>$'' means ``$\geq$'' and ``$\neq$'').
	\end{enumerate}
\end{theorem}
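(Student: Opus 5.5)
The plan is to prove Theorem~\ref{theorem4.11} by a randomized Bishop--Phelps/Brézis--Browder construction, run in $\mathcal{T}_c$ with $\sigma$-stable near-minimizers, rather than through Theorem~\ref{theorem4.7}. By Proposition~\ref{proposition2.15}(2), $E$ is $L^0$-complete, since it is $\sigma$-stable and $(\varepsilon,\lambda)$-complete (a $\sigma$-stable set is finitely stable). By Corollary~\ref{corollary 4.4}, $f$ is $\mathcal{T}_c$-lower semicontinuous. Replacing $d$ by the random metric $\alpha d$ changes neither stability, nor lower semicontinuity, nor $L^0$-completeness, since $\alpha\in L^0_{++}(\mathcal{F})$. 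So we may assume $\alpha=1$ and aim for $d(z,x_0)\le\varepsilon$ in (2).

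For $x\in E$ define
$$S(x)=\{y\in E: f(y)\le f(x)-d(x,y)\}.$$
Each $S(x)$ contains $x$, and it is $\sigma$-stable because $f$ and $d$ are $\sigma$-stable. It is $\mathcal{T}_c$-closed by the lower semicontinuity of $f$ and the continuity of $d$, and $y\in S(x)$ implies $S(y)\subset S(x)$ by the triangle inequality.

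Starting from $x_0$, choose $x_{n+1}\in S(x_n)$ such that
$$f(x_{n+1})<\bigwedge\{f(y):y\in S(x_n)\}+2^{-n-1}\varepsilon \quad \text{on }\Omega.$$
Such a choice exists by Corollary~\ref{corollary4.6} applied to the $\sigma$-stable space $S(x_n)$ and the restriction of $f$. This is exactly where $\sigma$-stability is indispensable: without it one only gets near-infima on a set of positive probability, not on all of $\Omega$.

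Set $m_n=\bigwedge f(S(x_n))$. For $y\in S(x_{n+1})\subset S(x_n)$,
$$d(x_{n+1},y)\le f(x_{n+1})-f(y)\le f(x_{n+1})-m_n<2^{-n-1}\varepsilon.$$
Hence $D(S(x_{n+1}))\le 2^{-n}\varepsilon$ on $\Omega$. The nested sets $S(x_n)$ therefore have random diameters tending to $0$ uniformly with respect to $\varepsilon$. So $\{x_n\}$ is $\mathcal{T}_c$-Cauchy, and by $L^0$-completeness it converges to some $z$. Since each $S(x_n)$ is $\mathcal{T}_c$-closed, we get $\bigcap_n S(x_n)=\{z\}$.

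Now verify the three conclusions.
\begin{enumerate}[(1)]
\item Since $z\in S(x_0)$, we have $f(z)\le f(x_0)-d(z,x_0)$.
\item From $f(z)\ge\bigwedge f(E)\ge f(x_0)-\varepsilon$ and (1), we get $d(z,x_0)\le f(x_0)-f(z)\le\varepsilon$.
\item The intersection argument only gives $f(x)\not\le f(z)-d(x,z)$, i.e.\ the inequality fails on some set of positive probability. The strict "$>$ everywhere" form needs more, and this is the main obstacle.
\end{enumerate}

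For (3), suppose $x\ne z$ but $f(x)\le f(z)-d(x,z)$ on some $A$ with $P(A)>0$. Form the concatenation $x'=\tilde I_Ax+\tilde I_{A^c}z$, which lies in $E$ by $\sigma$-stability. Since $f$ and $d$ are $\sigma$-stable, $f(x')\le f(z)-d(x',z)$ on all of $\Omega$, so $x'\in S(z)\subset\bigcap_nS(x_n)=\{z\}$. Thus $x'=z$, forcing $\tilde I_A d(x,z)=0$. So on every set where the Ekeland inequality reverses, $x$ coincides with $z$ and the inequality holds with equality.

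This gives $f(x)\ge f(z)-d(x,z)$ everywhere, and on $A$ the two sides are equal with $d(x,z)=0$ there. On the set where $x\ne z$, the relation is strict. I expect the bookkeeping here, reconciling "$>$" as "$\ge$ and $\ne$" with the concatenation argument, to be the delicate point. Undoing the rescaling $\alpha d\to d$ then yields the stated inequalities with $\alpha$ and the bound $\alpha^{-1}\varepsilon$.
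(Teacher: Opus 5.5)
Your nested-set construction, your use of Corollary~\ref{corollary4.6} on the $\sigma$-stable sets $S(x_n)$, and your concatenation argument for (3) are sound. The convergence step, however, fails as written. From $D(S(x_{n+1}))\le 2^{-n}\varepsilon$ you infer that $\{x_n,n\in\mathbb{N}\}$ is $\mathcal{T}_c$-Cauchy. That requires, for every $\eta\in L^0_{++}(\mathcal{F})$, a deterministic $N$ with $d(x_m,x_k)<\eta$ on $\Omega$ for all $m,k\ge N$. Your bound $2^{-N+1}\varepsilon$ gives this only when $\varepsilon/\eta$ is essentially bounded.

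This is not a technicality. No sequence $\{\varepsilon_k\}$ in $L^0_{++}(\mathcal{F})$ tends to $0$ in $\mathcal{T}_c$ unless $B_{\mathcal{F}}$ is finite. Indeed, for a countable partition $\{B_k\}$ of $\Omega$ with $P(B_k)>0$, the element $\eta=\sum_k\tilde I_{B_k}\varepsilon_k/2$ satisfies $\varepsilon_k>\eta$ on $B_k$ for every $k$. This is exactly the phenomenon noted in Remark~\ref{remark3.8}.

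Here is a concrete counterexample. On $(0,1]$ with Lebesgue measure, take $E=\{x\in L^0(\mathcal{F}):0\le x\le 1\}$, $d(x,y)=|x-y|$, $f(x)=-2x$, $\varepsilon=1$ and $x_0=1/2$. Then $S(x)=\{y\in E: y\ge x\}$, and $x_{n+1}=1-2^{-n-3}$ is an admissible choice in your scheme. Yet $\{x_n\}$ is not $\mathcal{T}_c$-Cauchy: take $\eta$ to be the class of $\omega\mapsto\omega$. So $L^0$-completeness, obtained via Proposition~\ref{proposition2.15}(2), does not produce $z$.

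The repair is to stay in $\mathcal{T}_{\varepsilon,\lambda}$, where the hypotheses already live.
For $m\ge n\ge 1$ you have $d(x_m,x_n)\le 2^{-n}\varepsilon$, and $\varepsilon$ is a.s. finite, so $\{x_n\}$ is Cauchy in probability. By the assumed $(\varepsilon,\lambda)$-completeness (which is invariant under $d\mapsto\alpha d$), it converges in $\mathcal{T}_{\varepsilon,\lambda}$ to some $z$. Each $S(x_n)$ is $\mathcal{T}_{\varepsilon,\lambda}$-closed, either directly from the $\mathcal{T}_{\varepsilon,\lambda}$-lower semicontinuity of $f$ and the continuity of $d$, or from your $\mathcal{T}_c$-closedness plus $\sigma$-stability via Lemma~\ref{lemma4.3}. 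Hence $z\in\bigcap_n S(x_n)=\{z\}$, and the rest goes through. Proposition~\ref{proposition2.15} and Corollary~\ref{corollary 4.4} then become unnecessary.

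If you want a genuinely $\mathcal{T}_c$ argument, you must pass to the $\sigma$-stable net $\{x_n,n\in L^0(\mathcal{F},\mathbb{N})\}$ of concatenations, as in Theorem~\ref{theorem3.7}. There you choose a random index $N$ with $2^{-N}\varepsilon<\eta$ on $\Omega$.

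Your proof of (3) needs no further bookkeeping. Take $A=(f(x)\le f(z)-d(x,z))$. The concatenation gives $d(x,z)=0$ and $f(x)=f(z)$ a.s. on $A$. Hence $f(x)\ge f(z)-d(x,z)$ on $\Omega$, with strict inequality on $(d(x,z)>0)$, which has positive probability when $x\ne z$.

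With this fix, your argument is a valid, self-contained alternative to the route the survey indicates, namely Theorem~\ref{theorem4.7} sharpened by means of Corollary~\ref{corollary4.6}. You prove the weak principle and its strengthening in a single Br\'ezis--Browder pass. The $\sigma$-stability is used in exactly two places: to get near-infima on all of $\Omega$, and to glue in (3).
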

\par
\begin{remark}\label{remark4.12}
	By replacing only the $(\varepsilon,\lambda)$-completeness of $E$ and the $\mathcal{T}_{\varepsilon,\lambda}$-lower semicontinuity of $f$ in Theorem~\ref{theorem4.11} with the $L^{0}$-completeness of $E$ and the $\mathcal{T}_{c}$-lower semicontinuity of $f$, then one can see that Theorem \ref{theorem4.11} still holds, which is exactly the $\mathcal{T}_{c}$-version of Theorem \ref{theorem4.11}.
\end{remark}
\section{The Browder--G\"ohde--Kirk fixed point theorem in complete $RN$ modules}\label{section5}


The classical Browder--G\"ohde--Kirk fixed point 
theorem in Banach spaces \cite{Kirk1965} involves two 
ingredients: one is the concept of normal structure, 
and the other is that of weak compactness for a 
closed convex subset in a Banach space. It is not 
very difficult to generalize normal structure to a 
complete $RN$ module, it is, however, another matter, to generalize the concept of weak compactness, since an $RN$ module is essentially non-locally convex under the 
$(\varepsilon,\lambda)$-topology (even it does not 
make sense to speak of weak compactness in the case 
of an $RN$ module). Fortunately, motivated by 
\v{Z}itkovi\'{c}'s work on convex compactness \cite{Z2010}, 
we have develop an elegant theory of $L^{0}$-convex 
compactness in \cite{GZWW2021} by making full use of the 
theory of random conjugate spaces. Besides, by means 
of  geometry of $RN$ modules established in \cite{GZ2010}
we can also give a thorough treatment of random 
normal structure in \cite{GZWG2021}, where we had finished 
a perfect extension of the classical 
Browder--G\"ohde--Kirk fixed point theorem to a 
$\mathcal{T}_{\varepsilon,\lambda}$-complete $RN$ 
module. Further, in \cite{GZWY2020} we characterized when 
$L^{0}(\mathcal{F},V)$ has random normal structure and 
$L^{0}$-convex compactness for a closed convex subset 
$V$ of a Banach space, which enables us to give a 
thorough treatment of the random fixed point problem for a random nonexpansive operator.
\par	
For any two elements $\xi$ and $\eta$ in $\bar{L}^{0}(\mathcal{F})$, let $\xi^{0}$ and $\eta^{0}$ be arbitrarily chosen representatives of $\xi$ and $\eta$, respectively, and $[\xi>\eta]$ denote the equivalence class of $\{\omega\in\Omega: \xi^{0}(\omega)>\eta^{0}(\omega)\}$. Besides, we also use $(\xi>\eta)$ for some representative of $[\xi>\eta]$. Although $(\xi>\eta)$ may depend on a particular choice of $\xi^0$ and $\eta^0$, all the propositions in which $(\xi>\eta)$ is employed are independent of the particular choice of $(\xi>\eta)$ and hence no confusion may occur!
\par		
\begin{definition}[{\cite[Definition 3.3]{GZWG2021}}]\label{definition5.1}
Let $(E,\|\cdot\|)$ be a $\mathcal{T}_{\varepsilon,\lambda}$-complete $RN$ module over $\mathbb{K}$ with base $(\Omega,\mathcal{F},P)$. A $\mathcal{T}_{\varepsilon,\lambda}$-closed $L^{0}$-convex subset $G$ of $E$ is said to have random normal structure if, for each a.s. bounded, $\mathcal{T}_{\varepsilon,\lambda}$-closed and $L^0$-convex subset $H$ of $G$ such that $D(H):= \bigvee\{\|x-y\|: x,y \in H\}$ (called the random diameter of $H$) $>0$, there exists a nondiametral point $h$ of $H$, namely, $\bigvee\{\|h-x\|: x \in H\} < D(H)$ on $(D(H) > 0)$.
\end{definition}
\par
To study random normal structure, let 
$(E,\|\cdot\|)$ be a 
$\mathcal{T}_{\varepsilon,\lambda}$-complete $RN$ 
module over $\mathbb{K}$ with base 
$(\Omega,\mathcal{F},P)$ and $\xi = \bigvee\{\|x\|: x 
\in E\}$. Since $E$ is an 
$L^{0}(\mathcal{F},\mathbb{K})$-module, for an 
arbitrarily chosen representative $\xi^{0}$ of $\xi$, 
one can easily see $P\{\omega \in \Omega: 
\xi^{0}(\omega) = 0 \text{ or } +\infty\} = 1$, 
further let $supp(E) = \{\omega \in \Omega: 
\xi^{0}(\omega) = +\infty\}$, called the 
support of $E$ (it is unique a.s.). If $P(supp(E)) = 
1$, then $E$ is said to have full support. 
Clearly, for each Banach space over $\mathbb{K}$, 
$L^{0}(\mathcal{F},B)$ always has full support. When we study geometry of $RN$ modules we 
always assume, without loss of generality, assume 
that a $\mathcal{T}_{\varepsilon,\lambda}$-complete 
$RN$ module in question has full support, since a
general case all can be converted to the special 
case, see \cite[Lemma~3.4]{GZWG2021} for details.
\par
Geometry of $RN$ modules was started by Guo and Zeng in \cite{GZ2010,GZ2012}, let us first recall the definition of random uniform convexity as follows, we employ the following notation:
	$$\small{\varepsilon_{\mathcal{F}}[0,2]=\{\varepsilon\in 
		L^{0}_{++}(\mathcal{F})~|~\mbox{there exists a positive 
			number}~\lambda~\mbox{such that}~\lambda \leq
		\varepsilon \leq 2\}.}$$
	$$\small{\delta_{\mathcal{F}}[0,1]=\{\delta\in 
		L^{0}_{++}(\mathcal{F})~|~\mbox{there exists a positive 
			number}~ \eta~\mbox{such that}~\eta \leq \delta \leq
		1\}.}$$
\par
For a $\mathcal{T}_{\varepsilon,\lambda}$-complete $RN$ module $(E,\|\cdot\|)$, $A_{x} = (\|x\| > 0)$ for any $x \in E$, and $B_{x,y} = A_{x} \cap A_{y} \cap A_{x-y}$ for any $x$ and $y$ in $E$.
\par	
\begin{definition}[{\cite{G2010, GZWG2021}}]\label{definition5.2}
Let $(E, \|\cdot\|)$ be a  $\mathcal{T}_{\varepsilon,\lambda}$-complete $RN$ module over $\mathbb{K}$ with base $(\Omega, \mathcal{F},P)$. $E$ is said to be random uniformly convex if for each $\varepsilon\in \varepsilon_{\mathcal{F}}[0,2]$ there exists  $\delta\in \delta_{\mathcal{F}}[0,1]$ such that $\|x-y\|\geq \varepsilon $ on $D$ always implies $\|x+y\|\leq 2(1-\delta)$ on $D$ for any $x$, $y$ $\in$ $U(1)$ and any $D\in \mathcal{F}$ with $D\subset B_{x, y}$ and $P(D)>0$, where $U(1)=\{z\in E~|~\|z\|\leq 1\}$, called the random closed unit ball of $E$.
\end{definition}
\par

Let $p \in [1,+\infty)$ and $(E,\|\cdot\|)$ be a $\mathcal{T}_{\varepsilon,\lambda}$-complete $RN$ module over $\mathbb{K}$ with base $(\Omega,\mathcal{F},P)$. Further, define $\|\cdot\|_{p}: E \rightarrow [0,+\infty]$ by $\|x\|_{p} = (\int_{\Omega}\|x\|^{p}dP)^{\frac{1}{p}}$ for any $x \in E$, $L^{p}(E) = \{ x \in E: \|x\|_{p} < +\infty \}$, then $(L^{p}(E),\|\cdot\|_{p})$ is a Banach space. In \cite{GMT2024} we  proved that all the $L^{p}$-normed $L^{\infty}(\mathcal{F})$-modules introduced in \cite{G2018} have the form $L^{p}(E)$ for some $\mathcal{T}_{\varepsilon,\lambda}$-complete $RN$ module $(E,\|\cdot\|)$.
\par	
One of the main results of \cite{GZ2010,GZ2012} can now be summarized as Proposition~\ref{proposition5.3} below.
\par			
\begin{proposition}[{\cite{GZ2010,GZ2012}}]\label{proposition5.3}
A $\mathcal{T}_{\varepsilon,\lambda}$-complete $RN$  module $(E,\|\cdot\|)$ with base $(\Omega,\mathcal{F}, P)$ is random uniformly convex iff $(L^{p}(E),\|\cdot\|_{p})$ is uniformly convex for some $p\in (1,+\infty)$. Specially, a Banach space $B$ is uniformly convex iff $L^{0}(\mathcal{F},B)$ is random uniformly convex.
\end{proposition}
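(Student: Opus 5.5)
We prove the two implications of the first assertion separately and then derive the statement about $B$ from it.

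\textbf{From uniform convexity of $L^{p}(E)$ to random uniform convexity of $E$.} This is the easy direction, done by localization. Let $\delta_{p}(\cdot)$ be the modulus of convexity of $(L^{p}(E),\|\cdot\|_{p})$, and let $\varepsilon\in\varepsilon_{\mathcal{F}}[0,2]$ with $\varepsilon\geq\lambda$ for a constant $\lambda>0$. We will show that the \emph{constant} $\delta:=\min\{\delta_{p}(\lambda)/2,1\}\in\delta_{\mathcal{F}}[0,1]$ works.

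Suppose not. Then there are $x,y\in U(1)$ and $D\subset B_{x,y}$ with $P(D)>0$ such that $\|x-y\|\geq\varepsilon\geq\lambda$ on $D$. Moreover, for some $D'\subset D$ with $P(D')>0$ we have $\|x+y\|>2(1-\delta)$ on $D'$. Put
$$u=P(D')^{-1/p}\tilde I_{D'}x,\qquad v=P(D')^{-1/p}\tilde I_{D'}y .$$
Since $\|x\|,\|y\|\leq 1$, we get $\|u\|_{p},\|v\|_{p}\leq 1$. Integrating the pointwise bounds over $D'$ gives $\|u-v\|_{p}\geq\lambda$ and $\|u+v\|_{p}>2(1-\delta)\geq 2(1-\delta_{p}(\lambda))$. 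This contradicts the uniform convexity of $L^{p}(E)$. No full-support assumption is needed here, since we only work on $D'\subset B_{x,y}$.

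\textbf{From random uniform convexity of $E$ to uniform convexity of $L^{p}(E)$, for every $p\in(1,+\infty)$.} Here we imitate the classical Day--McShane argument for Bochner spaces. The proof has two steps.

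First, we derive a homogeneous pointwise inequality. Take the constant $\varepsilon=\eta_0$ and the corresponding $\delta\in\delta_{\mathcal{F}}[0,1]$, with $\delta\geq\eta$ for a constant $\eta>0$. Let $m=\|x\|\vee\|y\|$. On the set $(m>0)$, normalize by $m^{-1}$ using the $L^{0}$-module structure, and apply the definition on $D=(\|x-y\|\geq\eta_0 m)\cap B_{x,y}$. Combining this with convexity of $t\mapsto t^{p}$, we obtain a constant $c=c(\eta_0,\eta,p)>0$ such that
$$\Big\|\frac{x+y}{2}\Big\|^{p}\leq(1-c)\,\frac{\|x\|^{p}+\|y\|^{p}}{2}\quad\text{on }(\|x-y\|\geq\eta_0(\|x\|\vee\|y\|)).$$
The degenerate pieces, where $x$, $y$ or $x-y$ vanishes, are checked directly.

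Second, we integrate. Given $x,y$ in the unit ball of $L^{p}(E)$ with $\|x-y\|_{p}\geq\varepsilon_0$, choose $\eta_0=\varepsilon_0/4$ and split $\Omega$ into
$$G=(\|x-y\|\geq\eta_0(\|x\|\vee\|y\|))$$
and its complement. On $G^{c}$ we have $\|x-y\|^{p}<\eta_0^{p}(\|x\|^{p}+\|y\|^{p})$, so the integral of $\|x-y\|^{p}$ over $G^{c}$ is at most $2\eta_0^{p}$. Hence $\int_{G}(\|x\|^{p}+\|y\|^{p})\,dP\geq 2^{-p}\int_{G}\|x-y\|^{p}\,dP$ is bounded below by a constant depending only on $\varepsilon_0$ and $p$. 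On $G$ we use the pointwise gain, and on $G^{c}$ plain convexity. Together these give
$$\Big\|\frac{x+y}{2}\Big\|_{p}^{p}\leq 1-c'(\varepsilon_0,p)$$
for some constant $c'>0$.

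\textbf{The main obstacle.} This is the first step of the second direction. The random modulus must be turned into a pointwise $p$-homogeneous inequality whose gain is a \emph{deterministic} constant. This is exactly where the requirements $\varepsilon\in\varepsilon_{\mathcal{F}}[0,2]$ and $\delta\in\delta_{\mathcal{F}}[0,1]$, each bounded below by a positive number, are used. It is also where care with the sets $B_{x,y}$ and the normalization on $(m>0)$ is needed.

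\textbf{The statement about $B$.} Apply the first assertion to $E=L^{0}(\mathcal{F},B)$, for which $L^{p}(E)$ is the Bochner space $L^{p}(\mathcal{F},B)$. If $B$ is uniformly convex, the second direction above shows directly that $L^{p}(\mathcal{F},B)$ is uniformly convex, so $L^{0}(\mathcal{F},B)$ is random uniformly convex. Conversely, $B$ embeds isometrically into $L^{p}(\mathcal{F},B)$ as the constant functions, so uniform convexity of $L^{p}(\mathcal{F},B)$ passes to $B$. Alternatively, apply the definition of random uniform convexity with $D=\Omega$ to constant elements.
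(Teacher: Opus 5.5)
Your proof is correct. The survey itself gives no proof of this proposition. It is quoted from \cite{GZ2010,GZ2012}, where the characterization is reached with the help of an intermediate value theorem for $L^{0}(\mathcal{F},\mathbb{R})$-valued functions \cite{GZ2012}; that theorem is used to obtain convenient expressions for the random modulus of convexity. Your route avoids this machinery entirely.

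In the hard direction you proceed as follows:
\begin{enumerate}[(i)]
\item You only ever test the definition with a \emph{constant} $\varepsilon=\eta_0$, and use that the resulting $\delta$ dominates a deterministic $\eta>0$.
\item You normalize by $m^{-1}$ on $(m>0)$. Since $B_{m^{-1}x,m^{-1}y}=B_{x,y}$, the definition applies on $(\|x-y\|\geq\eta_0 m)\cap B_{x,y}$.
\item You get a deterministic gain $c$. This works because $t\mapsto \frac{\min\{(1-\eta)^{p},((1+t)/2)^{p}\}}{(1+t^{p})/2}$ is continuous and $<1$ on $[0,1]$, and the factor $2^{1-p}$ covers the pieces where $x$ or $y$ vanishes.
\item The integration step is then exactly the McShane-type argument for Bochner spaces.
\end{enumerate}
The localization $u=P(D')^{-1/p}\tilde I_{D'}x$ in the easy direction is likewise sound.

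Your approach gives a short, self-contained proof of the stronger statement: $L^{p}(E)$ is uniformly convex for \emph{every} $p\in(1,+\infty)$. As a by-product it shows that Definition~\ref{definition5.2} is unchanged if one only tests constant $\varepsilon$ and only asks for constant $\delta$. What the cited approach provides in addition is a description of the random modulus of convexity for genuinely random $\varepsilon$.

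Two minor points on the last part.
\begin{enumerate}[(i)]
\item For ``$B$ uniformly convex $\Rightarrow$ $L^{0}(\mathcal{F},B)$ random uniformly convex'', your second direction does not literally apply, since its hypothesis is random uniform convexity. What you mean is Day's classical theorem. It is simpler to verify Definition~\ref{definition5.2} pointwise with the constant $\delta=\delta_{B}(\lambda)$.
\item In your alternative argument with constant elements, the case $x=0$ or $y=0$ (where $\Omega\not\subset B_{x,y}$) must be treated separately. This is trivial, since then $\|x+y\|\leq 1$, so the modulus $\min\{\eta,1/2\}$ works.
\end{enumerate}
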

\par			
\begin{proposition}[{\cite[Theorem~3.6]{GZWG2021}}]\label{proposition5.4}
Every $\mathcal{T}_{\varepsilon,\lambda}$-closed $L^0$-convex subset of a $\mathcal{T}_{\varepsilon,\lambda}$-complete random uniformly convex $RN$ module has random normal structure.
\end{proposition}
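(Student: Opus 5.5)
Given $G$ a $\mathcal{T}_{\varepsilon,\lambda}$-closed $L^0$-convex subset and $H\subset G$ an a.s. bounded, closed, $L^0$-convex set with $D:=D(H)>0$, we must exhibit $h\in H$ with $\bigvee\{\|h-x\|:x\in H\}<D$ on $A:=(D>0)$. The plan follows the classical argument that uniform convexity yields normal structure via midpoints, with two randomized modifications. The $\delta$ supplied by Definition \ref{definition5.2} depends on $\omega$ only through an $L^0$ function. Moreover, a single midpoint only gains a strict inequality on part of $A$, so we must patch countably many candidates together using $\sigma$-stability. By Corollary \ref{corollary2.17}, $H$ is $\sigma$-stable and $\mathcal{T}_c$-complete, so concatenations of its elements stay in $H$.

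\textbf{Step 1: a pair of nearly diametral points.} Since $\{\|x-y\|:x,y\in H\}$ is $\sigma$-stable (being the image of the $\sigma$-stable set $H\times H$), Corollary \ref{corollary4.5} yields $x,y\in H$ with $\|x-y\|>D/2$ on $A$. In fact we can ask for $\|x-y\|\ge \varepsilon_0 D$ on $A$ with, say, $\varepsilon_0=1/2$. Set $h=\frac12(x+y)\in H$.

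\textbf{Step 2: the estimate for $h$.} For any $z\in H$, on $A$ normalize by $D$, taking $u=(x-z)/D$ and $v=(y-z)/D$ (with $0$ off $A$). Then $\|u\|,\|v\|\le 1$ on $A$, and $\|u-v\|=\|x-y\|/D\ge 1/2$ there. Applying Definition \ref{definition5.2} with $\varepsilon=\frac12$ on $A$ (extended by $2$ off $A$ so that $\varepsilon\in\varepsilon_{\mathcal F}[0,2]$) gives $\delta\in\delta_{\mathcal F}[0,1]$, independent of $z$, with $\|u+v\|\le 2(1-\delta)$ on the relevant set, i.e.\ $\|h-z\|\le(1-\delta)D$. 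We then take the supremum over $z$.

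\textbf{Step 3: the degenerate sets (main obstacle).} The definition only applies on $D\subset B_{u,v}$, i.e.\ where $u$, $v$, and $u-v$ are all nonzero, and the event $(\|u\|>0)$ depends on $z$. On the complement, say where $z=x$ locally, we have $\|h-z\|=\frac12\|x-y\|\le D/2$ directly, and similarly where $z=y$. So I would split $A$ into $(\|u\|>0)\cap(\|v\|>0)$ and its complement and apply the definition only on the first piece, which has positive measure or is empty. Since $\|u-v\|\ge1/2$ on all of $A$, the set $A_{u-v}$ covers $A$. This gives
\[
\bigvee_{z\in H}\|h-z\|\le \max\{1-\delta,\tfrac12\}\,D<D \quad\text{on } A,
\]
because $\delta>0$ on $\Omega$. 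Care is needed to ensure the definition can be applied with $D$ chosen as a subset of $B_{u,v}$ depending on $z$. If the full-support normalization (cf.\ \cite[Lemma~3.4]{GZWG2021}) is needed to make $\varepsilon_{\mathcal F}[0,2]$ fit, I would reduce to that case first. Off $A$ nothing needs to be checked.
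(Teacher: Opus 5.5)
Your argument is correct and follows essentially the same route as the proof behind this statement (the survey only quotes it from \cite[Theorem~3.6]{GZWG2021}). That route is the classical midpoint argument: the $\sigma$-stability of $\{\|x-y\|:x,y\in H\}$ and Corollary~\ref{corollary4.5} give a pair $x,y\in H$ with $\|x-y\|>D(H)/2$ on $(D(H)>0)$, and random uniform convexity with a $z$-independent $\delta$ then bounds $\|h-z\|$ by $\max\{1-\delta,\frac12\}D(H)$, with the sets where $z$ coincides with $x$ or $y$ handled directly, as you do (the full-support reduction is unnecessary, since the constant $\varepsilon=\frac12$ already lies in $\varepsilon_{\mathcal F}[0,2]$).
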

\par			
\begin{proposition}[{\cite[Theorems~2.16]{GZWY2020};\cite[Collary~3.7]{GZWG2021}}]\label{proposition5.5}
Let $V$ be a closed convex subset of a Banach space such that $V$ is a weakly compact set with normal structure. Then $L^{0}(\mathcal{F},V)$, as a $\mathcal{T}_{\varepsilon,\lambda}$-closed $L^0$-convex subset of $L^{0}(\mathcal{F},B)$, has random normal structure. Specially, $L^{0}(\mathcal{F},V)$ has random normal structure when $B$ is uniformly convex.
\end{proposition}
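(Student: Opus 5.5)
The plan is to reduce random normal structure of $L^{0}(\mathcal{F},V)$ to the normal structure of $V$, applied pointwise through measurable selections and representatives. Throughout, $V$ is weakly compact and convex with normal structure, and $B$ is the ambient Banach space. The set $L^{0}(\mathcal{F},V)=\{x\in L^{0}(\mathcal{F},B): x^{0}(\omega)\in V \text{ a.s.}\}$ is clearly $L^{0}$-convex, $\sigma$-stable and $\mathcal{T}_{\varepsilon,\lambda}$-closed. Now fix an a.s. bounded, $\mathcal{T}_{\varepsilon,\lambda}$-closed $L^{0}$-convex subset $H\subset L^{0}(\mathcal{F},V)$ with $D(H)>0$. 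By Corollary~\ref{corollary2.17}, $H$ is also $\sigma$-stable. The goal is to produce $h\in H$ with $\bigvee_{x\in H}\|h-x\|<D(H)$ on $(D(H)>0)$.

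\textbf{Step 1 (a candidate nondiametral point).} Since $H$ is $L^{0}$-convex and $\sigma$-stable, the family $\{\|h-x\|: x\in H\}$ is directed upward. So $r_{H}(h):=\bigvee_{x\in H}\|h-x\|$ is attained as an a.s. limit of a nondecreasing sequence, and $h\mapsto r_{H}(h)$ is $\sigma$-stable. Rather than mimicking the Banach-space proof directly, I would take a barycenter-type point. Choose a sequence $\{x_n\}\subset H$ and the averages $y_N=\frac1N\sum_{n\le N}x_n\in H$. The idea is that, for a.e. $\omega$, $x_n^{0}(\omega)$ lie in the weakly compact set $V$, and the classical argument (Brodskii--Milman) says that a nonconstant sequence in a set with normal structure has a point of its closed convex hull that is nondiametral for that hull.

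\textbf{Step 2 (reduction to a countable, pointwise problem).} Using Corollary~\ref{corollary4.5}, first pick a countable family $\{z_k\}\subset H$ with the following property: for a.e. $\omega$, the $\omega$-sections $K(\omega)=\overline{co}\{z_k^{0}(\omega)\}$ have diameter $D(H)^{0}(\omega)$. Moreover, for every $x\in H$, $\|h-x\|\le\bigvee_k\|h-z_k\|$ for $h$ ranging over $\sigma$-stable $L^{0}$-convex combinations. This density claim needs that $H$ equals the $\mathcal{T}_{\varepsilon,\lambda}$-closed $\sigma$-stable $L^{0}$-convex hull of such a countable family, modulo enlarging it to control sup over $H$. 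Then $K(\omega)\subset V$ is weakly compact and convex with positive diameter on $(D(H)>0)$. Normal structure of $V$ gives a point $c(\omega)\in K(\omega)$ with $\sup_{u\in K(\omega)}\|c(\omega)-u\|<\mathrm{diam}K(\omega)$.

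\textbf{Step 3 (measurable choice and membership in $H$).} The main obstacle is choosing $c(\omega)$ measurably, so that the resulting element $h$ lies in $H$, and doing so without a full measurable selection theorem. To handle this, I would restrict $c(\omega)$ to rational convex combinations of the $z_k^{0}(\omega)$. These are countably many strong random elements $w_j$, and each $w_j$ lies in $H$ by $L^{0}$-convexity. A rational combination can be chosen with radius strictly below the diameter, because the radius function $u\mapsto\sup_k\|u-z_k^{0}(\omega)\|$ is continuous and rational combinations are dense in $co\{z_k^{0}(\omega)\}$. Setting $A_j=\{\omega: w_j \text{ is the first index with } \sup_k\|w_j^{0}-z_k^{0}\|<D(H)^{0}\}$ gives a countable partition of $(D(H)>0)$. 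Then $h=\sum_j\tilde I_{A_j}w_j$, completed arbitrarily off that set, belongs to $H$ by $\sigma$-stability. Finally $r_H(h)=\bigvee_k\|h-z_k\|<D(H)$ on $(D(H)>0)$, by Step 2. I expect the delicate point to be the approximation claim in Step 2, which requires that $\bigvee_{x\in H}$ can be computed along a countable family. I would prove it via the upward-directedness and $\sigma$-stability of $\{\|h-x\|\}$, possibly refining $\{z_k\}$ to be $\mathcal{T}_{\varepsilon,\lambda}$-dense in a countably generated sub-hull of $H$. The uniformly convex special case follows because closed convex bounded subsets of a uniformly convex space are weakly compact (reflexivity) and have normal structure; alternatively it follows from Propositions~\ref{proposition5.3} and~\ref{proposition5.4}.
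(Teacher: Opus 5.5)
\textbf{Gap in Step 2.} Step 3 depends on this step, and as written it fails. You claim that $\|h-x\|\le\bigvee_k\|h-z_k\|$ for every $x\in H$ and every $h$ in the $\sigma$-stable $L^0$-convex hull of $\{z_k\}$. This is false for a family chosen only so that $\bigvee_{k,l}\|z_k-z_l\|=D(H)$.

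A counterexample already exists for trivial $\mathcal{F}$. In $B=(\mathbb{R}^2,\|\cdot\|_\infty)$ take $V=H=[0,1]^2$, which is compact and convex, hence weakly compact with normal structure. Take $z_1=(0,0)$ and $z_2=(1,0)$, so $\|z_1-z_2\|=1=D(H)$.
\begin{itemize}
\item Every rational combination of $z_1,z_2$ has the form $w=(t,0)$.
\item For $0<t<1$ such a $w$ passes your test, since $\max_k\|w-z_k\|=\max\{t,1-t\}<1$.
\item Yet every such $w$ is diametral for $H$, since $\sup_{x\in H}\|w-x\|=1=D(H)$.
\end{itemize}
So your first-index rule is forced to output a diametral point. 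In general, Steps 2--3 only produce a point that is nondiametral for the sub-hull generated by $\{z_k\}$. That says nothing about $R(H,h)=\bigvee_{x\in H}\|h-x\|$.

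Neither of your suggested repairs works:
\begin{itemize}
\item $H$ need not be the closed $\sigma$-stable $L^0$-convex hull of countably many elements. Take $\mathcal{F}$ trivial and $V=H$ the unit ball of $\ell^2(\Gamma)$ with $\Gamma$ uncountable.
\item $\mathcal{T}_{\varepsilon,\lambda}$-density of $\{z_k\}$ in a countably generated sub-hull does not control a supremum taken over all of $H$.
\end{itemize}
The same objection applies to the barycenter idea in Step 1.

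\textbf{Missing idea: a countable closing-off.} It is needed because the candidate points are built from the family itself.
\begin{itemize}
\item Start with a countable $S_0\subset H$ satisfying $\bigvee_{s,t\in S_0}\|s-t\|=D(H)$.
\item Given a countable $S_n\subset H$, let $Q_n$ be the countable set of convex combinations of elements of $S_n$ with constant rational coefficients; these lie in $H$.
\item For each $w\in Q_n$, adjoin a sequence $\{x^w_j\}\subset H$ with $\|w-x^w_j\|\uparrow R(H,w)$ a.s. Such a sequence exists because $\{\|w-x\|:x\in H\}$ is upward directed.
\item Put $S=\bigcup_nS_n$.
\end{itemize}
Then $R(H,w)=\bigvee_{s\in S}\|w-s\|$ for every rational convex combination $w$ of elements of $S$. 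From here your remaining steps go through verbatim: apply normal structure of $V$ pointwise to $\mathrm{co}\{s^0(\omega):s\in S\}$, whose diameter is $D(H)^0(\omega)$; approximate by rational combinations; and concatenate along the first-index partition. This gives $h\in H$ with $R(H,h)<D(H)$ on $(D(H)>0)$.

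\textbf{Alternative: argue by contradiction.} If no nondiametral point existed, an exhaustion argument using $\sigma$-stability would give $b\subset(D(H)>0)$ with $P(b)>0$ on which every point of $H$ is diametral. Choose $x_{n+1}\in H$ with $\|c_n-x_{n+1}\|>(1-n^{-2})D(H)$ on $b$, where $c_n=\frac1n\sum_{i\le n}x_i$. For a.e.\ $\omega\in b$ the representatives then form a diametral sequence in $V$, contradicting normal structure of $V$.

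Either way, only normal structure of $V$ is used, not weak compactness. Your treatment of the uniformly convex case via Propositions~\ref{proposition5.3} and~\ref{proposition5.4} is fine.
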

\par
						
\begin{definition}[{\cite{GZWW2021}}]\label{definition5.6}
A $\mathcal{T}_{\varepsilon,\lambda}$-closed $L^0$-convex subset $G$ of an $RN$ module $(E,\|\cdot\|)$ is said to be $L^{0}$-convexly compact if any family of $\mathcal{T}_{\varepsilon,\lambda}$-closed $L^0$-convex subsets of $G$ has a nonempty intersection whenever the family has the finite intersection property.
\end{definition}
\par				
Although we have known that a $\mathcal{T}_{\varepsilon,\lambda}$-closed $L^0$-convex subset is $L^{0}$-convexly compact iff it is also convexly compact in the sense of \cite{Z2010}, our formulation has an advantage that allows us to  characterize it by means of the theory of random conjugate spaces!
\par

\begin{theorem}[{\cite{GZWW2021}}]\label{theorem5.7}
	A $\mathcal{T}_{\varepsilon,\lambda}$-closed $L^{0}$-convex subset $G$ of a $\mathcal{T}_{\varepsilon,\lambda}$-complete $RN$ module $(E,\|\cdot\|)$ over $\mathbb{K}$ with base $(\Omega,\mathcal{F},P)$ is $L^{0}$-convexly compact iff, for each $f \in E^{*}$, there exists $g_{0} \in G$ such that $Re(f(g_{0})) = \max\{ Re(f(g)) : g \in G \}$, where $Re(f(g))$ stands for the real part of $f(g)$ for any given $g \in G$.
\end{theorem}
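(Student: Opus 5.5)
We prove Theorem~\ref{theorem5.7} as a random James-type theorem, splitting into the two implications.

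For necessity, fix $f\in E^{*}$ and put $\xi=\bigvee\{Re(f(g)): g\in G\}$. First we show $\xi\in L^{0}(\mathcal{F})$. Suppose $\xi=+\infty$ on some $A$ with $P(A)>0$. Since $G$ is $L^{0}$-convex and $\mathcal{T}_{\varepsilon,\lambda}$-closed in a complete module, Corollary~\ref{corollary2.17} makes $G$ $\sigma$-stable. Then $\{Re(f(g)): g\in G\}$ is $\sigma$-stable, so $\sigma$-stable concatenations give points $g_{n}\in G$ with $Re(f(g_{n}))\geq n$ on $A$. The sets $H_{n}=\{g\in G: Re(f(g))\geq n \text{ on } A\}$ are $\mathcal{T}_{\varepsilon,\lambda}$-closed, because $f$ is continuous, and $L^{0}$-convex. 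They decrease and are nonempty, so they have the finite intersection property, yet their intersection is empty. This contradicts $L^{0}$-convex compactness, hence $\xi\in L^{0}(\mathcal{F})$.

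Next, Corollary~\ref{corollary4.5} gives, for each $n$, a point $g_{n}\in G$ with $Re(f(g_{n}))>\xi-1/n$ on $\Omega$. Put $H_{n}=\{g\in G: Re(f(g))\geq \xi-1/n\}$. These are nonempty, $\mathcal{T}_{\varepsilon,\lambda}$-closed, $L^{0}$-convex and decreasing. By $L^{0}$-convex compactness their intersection contains some $g_{0}$, and then $Re(f(g_{0}))=\xi$. This direction is routine.

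Sufficiency is the main obstacle. Assume every $f\in E^{*}$ attains its real supremum on $G$. The plan mimics the James-theorem route to weak compactness, transferred to the random setting.

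Step 1. Reduce to $G$ a.s. bounded. If $\bigvee\{\|g\|:g\in G\}$ fails to be in $L^{0}$, we would construct, via the random Hahn--Banach theorem and $\sigma$-stability, some $f\in E^{*}$ with $\bigvee Re f(G)=+\infty$ on a set of positive measure. Such an $f$ cannot attain a maximum, a contradiction.

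Step 2. Reformulate $L^{0}$-convex compactness. Let $\{G_{\alpha}\}$ be a family of closed $L^{0}$-convex subsets of $G$ with the finite intersection property. Replacing the family by all finite intersections, and then by their $\sigma$-stable hulls (which stay closed and $L^{0}$-convex by Lemma~\ref{lemma4.3}), we may assume the family is directed downward and stable under countable concatenation. Closed $L^{0}$-convex sets are exactly intersections of random closed half-spaces $\{x: Re f(x)\leq r\}$ with $f\in E^{*}$, by the random separation theorem under $\mathcal{T}_{c}$ together with Lemma~\ref{lemma4.3}. So an empty intersection would say that $G$ is covered by the complements of countably generated half-space systems.

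Step 3. Derive a contradiction with attainment. We would carry out a random version of the Simons/Pryce sup-limsup argument. From the empty intersection we extract a sequence $f_{n}$ in the random unit ball of $E^{*}$ and build a random convex combination $f=\sum_{n}\lambda_{n}f_{n}$ with $\lambda_{n}\in L^{0}_{+}$. We choose it so that $Re f$ does not attain its supremum on $G$. Concatenation and Corollary~\ref{corollary4.5} replace the classical use of the infimum being attained almost.

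This sequential step is where we expect the real difficulty. The Banach-space James theorem is already deep, so the realistic route is to reduce to it. For $p\in(1,\infty)$, pass to the Banach space $L^{p}(E)$ and the a.s. bounded, hence $L^{p}$-bounded after a $\sigma$-stable normalization, set $L^{p}(G)$. Use that $(L^{p}(E))'\cong L^{q}(E^{*})$ from the random conjugate space theory. Attainment for each $f\in E^{*}$ then yields attainment on $L^{p}(G)$ for each functional in $L^{q}(E^{*})$, by integrating the pointwise maximizer given by concatenation. James' theorem makes $L^{p}(G)$ weakly compact. Weak compactness of $L^{p}(G)$ then gives the nonempty intersection: intersect the $L^{p}$-traces of the $G_{\alpha}$, which are weakly closed since they are convex and norm closed. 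Finally, recover the point in $G$ by $\sigma$-stability.
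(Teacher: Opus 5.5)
Your necessity argument is correct. For sufficiency, Step~2 and the random Simons--Pryce plan are never carried out and can be deleted; the route you finally settle on is sound. Scaling by $(1+\xi)^{-1}$, with $\xi=\bigvee\{\|g\|:g\in G\}$, puts $G$ inside $U(1)\subset L^{p}(E)$. Then $G$ and every $\mathcal{T}_{\varepsilon,\lambda}$-closed $L^{0}$-convex subset of it are convex and $\|\cdot\|_{p}$-closed. By Proposition~\ref{proposition7.5} and the bound $|F(g)|\leq\|F\|\in L^{q}$, each element of $(L^{p}(E))'$ attains its real supremum at the $L^{0}$-maximizer of $Re F$. James' theorem then gives weak compactness and hence the intersection property. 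The final ``recovery by $\sigma$-stability'' is unnecessary: just undo the scaling. The survey contains no proof of this theorem, only the citation \cite{GZWW2021}; your argument matches, in spirit, the random-conjugate-space route it attributes to that paper.

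The genuine gap is Step~1, on which the whole reduction rests: without $\xi\in L^{0}(\mathcal{F})$ there is no normalization into $L^{p}(E)$. The random Hahn--Banach theorem only gives, for each single $g$, a functional norming that $g$. Even combined with $\sigma$-stability, it does not produce one $f\in E^{*}$ unbounded on all of $G$. What you are asserting is a random uniform boundedness principle (the analogue of ``weakly bounded sets are bounded''), and it needs a Baire category argument in the complete metrizable space $(E^{*},\mathcal{T}_{\varepsilon,\lambda})$.

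Here is one way to fill it. Applying the hypothesis to $\pm f$ and $\pm if$ gives $\phi_{f}:=\bigvee\{|f(g)|:g\in G\}\in L^{0}_{+}(\mathcal{F})$ for every $f\in E^{*}$. Suppose $A=(\bigvee\{\|g\|:g\in G\}=+\infty)$ had $P(A)>0$. Translate so that $\theta\in G$. By concatenation there are $g_{n}\in G$ with $\|g_{n}\|\geq n$ on $A$; shrinking by $\lambda_{n}=\tilde{I}_{A}\,n/\|g_{n}\|\leq 1$ (set to $0$ off $A$) yields $y_{n}\in G$ with $\|y_{n}\|=n\tilde{I}_{A}$.

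Fix $0<\delta<P(A)/4$ and put $C_{k}=\{f\in E^{*}:P(\sup_{n}|f(y_{n})|>k)\leq\delta\}$. These sets are $\mathcal{T}_{\varepsilon,\lambda}$-closed (check with finitely many $n$, then let the number of terms grow). They cover $E^{*}$ because $\sup_{n}|f(y_{n})|\leq\phi_{f}$. By Baire, some $C_{k}$ contains $f_{0}+\{h:P(\|h\|<\varepsilon)>1-\lambda\}$, so $P(\sup_{n}|h(y_{n})|>2k)\leq 2\delta$ for every such $h$.

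Now take $h=\frac{\varepsilon}{2}h_{m}$, where $h_{m}\in E^{*}$ satisfies $\|h_{m}\|\leq 1$ and $h_{m}(y_{m})=\|y_{m}\|$ (random Hahn--Banach), and choose $m>4k/\varepsilon$. This $h$ lies in the neighbourhood, yet $|h(y_{m})|>2k$ on $A$, which contradicts $2\delta<P(A)$. Once this resonance step is supplied, or quoted from the literature, your proof goes through.
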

\par
Let $p \in(1,+\infty)$ and $(E,\|\cdot\|)$ be a $\mathcal{T}_{\varepsilon,\lambda}$-complete $RN$ module. It is well known from \cite{G2010} that $E$ is random reflexive iff $L^{p}(E)$ is reflexive. The following two corollaries are very useful.
\par
\begin{corollary}[{\cite{GZWW2021}}]\label{corollary5.8}
Every a.s. bounded $\mathcal{T}_{\varepsilon,\lambda}$-closed $L^{0}$-convex subset of a $\mathcal{T}_{\varepsilon,\lambda}$-complete $RN$ module $(E,\|\cdot\|)$ is $L^{0}$-convexly compact iff $E$ is random reflexive.
\end{corollary}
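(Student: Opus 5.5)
The plan is to reduce Corollary~\ref{corollary5.8} to Theorem~\ref{theorem5.7} together with the known equivalence ``$E$ is random reflexive iff $L^{p}(E)$ is reflexive'' for $p\in(1,+\infty)$, and to the characterization of random reflexivity from \cite{G2010}. The latter is a random James theorem: $E$ is random reflexive iff every $f\in E^{*}$ attains its random norm on the random closed unit ball $U(1)$, i.e.\ there is $x_{0}\in U(1)$ with $f(x_{0})=\|f\|$.

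\emph{Necessity.} Assume every a.s.\ bounded $\mathcal{T}_{\varepsilon,\lambda}$-closed $L^{0}$-convex subset is $L^{0}$-convexly compact. Then $U(1)$ is one such set. For each $f\in E^{*}$, Theorem~\ref{theorem5.7} gives $g_{0}\in U(1)$ with $Re(f(g_{0}))=\bigvee\{Re(f(g)):g\in U(1)\}$. Using the $L^{0}$-module structure (multiplying by $\overline{\mathrm{sgn}(f(g))}$, which keeps $U(1)$ invariant), this supremum equals $\|f\|$. So $f$ attains its random norm on $U(1)$, and the random James theorem yields random reflexivity.

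\emph{Sufficiency.} Let $E$ be random reflexive and $G$ be a.s.\ bounded, $\mathcal{T}_{\varepsilon,\lambda}$-closed and $L^{0}$-convex. By Theorem~\ref{theorem5.7}, it suffices to show that $Re f$ attains its supremum on $G$ for each $f\in E^{*}$. Since $G$ is a.s.\ bounded, there is $\xi\in L^{0}_{+}(\mathcal{F})$ with $\|g\|\le\xi$ for all $g\in G$. Take a countable partition $\{A_{n}\}$ with $\xi\le n$ on $A_{n}$. By Corollary~\ref{corollary2.17}, $G$ is $\sigma$-stable, so we can localize: $G=\sum_{n}\tilde I_{A_{n}}G$. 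On each $A_{n}$ the set $\tilde I_{A_{n}}G$ sits inside $L^{p}(E)$ as a norm-bounded convex subset. It is also norm-closed, because $L^{p}$-convergence implies convergence in probability. Reflexivity of $L^{p}(E)$ then makes it weakly compact.

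Next we pass to $L^{p}(E)$ functionals. Write $f_{n}=\tilde I_{A_{n}}\,\zeta_{n}f$, where $\zeta_{n}\in L^{0}_{++}$ is chosen so that $\zeta_{n}\|f\|\le 1$. Then $x\mapsto\int Re(f_{n}(x))\,dP$ is a continuous linear functional on $L^{p}(E)$, so by weak compactness it attains its maximum at some $g_{n}$ in $\tilde I_{A_{n}}G$.

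The key step is upgrading this integral maximum to a pointwise a.s.\ maximum. Suppose some $g\in G$ had $Re f(g)>Re f(g_{n})$ on a set $B\subset A_{n}$ of positive probability. Then the finite concatenation $\tilde I_{B}g+\tilde I_{B^{c}}g_{n}$ lies in $G$ by finite stability, and it strictly increases the integral, a contradiction. Hence $Re f(g_{n})\ge Re f(g)$ a.s.\ on $A_{n}$ for every $g\in G$. Finally, let $g_{0}=\sum_{n}\tilde I_{A_{n}}g_{n}\in G$, using $\sigma$-stability; this $g_{0}$ maximizes $Re f$ on $G$.

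The main obstacle is that sufficiency needs $L^{p}(E)$-weak compactness via the localization. There are two technical checks: $\tilde I_{A_{n}}G$ must genuinely be $\|\cdot\|_{p}$-closed, and the integral functional must be well defined (integrability is ensured by the scaling $\zeta_{n}$). Necessity rests entirely on the random James theorem from \cite{G2010}, which I take as known.
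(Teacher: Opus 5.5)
Your argument is correct. The survey gives no proof of this corollary and only cites \cite{GZWW2021}, but your argument follows the route its surrounding text points to: for one direction, Theorem~\ref{theorem5.7} applied to $U(1)$ plus the random James theorem; for the other, Theorem~\ref{theorem5.7} plus reflexivity of $L^{p}(E)$, with localization along $\{A_n\}$ and $\sigma$-stable gluing. The only fix needed is cosmetic: since $g_n$ lies in $\tilde{I}_{A_n}G$ rather than in $G$, perform the concatenations $\tilde{I}_{B}g+\tilde{I}_{B^{c}}h_n$ and $\sum_n\tilde{I}_{A_n}h_n$ with some $h_n\in G$ satisfying $\tilde{I}_{A_n}h_n=g_n$; the same gluing with a fixed element of $G$ on $A_n^{c}$ also gives the $\mathcal{T}_{\varepsilon,\lambda}$-closedness of $\tilde{I}_{A_n}G$ that your norm-closedness step relies on.
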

\par	
\begin{corollary}[{\cite{GZWY2020}}]\label{corollary5.9}
A closed convex subset $V$ of a Banach space $B$ is weakly compact iff $L^0(\mathcal{F},V)$, as a $\mathcal{T}_{\varepsilon,\lambda}$-closed $L^0$-convex subset of $L^0(\mathcal{F},B)$, is $L^0$-convex compactness.
\end{corollary}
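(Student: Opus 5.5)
We plan to use Theorem~\ref{theorem5.7}, which reduces $L^0$-convex compactness of $L^0(\mathcal{F},V)$ to the attainment of suprema of $\mathrm{Re}f$ for every $f\in L^0(\mathcal{F},B)^*$. We also use the classical James theorem for $V\subset B$. Set $G=L^0(\mathcal{F},V)$. It is $\mathcal{T}_{\varepsilon,\lambda}$-closed and $L^0$-convex in the $\mathcal{T}_{\varepsilon,\lambda}$-complete module $L^0(\mathcal{F},B)$. It is also $\sigma$-stable, by the concatenation argument given after Definition~\ref{definition2.20}.

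\emph{Necessity.} Suppose $G$ is $L^0$-convexly compact and fix $b^*\in B^*$. Define $f\in L^0(\mathcal{F},B)^*$ by $f(x)=$ the class of $b^*(x^0(\cdot))$. Then $|f(x)|\le\|b^*\|\,\|x\|$, so $f$ is a.s.\ bounded. Theorem~\ref{theorem5.7} gives $g_0\in G$ with $\mathrm{Re} f(g_0)=\bigvee\{\mathrm{Re} f(g):g\in G\}$. Since $G$ contains every constant class $\tilde v$ with $v\in V$, we get $\mathrm{Re}\, b^*(g_0^0(\omega))\ge \mathrm{Re}\, b^*(v)$ a.s.\ for each $v$. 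The difficulty is that the exceptional null set depends on $v$.

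To handle this, choose $v_n\in V$ with $\mathrm{Re}\,b^*(v_n)\uparrow s:=\sup_V \mathrm{Re}\,b^*$. Countably many null sets then give $\mathrm{Re}\,b^*(g_0^0(\omega))\ge s$ a.s. Since $g_0^0(\omega)\in V$, this forces equality, so any $\omega$ in the full-measure set yields a point of $V$ attaining $\sup_V\mathrm{Re}\,b^*$. Every functional therefore attains its supremum on the closed convex set $V$, and James' theorem gives weak compactness of $V$.

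\emph{Sufficiency.} Suppose $V$ is weakly compact and $f\in L^0(\mathcal{F},B)^*$. We must find a maximizer of $\mathrm{Re}f$ on $G$. Put $\xi=\bigvee\{\mathrm{Re} f(g):g\in G\}$. It lies in $L^0(\mathcal{F})$ because $V$ is bounded, so $\mathrm{Re}f(g)\le\|f\|\sup_V\|v\|$.

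Since $G$ is $\sigma$-stable and $f$ is $L^0$-linear, hence $\sigma$-stable, the set $\{\mathrm{Re}f(g)\}$ is $\sigma$-stable. Corollary~\ref{corollary4.5} then gives $g_n\in G$ with $\mathrm{Re}f(g_n)>\xi-1/n$ on $\Omega$. For each $n$ we replace $g_n$ by $L^0$-convex combinations of the tail $\{g_m:m\ge n\}$ whose values converge a.s. For this step, the first thing we will try is a randomized Koml\'os/Mazur-type lemma for weakly compact convex sets. Weak compactness of $V$ guarantees that pointwise forward convex combinations $h_n(\omega)\in\mathrm{conv}\{g_m^0(\omega):m\ge n\}$ can be chosen measurably so that they converge in norm a.s. One route is to pass to $L^p(\mathcal{F},V)$, which is weakly compact in $L^p(\mathcal{F},B)$ after truncating by $\sigma$-stability, and apply Mazur's lemma there.

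The resulting limit $g_0$ lies in $G$ because $G$ is closed, and $\mathrm{Re}f(h_n)\ge\xi-1/n$ persists under convex combination. Continuity of $f$ in $\mathcal{T}_{\varepsilon,\lambda}$ then yields $\mathrm{Re}f(g_0)=\xi$, and Theorem~\ref{theorem5.7} concludes.

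The main obstacle is this sufficiency step: extracting convergent convex combinations, which requires the reduction to $L^p$. The localization by $\sigma$-stability is needed so that $\|f\|$ can be assumed bounded, making $f$ restrict to a norm-continuous functional on $L^p$.
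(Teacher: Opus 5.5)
\textbf{Verdict: the sufficiency half has a genuine gap; necessity is fine.}

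Your necessity half is correct. The map $x\mapsto[b^*(x^0(\cdot))]$ lies in $L^0(\mathcal F,B)^*$, and Theorem~\ref{theorem5.7} gives a maximizer $g_0$. Comparing with countably many constants $\tilde v_n$ shows that almost every value $g_0^0(\omega)\in V$ maximizes $\mathrm{Re}\,b^*$ on $V$, which also rules out $s=+\infty$. James' theorem then finishes.

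The gap in sufficiency sits exactly at what you call the main obstacle. The set-up is fine: $\xi\in L^0(\mathcal F)$, and Corollary~\ref{corollary4.5} gives approximate maximizers $g_n$. The endgame is also fine: a.s.\ limits of forward convex combinations stay in $G$ and attain $\xi$. The middle step, however, is only asserted, in either of your two forms:
\begin{itemize}
\item that weak compactness of $V$ lets you choose forward convex combinations $h_n(\omega)\in\mathrm{conv}\{g_m^0(\omega):m\ge n\}$ measurably so that they converge in norm a.s.;
\item that $L^p(\mathcal F,V)$ is weakly compact in $L^p(\mathcal F,B)$.
\end{itemize}
Neither follows routinely from weak compactness of $V$. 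Pointwise, Eberlein--\v{S}mulian only gives an $\omega$-dependent subsequence of $(g_m^0(\omega))$. Nothing in your sketch produces common indices or measurable coefficients.

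The $L^p$ statement is Diestel's theorem on weak compactness in Lebesgue--Bochner spaces, and it is not a side lemma here. Combine Theorem~\ref{theorem5.7}, Proposition~\ref{proposition7.5} with $p=1$, $q=+\infty$, localization on $A_k=(k-1\le\|f\|<k)$, and James' theorem. This shows that weak compactness of $L^1(\mathcal F,V)$ is \emph{equivalent} to $L^0$-convex compactness of $L^0(\mathcal F,V)$. So your sufficiency argument restates the claim rather than proving it. Also, no truncation is needed: $V$ is bounded, so $L^0(\mathcal F,V)\subset L^\infty(\mathcal F,B)$.

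Closing the gap needs a genuinely new ingredient. One self-contained route:
\begin{enumerate}
\item The $g_n$ are essentially separably valued, so they take values in $V_0=V\cap B_0$ for some separable closed subspace $B_0$. Then $(V_0,\text{weak})$ is compact metrizable.
\item Every element of $L^1(\mathcal F,B_0)'$ is represented by a bounded weak$^*$-measurable $\Phi:\Omega\to B_0^*$.
\item The integrand $(\omega,v)\mapsto\mathrm{Re}\langle\Phi(\omega),v\rangle$ is Carath\'eodory on $\Omega\times V_0$. By the measurable maximum theorem and Pettis' theorem, it has a strongly measurable pointwise maximizer.
\item James' theorem then gives weak compactness of $L^1(\mathcal F,V_0)$. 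Eberlein--\v{S}mulian and Mazur produce your $h_k$.
\end{enumerate}
Alternatively, you can invoke Diestel's theorem outright. Or you can factor $V$ through a reflexive space $R$ via Davis--Figiel--Johnson--Pe\l{}czy\'nski and apply Corollary~\ref{corollary5.8} in the random reflexive module $L^0(\mathcal F,R)$; this requires checking strong measurability of the pulled-back elements.

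Once $G$ is known to be weakly compact in $L^1(\mathcal F,B)$, you need neither Corollary~\ref{corollary4.5} nor Mazur:
\begin{itemize}
\item Let $g_k\in G$ maximize the norm-continuous functional $x\mapsto\int\mathrm{Re}(\tilde I_{A_k}f(x))\,dP$ over $G$.
\item Finite stability of $G$ forces $\mathrm{Re}f(g_k)=\xi$ on $A_k$.
\item Paste $g_0=\sum_k\tilde I_{A_k}g_k$; then $\mathrm{Re}f(g_0)=\xi$.
\end{itemize}
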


Theorem \ref{theorem5.10} below is just the so called Browder--G\"ohde--Kirk fixed point theorem in complete $RN$ modules.

\par
\begin{theorem}[{\cite{GZWG2021}}]\label{theorem5.10}
Let $(E,\|\cdot\|)$ be a $\mathcal{T}_{\varepsilon,\lambda}$-complete $RN$ module over $\mathbb{K}$ with base $(\Omega,\mathcal{F},P)$ and $G\subset E$ be a $\mathcal{T}_{\varepsilon,\lambda}$-closed $L^0$-convexly compact subset with random normal structure. Then every nonexpansive mapping $T$ from $G$ to $G$ (namely, $\|T(x)-T(y)\| \leq \|x-y\|$ for any $x$ and $y$ in $G$) has a fixed point.
\end{theorem}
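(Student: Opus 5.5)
We follow Kirk's classical scheme, with weak compactness replaced by $L^0$-convex compactness (Definition \ref{definition5.6}) and normal structure replaced by random normal structure (Definition \ref{definition5.1}).

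\textbf{Step 1: a minimal invariant set.} Let $\mathcal{M}$ be the family of nonempty $\mathcal{T}_{\varepsilon,\lambda}$-closed $L^0$-convex subsets $K\subset G$ with $T(K)\subset K$. Order $\mathcal{M}$ by reverse inclusion. Every chain has the finite intersection property, so by $L^0$-convex compactness its intersection is nonempty. That intersection is again closed, $L^0$-convex and $T$-invariant. Zorn's lemma therefore gives a minimal $K\in\mathcal{M}$.

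Two preliminary facts about $K$ are needed.
\begin{itemize}
\item $K$ is a.s. bounded. Pick $x\in K$ and let $r=\bigvee_{y\in K}\|x-y\|$. If $r$ is not in $L^0$, we instead work with truncated $L^0$-balls intersected with $K$ and exploit minimality. A cleaner route is to first replace $G$ by a bounded invariant subset, obtained from the closed $L^0$-convex hull of an orbit, and check that this is possible.
\item $K=\overline{co}_{L^0}(T(K))$, the $\mathcal{T}_{\varepsilon,\lambda}$-closed $L^0$-convex hull. Indeed $K_1:=\overline{co}_{L^0}(T(K))\subset K$, so $T(K_1)\subset T(K)\subset K_1$, and minimality forces equality.
\end{itemize}

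\textbf{Step 2: the Chebyshev center set.} Assume $D(K)>0$. Define $r(x)=\bigvee_{y\in K}\|x-y\|$ and $r_0=\bigwedge_{x\in K} r(x)$, and set
\[
K_c=\{x\in K: r(x)=r_0\}.
\]
We must show the following.
\begin{itemize}
\item \emph{$K_c$ is nonempty.} The sets $\{x\in K: r(x)\le r_0+\varepsilon\}$, for $\varepsilon\in L^0_{++}(\mathcal{F})$, are closed and $L^0$-convex. They have the finite intersection property: $K$ is finitely stable and $r$ is $\sigma$-stable, so a Corollary \ref{corollary4.6}-type argument gives near-minimizers. Compactness then gives a point in their intersection.
\item \emph{$K_c$ is closed and $L^0$-convex.} Closedness uses lower semicontinuity of $r$; $L^0$-convexity follows because $r$ is $L^0$-convex.
\item \emph{$K_c$ is $T$-invariant.} Take $x\in K_c$. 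For $y\in K$, nonexpansiveness gives $\|Tx-Ty\|\le r_0$, so $T(K)\subset B(Tx,r_0)$. The ball $B(Tx,r_0)$ is closed and $L^0$-convex, hence it contains $\overline{co}_{L^0}(T(K))=K$. Thus $r(Tx)\le r_0$, i.e. $Tx\in K_c$.
\end{itemize}
By minimality, $K_c=K$, so $r(x)=r_0$ for all $x\in K$.

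Random normal structure applied to $K$ yields a point $h\in K$ with $r(h)<D(K)$ on $(D(K)>0)$. Hence $r_0<D(K)$ on that set. But $K_c=K$ gives $D(K)=\bigvee_{x\in K} r(x)=r_0$, a contradiction on a set of positive probability. Therefore $D(K)=0$, so $K$ is a singleton $\{x^*\}$, and $T(x^*)=x^*$.

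\textbf{Main obstacle.} The delicate points are the a.s. boundedness of $K$, which random normal structure requires, and the nonemptiness of $K_c$. The infimum $r_0$ is attained only in the lattice sense, so $\sigma$-stability of $K$ and of the function $r$ must be used. We would first verify that $r$ is $\sigma$-stable: $K$ is $\sigma$-stable by Corollary \ref{corollary2.17}, and $\|\cdot\|$ commutes with countable concatenations. This is what produces the $\varepsilon$-minimizers needed for the finite intersection property.

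For boundedness, we would first try to prove that the minimal set automatically has $D(K)\in L^0$. Failing that, we would restrict to $\tilde I_{A}$-pieces on which $D(K)$ is finite and handle $(D(K)=+\infty)$ by a localized version of the same argument.
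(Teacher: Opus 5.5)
Your overall scheme is the right one. It is Kirk's minimal-invariant-set argument: a minimal $T$-invariant closed $L^0$-convex $K$ via Zorn and $L^0$-convex compactness, $K=\overline{co}_{L^0}(T(K))$, invariance of the random Chebyshev center via closed $L^0$-balls, and a contradiction with a nondiametral point. The steps you carry out are correct once $r$ is $L^0$-valued: $K$ is $\sigma$-stable by Corollary~\ref{corollary2.17} and $r$ is $\sigma$-stable, so Corollary~\ref{corollary4.5} gives $\varepsilon$-minimizers and compactness gives $K_c\neq\emptyset$. You could even skip the nonemptiness of $K_c$ by using Kirk's original threshold. The set $K_0=\{x\in K: r(x)\le r(h)\}$ contains the nondiametral point $h$, and it is closed, $L^0$-convex and $T$-invariant by your ball argument. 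Hence $K_0=K$, and $D(K)\le r(h)<D(K)$ on $(D(K)>0)$.

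The genuine gap is a.s. boundedness, which you leave open and on which everything rests. Random normal structure (Definition~\ref{definition5.1}) produces nondiametral points only for a.s. bounded subsets, and $r_0\in L^0(\mathcal{F})$ is needed for $K_c$. None of your fallbacks works:
\begin{enumerate}[(i)]
\item $K\cap\{x:\|x-x_0\|\le n\}$ is not $T$-invariant, so minimality says nothing about it.
\item The closed $L^0$-convex hull of an orbit is neither known to be a.s. bounded nor $T$-invariant, since $T$ is not affine.
\item On $(D(K)=+\infty)$ random normal structure gives no information at all, so there is no ``localized version'' to run there.
\end{enumerate}
The missing idea is that $L^0$-convex compactness itself forces a.s. boundedness, the random analogue of ``weakly compact sets are bounded.''

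Here is one route. Take $f\in E^*$ and apply Theorem~\ref{theorem5.7} to $\pm f$ (and to $\pm i f$ if $\mathbb{K}=\mathbb{C}$); this gives $\bigvee_{g\in G}|f(g)|\in L^0(\mathcal{F})$. You can also see this directly. If $\bigvee_{g\in G}\mathrm{Re}\,f(g)=+\infty$ on some $A$ with $P(A)>0$, consider the sets $\{g\in G:\mathrm{Re}\,f(g)\ge n \text{ on } A\}$. They are nonempty by $\sigma$-stability of $G$, closed, $L^0$-convex and decreasing, yet their intersection is empty. Now apply the random resonance (uniform boundedness) theorem to $\{J(g): g\in G\}$, viewed as a.s. bounded random linear functionals on the $\mathcal{T}_{\varepsilon,\lambda}$-complete $RN$ module $E^*$. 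Together with $\|J(g)\|=\|g\|$ from the random Hahn--Banach theorem, this yields $\bigvee_{g\in G}\|g\|\in L^0(\mathcal{F})$. Hence $G$, and every $K\subset G$, is a.s. bounded, and your argument then closes.
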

\par		
Theorem~\ref{theorem5.10} had been used in the study of backward stochastic differential equations in \cite{GZWG2021}, besides it was also used in \cite{GZWY2020} for the study of random fixed point problems for random nonexpansive operators.
\par
	
\begin{corollary}\rm{(\cite{GZWY2020}\@)}.\label{corollary5.11}
Let $V$ be a weakly compact convex subset with normal structure of a Banach space $(B,\|\cdot\|)$ over $\mathbb{K}$ and $T: \Omega \times V \to V$ be a nonexpansive strong random operator. Then $T$ has a random fixed point, which is itself a strong random element from $\Omega$ to $V$.
\end{corollary}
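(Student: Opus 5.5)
The plan is to lift $T$ to $L^{0}(\mathcal{F},V)$ and apply Theorem~\ref{theorem5.10} there. Set $E=L^{0}(\mathcal{F},B)$ and $G=L^{0}(\mathcal{F},V)$, the set of equivalence classes of strong random elements with values in $V$. Since $E$ is a $\mathcal{T}_{\varepsilon,\lambda}$-complete $RN$ module, and $G$ is a $\mathcal{T}_{\varepsilon,\lambda}$-closed $L^{0}$-convex subset of it (closedness follows from $V$ being closed, $L^{0}$-convexity from $V$ being convex), we have the right ambient setting.

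The hypotheses of Theorem~\ref{theorem5.10} are then checked as follows. Because $V$ is weakly compact, Corollary~\ref{corollary5.9} makes $G$ $L^{0}$-convexly compact. Because $V$ is weakly compact with normal structure, Proposition~\ref{proposition5.5} gives $G$ random normal structure. It remains to define the lifted operator $\hat{T}:G\to G$ by letting $\hat{T}(x)$ be the class of $T(\cdot,x^{0}(\cdot))$, as in Proposition~\ref{proposition2.21}, and to check that it is well defined and nonexpansive.

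Nonexpansiveness of $T$ means $\|T(\omega,u)-T(\omega,v)\|\le\|u-v\|$ for every $\omega$, so $T$ is sample-continuous. Since it is also strong, $T(\cdot,x^{0}(\cdot))$ is a strong random element with values in $V$ (remark after Definition~\ref{definition2.19}). Changing $x^{0}$ on a null set changes $T(\cdot,x^{0}(\cdot))$ only on a null set, so $\hat{T}$ is well defined. Applying the pointwise nonexpansive inequality to representatives gives $\|\hat{T}(x)-\hat{T}(y)\|\le\|x-y\|$ in $L^{0}_{+}(\mathcal{F})$.

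Theorem~\ref{theorem5.10} then yields $x\in G$ with $\hat{T}(x)=x$. By (2) of Proposition~\ref{proposition2.21}, any representative $x^{0}$ is a random fixed point of $T$, and it is by construction a strong random element from $\Omega$ to $V$.

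The main obstacle is the measurability step. One must make sure that $T(\cdot,x^{0}(\cdot))$ is strongly measurable, which is what requires the strong random operator hypothesis together with sample-continuity, and that the lifted set $L^{0}(\mathcal{F},V)$ is indeed closed and $L^{0}$-convex. The cleanest route for the measurability is to approximate $x^{0}$ pointwise by simple random elements $x_{n}$ with values in $V$ (possible since $V$ is closed and convex), observe that each $T(\cdot,x_{n}(\cdot))$ is strong as a finite concatenation, and pass to the limit using continuity of $T(\omega,\cdot)$.
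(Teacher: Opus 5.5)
Your proposal is correct and follows the same route as the paper's justification in Remark 5.12: lift $T$ to $\hat{T}$ on $L^{0}(\mathcal{F},V)\subset L^{0}(\mathcal{F},B)$, use Corollary~\ref{corollary5.9} and Proposition~\ref{proposition5.5} to verify the hypotheses of Theorem~\ref{theorem5.10}, and recover the random fixed point via Proposition~\ref{proposition2.21}; your checks of well-definedness, nonexpansiveness and measurability of $\hat{T}$ fill in details the paper leaves implicit. One small point: the $V$-valued simple approximants of $x^{0}$ come directly from the definition of a strong random element into $V$, so closedness and convexity of $V$ are not what make that step possible.
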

\par
\noindent\textbf{Remark 5.12.}\label{remark4.12}
By considering the lifted operator $\hat{T}: L^{0}(\mathcal{F},V) \to L^{0}(\mathcal{F},V)$, then $\hat{T}$ satisfies Theorem~\ref{theorem5.10} by using Corollary~\ref{corollary5.9} and Proposition~~\ref{proposition5.5}, and thus we do not require the superfluous assumptions that $(\Omega,\mathcal{F},P)$ is complete and $V$ is separable, as made in \cite{Xu1993}. Besides, we do without any complicated arguments concerning the weak topology on $V$ in order to employ measurable selection theorems, as done in \cite{Xu1993}.

\par

\section{Common fixed point theorems for families of nonexpansive  or isometric mappings in complete $RN$ modules}\label{section6}
	
It is relatively easy to formulate and characterize random normal structure when we studied fixed point theorems for a single nonexpansive mapping in a complete $RN$ module. Once we come to the study of common fixed point theorems for families of nonexpansive mappings in a complete $RN$ module, we are forced to face the problem of how to introduce a fruitful notion of a random complete normal structure, since the historical research into the connection between normal structure and complete normal structure is a demanding work, as shown in \cite{BK1967, L1974a,L1974b,LLPV2003}. Thanks to the recent progress in stable set theory \cite{GMT2024}, we eventually solved the related problems in \cite{MTGX2025}.
\par			
\begin{definition}[{\cite[Definition~1.10]{MTGX2025}}]\label{definition6.1}
Let $(E,\|\cdot\|)$ be an $RN$ module over $\mathbb{K}$ with base $(\Omega,\mathcal{F},P)$, $G$ a nonempty subset of $E$, and $H$ an a.s. bounded nonempty subset of $E$. Define $R(H,\cdot): E \to L^0_+(\mathcal{F})$ by
$$
R(H,x) = \bigvee\{\|x-y\|: y \in H\}, \forall x \in E.
$$
Then $R(H,G):= \bigwedge\{R(H,x): x \in G\}$ and $C(H,G):= \{x \in G: R(H,x) = R(H,G)\}$ are called the random Chebyshev radius and random Chebyshev center of $H$ with respect to $G$, respectively. Specially, $R(H,H)$ and $C(H,H)$ are called the random Chebyshev radius and center of $H$, respectively.
\end{definition}
\par				
When $\mathcal{F}=\{\Omega,\emptyset\}$, an $RN$ module with base $(\Omega,\mathcal{F},P)$ automatically reduces to a normed space, and hence the notions of random Chebyshev radius and center to their classical prototypes \cite{BD1948}. However, Definition~6.2 below is a nontrivial generalization of that of complete normal structure \cite{BK1967}.
\par	
\begin{definition}[{\cite[Definition~1.12]{MTGX2025}}]\label{definition6.2}
Let $(E,\|\cdot\|)$ be a $\mathcal{T}_{\varepsilon,\lambda}$-complete $RN$ module over $\mathbb{K}$ with base $(\Omega,\mathcal{F},P)$. An a.s. bounded $\mathcal{T}_{\varepsilon,\lambda}$-closed $L^0$-convex subset $G$ of $E$ is said to have random complete normal structure if every $\mathcal{T}_{\varepsilon,\lambda}$-closed $L^0$-convex subset $W$ of $G$, with $W$ containing more than one point, satisfies the following condition:
\begin{enumerate}[{\rm(*)}]
\item For every decreasing consistent net $\{W_\alpha, \alpha \in \Lambda\}$ of $\sigma$-stable subsets of $W$, if $R(W_\alpha, W) = R(W, W)$, $\forall \alpha \in 
\Lambda$, then $[\bigcup_{\alpha \in \Lambda} C(W_{\alpha}, W) ]^{-}_{\varepsilon,\lambda}\neq \emptyset$ and for any measurable set $B \subset (D(W) > 0)$ with $P(B) > 0$, there 
exists $y_B \in W$ such that $\tilde{I}_{B} y_{B} \not \in \tilde{I}_{B} [\bigcup_{\alpha \in \Lambda}\subset C(W_{\alpha}, W)]^{-}_{\varepsilon,\lambda}$.
\end{enumerate}
\end{definition}
\par
	
\begin{remark}\label{remark6.3.}
In Definition~\ref{definition6.2}, $\{W_{\alpha}, \alpha \in \Lambda\}$ is a consistent family, which implies that $\Lambda$ is a $B_\mathcal{F}$-stable set, at the same time $\{W_\alpha, \alpha \in \Lambda\}$ is a decreasing net, which further means that $\Lambda$ is a directed set and $W_{\alpha_{1}} \supset W_{\alpha_{2}}$ whenever $\alpha_{2} \geq \alpha_{1}$. Besides, the reader may consult the comments following \cite[Definition~1.12]{MTGX2025} for the nontriviality of Definition~\ref{definition6.2}.
\end{remark}
	
\par	
Theorem~\ref{theorem6.4} below is, without doubt, a crucial result of \cite{MTGX2025}, which is an important generalization of the main result of \cite{L1974a}.
\par	
\begin{theorem}[{\cite[Theorem~1.14]{MTGX2025}}]\label{theorem6.4}
Let $(E,\|\cdot\|)$ be a $\mathcal{T}_{\varepsilon,\lambda}$-complete $RN$ module and $G$ be an $L^{0}$-convexly compact subset of $E$. Then $G$ has random complete normal structure iff $G$ has random normal structure.
\end{theorem}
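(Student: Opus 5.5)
The plan is to prove the two implications separately. Throughout, I would use three facts repeatedly.
\begin{itemize}
\item $L^{0}$-convex compactness of $G$ (Definition~\ref{definition5.6} and Theorem~\ref{theorem5.7}) gives nonemptiness of random Chebyshev centers. For an a.s. bounded $H$ and a $\mathcal{T}_{\varepsilon,\lambda}$-closed $L^0$-convex $W\subset G$, the function $R(H,\cdot)$ is $L^0$-convex, $\sigma$-stable and $\mathcal{T}_{\varepsilon,\lambda}$-lower semicontinuous on $W$. The sublevel sets $\{x\in W: R(H,x)\le R(H,W)+\varepsilon\}$, $\varepsilon\in L^0_{++}(\mathcal{F})$, are nonempty by Corollary~\ref{corollary4.6}, closed and $L^0$-convex, and have the finite intersection property. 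Hence $C(H,W)=\bigcap_{\varepsilon}\{\cdots\}\neq\emptyset$, and it is closed and $L^0$-convex.
\item Lemma~\ref{lemma4.3} lets me pass freely between the two closures.
\item A countable exhaustion argument in $B_{\mathcal{F}}$ lets me turn ``fails on some set of positive measure'' into statements along a partition.
\end{itemize}

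\emph{Random complete normal structure $\Rightarrow$ random normal structure.} Let $H\subset G$ be a.s. bounded, $\mathcal{T}_{\varepsilon,\lambda}$-closed and $L^0$-convex with $D(H)>0$. Take $\Lambda$ a singleton and $W_\alpha=H$; this is trivially a decreasing consistent net and satisfies $R(W_\alpha,H)=R(H,H)$. Condition (*) then supplies, for each $B\subset(D(H)>0)$ with $P(B)>0$, a point $y_B\in H$ with $\tilde I_By_B\notin\tilde I_B C(H,H)$. Pick $h\in C(H,H)$, which exists by the first fact, and let $A=(R(H,H)=D(H))\cap(D(H)>0)$.

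Suppose $P(A)>0$. Since $R(H,x)\le D(H)$ for every $x\in H$, on $A$ every point of $H$ attains the minimal value. For any $y\in H$, the concatenation $\tilde I_A y+\tilde I_{A^c}h$ lies in $H$ by $\sigma$-stability of the $L^0$-convex set $H$. Its radius is $R(H,\cdot)$ computed piecewise, using that $R(H,\cdot)$ is $\sigma$-stable, so it lies in $C(H,H)$. Hence $\tilde I_A H\subset\tilde I_A C(H,H)$, which contradicts the existence of $y_A$. Thus $R(H,h)=R(H,H)<D(H)$ on $(D(H)>0)$, and $h$ is a nondiametral point.

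\emph{Random normal structure $\Rightarrow$ random complete normal structure.} This follows Lim's scheme \cite{L1974a}. Fix $W$ and the net $\{W_\alpha\}$ with $R(W_\alpha,W)=r:=R(W,W)$, and put $C_\alpha=C(W_\alpha,W)$, which is nonempty by the first fact.

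First I show that $C_\alpha$ is increasing in $\alpha$. If $\beta\ge\alpha$, then $W_\beta\subset W_\alpha$, so $R(W_\beta,x)\le R(W_\alpha,x)=r=R(W_\beta,W)$ for $x\in C_\alpha$, whence $C_\alpha\subset C_\beta$. Consequently $\bigcup_\alpha C_\alpha$ is $L^0$-convex. By consistency of the family together with $B_{\mathcal{F}}$-stability of $\Lambda$ and $\sigma$-stability of each $C_\alpha$, the union is also $\sigma$-stable, so its closure $K$ is a nonempty closed $L^0$-convex subset of $W$. It remains to produce $y_B$.

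Arguing by contradiction, suppose $\tilde I_B W\subset\tilde I_B K$ for some $B\subset(D(W)>0)$ with $P(B)>0$. Then every $w\in W$ is, on $B$, a $\mathcal{T}_c$-limit of centers, and so $R(W_\alpha,w)\le r$ on $B$ for all large $\alpha$, using lower semicontinuity and the $L^0$-topology to get an $\varepsilon$-argument with $\varepsilon\in L^0_{++}(\mathcal{F})$. Letting $w$ range over $W_\alpha$ itself gives $D(W_\alpha)\le r$ on $B$ for all $\alpha$. Next I would apply random normal structure to $W$, which gives a nondiametral point and hence $r<D(W)$ on $(D(W)>0)$. I would also apply it to the closed $L^0$-convex hull of a suitable set, namely $\tilde I_B$-localized $\bigcap_\alpha\overline{\mathrm{conv}}(W_\alpha)$, made nonempty by $L^0$-convex compactness. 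Combining these with the radius identity $R(W_\alpha,W)=r$, I would derive a contradiction along Lim's lines. Finally, I would replace ``on $B$'' by a maximal exhaustion to obtain the required $y_B$ for every $B$.

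I expect this last step to be the main obstacle. It requires reproducing Lim's contradiction when the net is only consistent rather than an arbitrary classical net, and it requires localizing every inequality to $B$ via stable-set gluing so that strict inequalities ``on $B$'' are genuinely obtained. My first attempt would be to run Lim's argument on the $B_{\mathcal{F}}$-stable index set $\Lambda$, replacing each classical limit by an $L^0$-convex-compactness intersection.
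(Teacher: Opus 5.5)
Your proof that random complete normal structure implies random normal structure is correct. The one-point index set with $W_\alpha=H$ is an admissible consistent net, and gluing on $A=(R(H,H)=D(H))\cap(D(H)>0)$ gives $\tilde I_AH\subseteq\tilde I_AC(H,H)$, which contradicts the existence of $y_A$. The preliminaries of the converse are also fine: nonempty centers from $L^0$-convex compactness, monotonicity of $C_\alpha$, and $\sigma$-stability of $\bigcup_\alpha C_\alpha$ via consistency. Together they settle the first clause of (*).

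The gap is the second clause, the existence of $y_B$. This is the whole substance of the hard direction, i.e. the random form of Lim's theorem \cite{L1974a}, and your argument fails at its first concrete step. From $\tilde I_BW\subseteq\tilde I_BK$ you only obtain, for each $w\in W$ and $\varepsilon\in L^0_{++}(\mathcal F)$, an index $\alpha(w,\varepsilon)$ with $R(W_\beta,w)\le r+\varepsilon$ on $B$ for all $\beta\ge\alpha(w,\varepsilon)$. That is, $\bigwedge_\beta R(W_\beta,w)=r$ on $B$, pointwise in $w$. Neither ``$R(W_\alpha,w)\le r$ for large $\alpha$'' nor any uniformity in $w$ follows, so ``$D(W_\alpha)\le r$ on $B$'' is unfounded.

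The claim is false already in the classical case. Take $\mathcal F=\{\emptyset,\Omega\}$, $E=c_0$, $W=\{x\in c_0:\sum_n|x_n|\le 1\}$ and $W_N=\{\pm e_n:n\ge N\}$. Then $R(W_N,W)=R(W,W)=1$, and $C(W_N,W)=\{x\in W:x_n=0 \text{ for } n\ge N\}$, whose union is dense in $W$. Yet $D(W_N)=2$, and $R(W_N,w)>1$ for every $N$ whenever $w$ has infinitely many nonzero coordinates. This $W$ lacks normal structure, but your inference used only the density. Note also that if the claim held you would be done at once: the localized closed $L^0$-convex hull of $W_\alpha$ would then be diametral with random diameter $r>0$ on $B$. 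So this is exactly where the difficulty lies.

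What you offer in its place is not yet an argument. On $B$ the set $\bigcap_\alpha\overline{\mathrm{conv}}(W_\alpha)$ consists of random Chebyshev centers of $W$, so its random diameter is at most $r$ there. It also inherits no lower bound on random radii, since conditions like $R(\cdot,x)\ge r$ do not survive intersections of decreasing nets. For example, with $W=\overline{\mathrm{conv}}\{e_n\}$ and $W_N=\{e_n:n\ge N\}$ in $c_0$, where the density hypothesis holds, the intersection is $\{0\}$. Applying random normal structure to this set therefore yields no contradiction.

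What is needed is a genuine proof that the situation ``$\bigcup_\alpha C(W_\alpha,W)$ is dense in $W$ on $B$'' contradicts random normal structure of the $L^0$-convexly compact set $W$. That situation forces $\bigwedge_\alpha R(W_\alpha,w)=r$ on $B$ for every $w\in W$, so this is the core of Lim's argument. It has to be run with stable-set localization: the consistency of the net and the $\sigma$-stability of every auxiliary set must be used, so that the nondiametral point you eventually produce is nondiametral on $B$ itself. ``Derive a contradiction along Lim's lines'' leaves precisely this step undone, so the direction ``random normal structure $\Rightarrow$ random complete normal structure'' is not proved.
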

\par			
Theorem~~\ref{theorem6.4} eventually leads us to the following:
\par			
\begin{theorem}[{\cite[Theorem~1.15]{MTGX2025}}]\label{theorem6.5}
Let $(E,\|\cdot\|)$ be a $\mathcal{T}_{\varepsilon,\lambda}$-complete $RN$ module, $G \subset E$ an $L^{0}$-convexly compact subset with random normal structure and $\mathcal{T}$ a commutative family of nonexpansive mappings from $G$ to $G$. Then $\mathcal{T}$ has a common fixed point.
\end{theorem}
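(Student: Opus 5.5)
We follow the classical scheme of Lim and Belluce--Kirk: first produce a minimal invariant set, then use random complete normal structure (available via Theorem~\ref{theorem6.4}) to show this set is a singleton.

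\emph{Step 1: a minimal invariant set.} Let $\mathcal{M}$ be the family of all nonempty $\mathcal{T}_{\varepsilon,\lambda}$-closed $L^0$-convex subsets $K\subset G$ with $T(K)\subset K$ for every $T\in\mathcal{T}$, ordered by inclusion. It is nonempty since $G\in\mathcal{M}$. Any chain in $\mathcal{M}$ has the finite intersection property, so by $L^0$-convex compactness (Definition~\ref{definition5.6}) its intersection is nonempty. That intersection is again closed, $L^0$-convex and $\mathcal{T}$-invariant. Zorn's lemma then gives a minimal $W\in\mathcal{M}$. Since $G$ is $L^0$-convexly compact it is a.s. bounded, and by Theorem~\ref{theorem6.4} it has random complete normal structure. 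Suppose $W$ has more than one point; we aim for a contradiction.

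\emph{Step 2: build a decreasing consistent net in $W$.} Let $\Lambda_0$ be the commutative semigroup generated by $\mathcal{T}$. Let $\Lambda$ be its $\sigma$-stabilization, namely formal concatenations $\sum_n \tilde I_{A_n} S_n$ with $S_n\in\Lambda_0$. By Proposition~\ref{proposition3.2} all these maps are $\sigma$-stable, so $\Lambda$ is a $B_{\mathcal F}$-stable set. Direct $\Lambda$ by $S_2\ge S_1$ iff $S_2=R\circ S_1$ for some $R\in\Lambda$; commutativity makes this a directed set. Put
\[ W_S=\sigma\text{-stable hull of } S(W) \]
(note $W$ is $\sigma$-stable by Corollary~\ref{corollary2.17}). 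Then $\{W_S\}$ is decreasing, since $R S(W)\subset S(W)$, and by construction it is a $B_{\mathcal F}$-consistent family.

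\emph{Step 3: radius condition and the Chebyshev centers.} Minimality should give $\overline{\mathrm{co}}_{L^0}(S(W))=W$. The reason is that this hull is invariant: $T S(W)=S T(W)\subset S(W)$ by commutativity. Consequently $R(W_S,W)=R(W,W)$, using that random radii are unchanged under passing to the closed $L^0$-convex hull and to the $\sigma$-stable hull. By condition $(*)$ of Definition~\ref{definition6.2}, the set
\[ C_0=\Bigl[\bigcup_S C(W_S,W)\Bigr]^-_{\varepsilon,\lambda} \]
is nonempty. Each $C(W_S,W)$ is $L^0$-convex, and the union of this increasing family is $L^0$-convex, so $C_0$ is a closed $L^0$-convex set. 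It is also $\mathcal{T}$-invariant. Indeed, if $x\in C(W_S,W)$ then $\|Tx-Ty\|\le\|x-y\|$ gives $R(W_{TS},Tx)\le R(W_S,x)=R(W,W)$, so $Tx\in C(W_{TS},W)$. Minimality of $W$ then forces $C_0=W$.

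\emph{Step 4: contradiction, and the main obstacle.} The second clause of $(*)$ says that for any $B\subset(D(W)>0)$ with $P(B)>0$ there is $y_B\in W$ with $\tilde I_B y_B\notin \tilde I_B C_0=\tilde I_B W$. This is impossible, since $y_B\in W$. Hence $W$ cannot have more than one point, i.e. $W$ is a singleton $\{x^*\}$, and $x^*$ is a common fixed point. The delicate part is Steps 2--3. One must verify that the index set is genuinely $B_{\mathcal F}$-stable and the net consistent, which requires concatenation of maps to commute with the stabilized hulls. One must also check that closures and $\sigma$-stable hulls do not alter $R(\cdot,W)$, which uses Lemma~\ref{lemma4.3} and Corollary~\ref{corollary4.5} to approximate suprema on $\sigma$-stable sets. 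I would handle these first by working with the stable-set calculus of Definition~\ref{definition2.6}.
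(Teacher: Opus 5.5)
Your overall route is the intended one: a minimal invariant set $W$ obtained from Zorn and $L^0$-convex compactness, random complete normal structure from Theorem~\ref{theorem6.4}, the stabilized commutative semigroup as a $B_{\mathcal F}$-stable directed index set, and the invariance $T(C(W_S,W))\subset C(W_{TS},W)$. Steps 1, 2 and 4 are essentially fine; you should adjoin the identity to $\Lambda_0$ so that your order is reflexive.

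\textbf{The gap is in Step 3.} You derive the radius hypothesis of $(*)$ from the claim that $\overline{\mathrm{co}}_{L^0}(S(W))$ is $\mathcal{T}$-invariant, but that claim does not follow from what you wrote. The relation $TS(W)=ST(W)\subset S(W)$ only shows that the \emph{set} $S(W)$ is invariant. A nonexpansive $T$ is not $L^0$-affine, so it need not map the closed $L^0$-convex hull of an invariant set into that hull. The single-map trick $T(\overline{\mathrm{co}}_{L^0}T(W))\subset T(W)\subset\overline{\mathrm{co}}_{L^0}T(W)$ works only because the map applied is the one generating the hull. For your hull it gives $Q(\overline{\mathrm{co}}_{L^0}S(W))\subset Q(W)\subset S(W)$ only for those $Q\in\Lambda$ with $Q(W)\subset S(W)$, e.g.\ $Q=S$ or $Q\ge S$. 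It says nothing for an arbitrary $T\in\mathcal{T}$. So minimality of $W$ cannot be invoked, and $R(W_S,W)=R(W,W)$ remains unproved. This is precisely the hypothesis that makes $(*)$ applicable. Your remark that hulls do not change $R(\cdot,W)$ is correct but beside the point.

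\textbf{A way to close the gap without any claim about hulls.} For $x\in W$ put $\Phi(x)=\bigwedge_{S\in\Lambda}R(W_S,x)$.
\begin{itemize}
\item Nonexpansiveness gives $R(W_{TS},Tx)\le R(W_S,x)$, hence $\Phi(Tx)\le\Phi(x)$ for $T\in\mathcal{T}$.
\item $\Phi$ is $1$-Lipschitz and $\sigma$-stable, and it is $L^0$-convex by directedness of $\Lambda$.
\item Let $\rho=\bigwedge_{x\in W}\Phi(x)$. By Corollary~\ref{corollary4.6} each set $\{x\in W:\Phi(x)\le\rho+1/n\}$ is nonempty. These sets are closed, $L^0$-convex and decreasing, so $L^0$-convex compactness gives that $\{x\in W:\Phi(x)=\rho\}$ is nonempty.
\item That set is $\mathcal{T}$-invariant, so minimality gives $\Phi\equiv\rho$ on $W$.
\item By $L^0$-convex compactness again, $\bigcap_{S}\overline{\mathrm{co}}_{L^0}(W_S)\neq\emptyset$. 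Any $y$ in it lies in $W$ and satisfies $\|x-y\|\le R(W_S,x)$ for all $S$, i.e.\ $\|x-y\|\le\Phi(x)=\rho$ for every $x\in W$.
\end{itemize}
Hence, for every $S\in\Lambda$,
\[ R(W,W)\le R(W,y)\le\rho\le R(W_S,W)\le R(W,W). \]
This is exactly the radius condition you need, and with it the rest of your Steps 3--4 goes through.
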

\par			
When $T$ in Theorem~\ref{theorem5.10} is isometric, we can require that $T$ have a fixed point in $C(G, G)$, which also gives a random generalization of \cite[Theorem~2]{LLPV2003}.
\par

\begin{theorem}[{\cite[Theorem~1.16]{MTGX2025}}]\label{theorem6.6}
	Let $(E,\|\cdot\|)$ be a $\mathcal{T}_{\varepsilon,\lambda}$-complete $RN$ module and $G \subset E$ be an $L^0$-convexly compact subset with random normal structure. Then every isometric mapping $T$ from $G$ to $G$ has a fixed point in $C(G,G)$.
\end{theorem}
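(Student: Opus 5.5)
The plan is to imitate the classical Lim--Lau--Prus--Veeramani argument for isometries: show that the random Chebyshev center $C(G,G)$ is a nonempty, $T$-invariant, $\mathcal{T}_{\varepsilon,\lambda}$-closed $L^0$-convex subset of $G$, and that $T$ is surjective onto a suitable minimal invariant set inside it. The random normal structure, via Theorem~\ref{theorem6.4}, then forces that minimal set to have random diameter $0$, that is, to be a single point.

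\textbf{Step 1 (the center is a good set).} Since $G$ is $L^0$-convexly compact and a.s. bounded, the map $x\mapsto R(G,x)$ is $L^0$-convex, $\sigma$-stable (by the argument of Proposition~\ref{proposition3.2}, since it is $1$-Lipschitz) and $\mathcal{T}_{\varepsilon,\lambda}$-lower semicontinuous. By Corollary~\ref{corollary4.6}, the sublevel sets $\{x\in G: R(G,x)\le R(G,G)+\varepsilon\}$, $\varepsilon\in L^0_{++}(\mathcal{F})$, are nonempty, closed and $L^0$-convex. They have the finite intersection property, so $C(G,G)\neq\emptyset$ by Definition~\ref{definition5.6}. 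The set $C(G,G)$ is also $L^0$-convex, $\sigma$-stable and closed.

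\textbf{Step 2 (invariance under $T$).} The key point is $T(C(G,G))\subset C(G,G)$. Since $T$ is an isometry, $R(T(G),T(x))=R(G,x)$. Let $K=\overline{\mathrm{conv}}_{L^0}(T(G))$, the $\mathcal{T}_{\varepsilon,\lambda}$-closed $L^0$-convex hull of $T(G)$; it is invariant because $T(K)\subset T(G)\subset K$. Using a Zorn-type argument with $L^0$-convex compactness, I pass to a minimal nonempty closed $L^0$-convex $T$-invariant set $K_0\subset G$ with $\overline{\mathrm{conv}}_{L^0}(T(K_0))=K_0$. The stabilization in Definition~\ref{definition2.6} must be used so that minimality is compatible with $\sigma$-stability. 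For such a $K_0$, the equality $R(K_0,y)=R(T(K_0),y)$ holds because $R(H,y)$ is unchanged by passing to the closed $L^0$-convex hull of $H$. Combined with the isometry identity, this shows that $T$ maps $C(K_0,K_0)$ into itself.

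\textbf{Step 3 (conclusion).} I then replace $G$ by $K_0$, or more precisely rerun the argument inside $C(G,G)$ by choosing the minimal set among invariant subsets of $C(G,G)$. The decreasing consistent nets built from the iterated Chebyshev centers $C_{n+1}=C(C_n,C_n)$ (stabilized over $L^0(\mathcal{F},\mathbb{N})$-indexed iterates) all satisfy $R(W_\alpha,W)=R(W,W)$. Theorem~\ref{theorem6.4} gives random complete normal structure, and condition $(*)$ of Definition~\ref{definition6.2} then yields: if $D(K_0)>0$ on some set of positive measure, there is a point of $K_0$ that is nondiametral on $(D(K_0)>0)$. Its center $C(K_0,K_0)$ is then a proper closed $L^0$-convex invariant subset of $K_0$ on that set, and gluing it with $K_0$ elsewhere contradicts minimality. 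Hence $D(K_0)=0$, so $K_0=\{x^*\}$ with $T(x^*)=x^*$, and $x^*\in C(G,G)$ because $K_0$ was chosen inside $C(G,G)$.

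\textbf{Main obstacle.} The hardest part is Step 2 combined with the choice of the minimal set inside $C(G,G)$. In the classical proof one must show that the center of the minimal set contains $T$-orbits, and that $R(G,\cdot)$ behaves well on $C(G,G)$. In the random setting every inclusion holds only on a measurable set, so each inequality must be localized to $(D>0)$ and glued by countable concatenation. I would first try to handle this by working throughout with the $B_{\mathcal{F}}$-stable family of invariant sets and by using Corollary~\ref{corollary4.5} to obtain near-optimal points on $\Omega$.
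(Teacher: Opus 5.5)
\textbf{The gap: $C(G,G)$ is not $T$-invariant.} Step~2 asserts $T(C(G,G))\subset C(G,G)$, and Step~3 depends on it, since you can only choose a minimal invariant set inside $C(G,G)$ if $C(G,G)$ is invariant. For a non-surjective isometry this is false. The isometry only gives $R(G,T(x))\geq R(T(G),T(x))=R(G,x)$. The reverse inequality would need $G$ to be the closed $L^0$-convex hull of $T(G)$, which is the surjective situation behind Theorem~\ref{theorem6.7}, not the present one.

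A counterexample already exists for trivial $\mathcal{F}$, where the statement is the classical theorem of \cite{LLPV2003}. Take $X=\mathbb{R}\oplus_\infty\ell^2$ with norm $\max\{|s|,\|h\|_2\}$; it is reflexive and has normal structure. Index coordinates of $\ell^2$ from $0$ and let $B=\{h\in\ell^2:\|h\|_2\leq 1,\ h_0\geq 0\}$ and $G=[-\sqrt2,\sqrt2]\times B$. Let $T(s,h)=(s,Sh)$, where $S(h_0,h_1,\dots)=(0,h_0,h_1,\dots)$ is the right shift; $T$ is an isometry of $G$ into $G$. Then $R(G,(s,h))=\max\{\sqrt2+|s|,\,r_B(h)\}$ with $r_B(h)=\sup_{y\in B}\|h-y\|_2$. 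One checks $r_B(e_0)=\sqrt2$ and $r_B(e_1)=2$ (take $y=-e_1$). Hence $(0,e_0)\in C(G,G)$ and $R(G,G)=\sqrt2$, while $T(0,e_0)=(0,e_1)$ has $R(G,(0,e_1))=2$.

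What Step~2 actually proves is invariance of $C(K_0,K_0)$ for a minimal invariant $K_0$. That is just Kirk's argument, i.e.\ Theorem~\ref{theorem5.10}, and it yields a fixed point with no reason to lie in $C(G,G)$. Step~3 has two further problems. First, nets of iterated centers need not satisfy $R(W_\alpha,W)=R(W,W)$: if $C(W,W)=\{c\}$ then $R(\{c\},W)=0$. Second, what you extract from $(*)$ is only a nondiametral point, i.e.\ random normal structure, which is already a hypothesis, so Theorem~\ref{theorem6.4} does no real work.

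\textbf{The missing idea.} Replace $C(G,G)$ by an invariant set whose fixed points are automatically centers. Put $\psi(x)=\bigwedge_{n\in\mathbb{N}}R(T^n(G),x)$. Since $T^{n+1}(G)\subset T^n(G)$, this is a decreasing a.s.\ limit, and $\psi$ has the following properties:
$\psi$ is $L^0$-convex and $\sigma$-stable;
$|\psi(x)-\psi(y)|\leq\|x-y\|$;
$\psi\leq R(G,\cdot)$;
$\psi\circ T=\psi$, because $R(T^n(G),T(x))=R(T^{n-1}(G),x)$ for $n\geq 1$.

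Let $\rho=\bigwedge_{x\in G}\psi(x)$ and $A=\{x\in G:\psi(x)=\rho\}$. Your Step~1 argument (Corollary~\ref{corollary4.6} plus $L^0$-convex compactness, applied to the sublevel sets $\{\psi\leq\rho+\varepsilon\}$) shows that $A$ is nonempty, $\mathcal{T}_{\varepsilon,\lambda}$-closed and $L^0$-convex, and $\psi\circ T=\psi$ makes it $T$-invariant. Since $A$ inherits $L^0$-convex compactness and random normal structure from $G$, Theorem~\ref{theorem5.10} gives $p\in A$ with $T(p)=p$. Finally, $R(T^n(G),p)=\bigvee_{y\in G}\|T^n(p)-T^n(y)\|=R(G,p)$ for all $n$. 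Hence $R(G,p)=\psi(p)=\rho\leq\psi(x)\leq R(G,x)$ for every $x\in G$, i.e.\ $p\in C(G,G)$.
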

\par
Similarly to Theorem~\ref{theorem6.6}, Theorem~\ref{theorem6.7} below is a surjectively isometric version of Theorem~\ref{theorem6.5}, where the commutativity of $\mathcal{T}$ may be omitted, and which is a random generalization of Brodskii and Milman's result in \cite{BD1948}.
\par	
\begin{theorem}[{\cite[Theorem~1.17]{MTGX2025}}]\label{theorem6.7}
Let $(E,\|\cdot\|)$ be a $\mathcal{T}_{\varepsilon,\lambda}$-complete $RN$ module, $G \subset E$ be an $L^{0}$-convexly compact subset with random normal structure, and $\mathcal{T}$ a family of surjective and isometric mappings from $G$ to $G$. Then $\mathcal{T}$ has a common fixed point in $C(G,G)$.
\end{theorem}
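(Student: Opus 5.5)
The idea is to imitate the Brodskii--Milman argument, with $L^0$-convex compactness replacing weak compactness and stable set theory handling the randomness. Write $\mathcal{T}=\{T_i, i\in I\}$.

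\textbf{Step 1 (the Chebyshev center is a good invariant set).} Put $C:=C(G,G)$. Because $G$ is $L^0$-convexly compact and has random normal structure, Theorem~\ref{theorem6.4} shows that $G$ has random complete normal structure. I also want $C$ to be nonempty, $\mathcal{T}_{\varepsilon,\lambda}$-closed, $L^0$-convex, $\sigma$-stable and $L^0$-convexly compact. For this I will use the following facts.
\begin{itemize}
\item The function $R(G,\cdot)$ is $\sigma$-stable, $L^0$-convex and $\mathcal{T}_{\varepsilon,\lambda}$-lower semicontinuous.
\item Its sublevel sets $\{x\in G: R(G,x)\le R(G,G)+\varepsilon\}$, for $\varepsilon\in L^0_{++}(\mathcal{F})$, are nonempty by Corollary~\ref{corollary4.6}.
\item These sublevel sets are closed and $L^0$-convex, and they have the finite intersection property.
\end{itemize}
$L^0$-convex compactness then gives $C\neq\emptyset$.

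Each $T_i$ is a surjective isometry of $G$, so $R(G,T_i x)=\bigvee_{y\in G}\|T_ix-T_iy\|=R(G,x)$. Hence $T_i(C)=C$, and $T_i|_C$ is again a surjective isometry.

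\textbf{Step 2 (minimal invariant set).} Apply Zorn's lemma to the family $\mathcal{M}$ of nonempty $\mathcal{T}_{\varepsilon,\lambda}$-closed $L^0$-convex subsets $K\subset G$ that satisfy $T_i(K)=K$ for all $i$. Chains have nonempty intersections by $L^0$-convex compactness, and the intersection is still invariant. The inclusion $T_i(\bigcap K)\supseteq\bigcap K$ uses injectivity of $T_i$ together with $T_i(K)=K$ for each $K$. Taking $G$ itself as the starting set gives a minimal $K_0\in\mathcal{M}$. Since $C(K_0,K_0)$ is again invariant and closed $L^0$-convex (Step 1 applied to $K_0$), minimality forces $C(K_0,K_0)=K_0$. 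In other words, every point of $K_0$ is diametral: $R(K_0,x)=D(K_0)$ for all $x\in K_0$.

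\textbf{Step 3 (conclude, the main obstacle).} Random normal structure only gives a nondiametral point on the set $(D(K_0)>0)$. So I must show that $D(K_0)=0$ a.s., which means $K_0$ is a singleton $\{x^*\}$, a common fixed point. The difficulty is that $D(K_0)$ may vanish only on part of $\Omega$, so the classical dichotomy breaks.

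My plan is to first replace $K_0$ by its $\sigma$-stable hull and use that concatenation commutes with the $\sigma$-stable maps $T_i$ (Proposition~\ref{proposition3.2}, isometries being $L^0$-Lipschitz). With that in hand, $K_0$ can be localized to $A=(D(K_0)>0)$ by the set $\tilde I_A K_0+\tilde I_{A^c}x_0$. If $P(A)>0$, random normal structure gives $h$ with $R(K_0,h)<D(K_0)$ on $A$. This contradicts $C(K_0,K_0)=K_0$ on $A$.

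Finally, to get the fixed point inside $C(G,G)$, I run the Zorn argument of Step 2 inside $C$ rather than $G$, which is legitimate by Step 1. Then $x^*\in C(G,G)$.
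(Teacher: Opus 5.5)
Your proof is correct and follows the classical Brodskii--Milman route that Theorem~\ref{theorem6.7} randomizes, which is presumably also the route of the proof in \cite{MTGX2025} (the survey itself gives no proof): $C(G,G)$ is nonempty by $L^{0}$-convex compactness and is mapped onto itself by every surjective isometry, a Zorn-minimal nonempty $\mathcal{T}_{\varepsilon,\lambda}$-closed $L^{0}$-convex set $K_0\subset C(G,G)$ with $T(K_0)=K_0$ for all $T\in\mathcal{T}$ must satisfy $C(K_0,K_0)=K_0$, and random normal structure then forces $D(K_0)=0$. The ``main obstacle'' of Step~3 is illusory, so Theorem~\ref{theorem6.4}, the $\sigma$-stable hull and the localization can all be dropped: $K_0$ is already $\sigma$-stable by Corollary~\ref{corollary2.17}, and $D(K_0)\neq 0$ means exactly that $(D(K_0)>0)$ has positive probability, so the nondiametral point $h$ supplied by Definition~\ref{definition5.1} gives $R(K_0,h)<D(K_0)$ on a non-null set, directly contradicting $R(K_0,h)=D(K_0)$.
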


We would like to mention Corollary 3.7 of \cite{MTGX2025}, which shows that $\mathcal{T}$ still has a common fixed point in $C(G,G)$ if $\mathcal{T}$ in Theorem \ref{theorem6.7} is a commutative family of nonexpansive mappings from $G$ to $G$ satisfying $[T(G)]^{-}_{\varepsilon,\lambda}=G$ for each $T\in \mathcal{T}$. In fact,  Corollary 3.7 of \cite{MTGX2025} gave a random generalization of \cite[Theorem 1.9]{RV2015}.

\par	
Similarly to Corollary~\ref{corollary5.11}, by lifting a random operator we may obtain the following three random fixed point theorems for random nonexpansive or isometric operators as an easy application of Theorems~6.5--6.7, respectively.
\par	

\begin{corollary}[{\cite[Theorem~1.18]{MTGX2025}}]\label{corollary6.8}
Let $(B,\|\cdot\|)$ be a Banach space, $V$ a weakly compact convex subset with normal structure and $\mathcal{T}$ a commutative family of strong random nonexpansive operators from $\Omega \times V$ to $V$. Then there exists a strong random element $x^{0}(\cdot): \Omega \to V$ such that $P\{\omega \in \Omega: T(\omega, x^{0}(\omega)) = x^{0}(\omega)\} = 1$ for any $T \in \mathcal{T}$.
\end{corollary}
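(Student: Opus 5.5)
The plan is to lift each random operator and reduce the corollary to Theorem~\ref{theorem6.5}. Set $G=L^{0}(\mathcal{F},V)$, the equivalence classes of strong random elements from $\Omega$ to $V$, regarded as a subset of the $RN$ module $E=L^{0}(\mathcal{F},B)$. This $E$ is $\mathcal{T}_{\varepsilon,\lambda}$-complete, since $B$ is complete. Because $V$ is closed and convex, $G$ is a $\mathcal{T}_{\varepsilon,\lambda}$-closed $L^{0}$-convex subset of $E$. Since $V$ is weakly compact, Corollary~\ref{corollary5.9} shows that $G$ is $L^{0}$-convexly compact. Since $V$ is weakly compact with normal structure, Proposition~\ref{proposition5.5} shows that $G$ has random normal structure. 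So $G$ satisfies the hypotheses on the domain in Theorem~\ref{theorem6.5}.

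Next, for each $T\in\mathcal{T}$, define the lifted operator $\hat{T}:G\to G$ as in Proposition~\ref{proposition2.21}.
\begin{enumerate}[(i)]
\item \emph{$\hat T$ is well defined with values in $G$.} Each $T(\omega,\cdot)$ is nonexpansive, so $T$ is sample-continuous. It is also strong, so by the remark after Definition~\ref{definition2.19}, $T(\cdot,x^{0}(\cdot))$ is a strong random element into $V$. Changing the representative $x^{0}$ on a null set changes this element only on a null set.
\item \emph{$\hat T$ is nonexpansive.} Pointwise, $\|T(\omega,x^{0}(\omega))-T(\omega,y^{0}(\omega))\|\le\|x^{0}(\omega)-y^{0}(\omega)\|$, which gives $\|\hat{T}x-\hat{T}y\|\le\|x-y\|$ in $L^{0}_{+}(\mathcal{F})$.
\item \emph{The family $\{\hat{T}:T\in\mathcal{T}\}$ is commutative.} I read commutativity of $\mathcal{T}$ as $T_{1}(\omega,T_{2}(\omega,v))=T_{2}(\omega,T_{1}(\omega,v))$ for all $v\in V$ and all $\omega$ (or a.s.). Then $\hat{T}_{1}\hat{T}_{2}x$ and $\hat{T}_{2}\hat{T}_{1}x$ have representatives that agree pointwise, so $\hat{T}_{1}\hat{T}_{2}=\hat{T}_{2}\hat{T}_{1}$.
\end{enumerate}

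Theorem~\ref{theorem6.5} then gives a common fixed point $x\in G$ of $\{\hat{T}:T\in\mathcal{T}\}$. Fix a representative $x^{0}$, which is a strong random element $\Omega\to V$. By Proposition~\ref{proposition2.21}(2), $\hat{T}x=x$ means $T(\omega,x^{0}(\omega))=x^{0}(\omega)$ for almost every $\omega$, with the exceptional null set depending on $T$. This is exactly the required conclusion, $P\{\omega: T(\omega,x^{0}(\omega))=x^{0}(\omega)\}=1$ for each $T\in\mathcal{T}$. The conclusion is stated for each $T$ separately, so no countability of $\mathcal{T}$ is needed, and no measurable selection is used.

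The main obstacle is not in the lifting, which is routine, but in the two structural inputs on $G$. These are $L^{0}$-convex compactness (Corollary~\ref{corollary5.9}) and random normal structure (Proposition~\ref{proposition5.5}), and both are assumed from earlier. The only care needed is the precise meaning of commutativity of random operators: if it holds only almost surely for each pair, the null sets must be absorbed in step (iii). That works because equality in $L^{0}(\mathcal{F},V)$ is equality a.s.
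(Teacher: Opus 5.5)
Your proposal is correct and is essentially the paper's argument (Remark 5.12 and the sentence preceding the corollary): lift each $T\in\mathcal{T}$ to $\hat{T}$ on $L^{0}(\mathcal{F},V)\subset L^{0}(\mathcal{F},B)$, get $L^{0}$-convex compactness and random normal structure from Corollary~\ref{corollary5.9} and Proposition~\ref{proposition5.5}, apply Theorem~\ref{theorem6.5} to the commutative nonexpansive family $\{\hat{T}:T\in\mathcal{T}\}$, and read off the random fixed point via Proposition~\ref{proposition2.21}(2). One minor refinement concerns commutativity assumed only almost surely with a null set depending on $v\in V$: absorbing those null sets would also need sample-continuity and the separable range of $x^{0}$ (check the identity on a countable dense subset of $x^{0}(\Omega)$ and pass to limits), not merely the fact that equality in $L^{0}(\mathcal{F},V)$ is a.s. equality.
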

\par	
\begin{corollary}\rm{(}\cite[Theorem~1.19]{MTGX2025}).\label{corollary6.9}
Let $(B,\|\cdot\|)$ and $V$ be the same as in Corollary~\ref{corollary6.8}. If $T: \Omega\times V\rightarrow V$ is a strong random  isometric operator, then there exists a strong random element $x^{0}(\cdot): \Omega \to c(V,V)$ such that $T(\omega, x^{0}(\omega)) = x^{0}(\omega)$ for almost all $\omega\in\Omega$, where $c(V,V)$ is the Chebyshev center of $V$.
\end{corollary}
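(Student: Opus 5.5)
The plan is to lift $T$ to an operator on $G:=L^{0}(\mathcal{F},V)$ and apply Theorem~\ref{theorem6.6}. We regard $G$ as a subset of the $\mathcal{T}_{\varepsilon,\lambda}$-complete $RN$ module $L^{0}(\mathcal{F},B)$. Since $V$ is closed and convex, $G$ is a $\mathcal{T}_{\varepsilon,\lambda}$-closed $L^{0}$-convex subset. It is $L^{0}$-convexly compact by Corollary~\ref{corollary5.9}, because $V$ is weakly compact. It has random normal structure by Proposition~\ref{proposition5.5}, because $V$ is weakly compact with normal structure.

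Next we define the lifted operator $\hat{T}:G\to G$ by $\hat{T}(x)=$ the equivalence class of $T(\cdot,x^{0}(\cdot))$. An isometric operator is nonexpansive, hence sample-continuous. So Proposition~\ref{proposition2.21} shows that $\hat{T}$ is well defined, and that $T(\cdot,x^{0}(\cdot))$ is a strong random element with values in $V$. Since $\|T(\omega,u)-T(\omega,v)\|=\|u-v\|$ for every $\omega$, taking equivalence classes gives $\|\hat{T}(x)-\hat{T}(y)\|=\|x-y\|$. Thus $\hat{T}$ is isometric on $G$. Theorem~\ref{theorem6.6} then yields a fixed point $x\in C(G,G)$ of $\hat{T}$. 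By (2) of Proposition~\ref{proposition2.21}, any representative $x^{0}$ of $x$ is a random fixed point of $T$, and it is strong.

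The main obstacle is showing that $x^{0}(\omega)\in c(V,V)$ for almost all $\omega$, where $c(V,V)$ is the classical Chebyshev center. This means we must identify the random Chebyshev center $C(G,G)$ with $L^{0}(\mathcal{F},c(V,V))$.

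\emph{Step 1.} We show $R(G,x)$ equals the class of $\omega\mapsto r(V,x^{0}(\omega)):=\sup_{v\in V}\|x^{0}(\omega)-v\|$.
\begin{itemize}
\item The inequality ``$\le$'' holds because each $y\in G$ takes values in $V$.
\item For ``$\ge$'' we use constant elements $y\equiv v$ for $v$ in a countable dense subset of $V$. Such a subset exists if we work in the closed span of the separable range of $x^{0}$ together with a countable subset of $V$ that norms the needed distances. Alternatively, we choose $\varepsilon$-maximizers measurably via a countable dense subset of $\overline{\mathrm{span}}$ of the relevant values intersected with $V$. Here some care is needed, since $V$ itself may not be separable.
\end{itemize}

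\emph{Step 2.} Let $r_{0}=\inf_{u\in V}r(V,u)$. Using constant elements again, $R(G,G)\le r_{0}$. Conversely, $r(V,u)\ge r_{0}$ for every $u\in V$, so $R(G,x)\ge r_{0}$ for all $x\in G$. Hence $R(G,G)=r_{0}$, a constant.

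\emph{Step 3.} From Steps 1 and 2, $x\in C(G,G)$ means $r(V,x^{0}(\omega))=r_{0}$ a.s., that is, $x^{0}(\omega)\in c(V,V)$ a.s. We then modify $x^{0}$ on a null set to take a fixed value in $c(V,V)$. This set is nonempty, by weak compactness and weak lower semicontinuity of $r(V,\cdot)$. The modified element is a strong random element $\Omega\to c(V,V)$ with $T(\omega,x^{0}(\omega))=x^{0}(\omega)$ for almost all $\omega$.
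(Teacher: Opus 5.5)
Your proposal is correct and follows the paper's route: lift $T$ to $\hat{T}$ on $G=L^{0}(\mathcal{F},V)$, obtain $L^{0}$-convex compactness and random normal structure from Corollary~\ref{corollary5.9} and Proposition~\ref{proposition5.5}, apply Theorem~\ref{theorem6.6}, and identify $C(G,G)$ with $L^{0}(\mathcal{F},c(V,V))$, where $R(G,G)$ is the constant Chebyshev radius of $V$. The only loosely stated point is the ``$\geq$'' half of Step 1. It is done cleanly by taking a countable dense set $\{u_{k}\}$ in the range of $x^{0}$, choosing $v_{k,n}\in V$ with $\|u_{k}-v_{k,n}\|>r(V,u_{k})-1/n$, and using that $r(V,\cdot)$ is $1$-Lipschitz, so the countable family of constants $v_{k,n}$ already gives $\bigvee_{k,n}\|x-v_{k,n}\|=r(V,x^{0}(\cdot))$; no spans or measurable selections are needed.
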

\par	
\begin{corollary}\rm{(}\cite[Theorem~1.20]{MTGX2025}).\label{corollary 6.10}
	Let $(B,\|\cdot\|)$ and $V$ be the same as in Corollary~\ref{corollary6.8}. If $\mathcal{T}$ is a family of strong random surjective isometric operators from $\Omega \times V$ to $V$, then there exists a strong random element $x^{0}(\cdot): \Omega \to c(V,V)$ such that $P\{\omega \in \Omega: T(\omega, x^0(\omega)) = x^0(\omega)\} = 1$ for any $T \in \mathcal{T}$.
\end{corollary}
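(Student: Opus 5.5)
The plan is to lift the random operators and apply Theorem~\ref{theorem6.7}, in the same way Corollary~\ref{corollary5.11} follows from Theorem~\ref{theorem5.10}. Put $G=L^{0}(\mathcal{F},V)$, viewed as a $\mathcal{T}_{\varepsilon,\lambda}$-closed $L^0$-convex subset of the $\mathcal{T}_{\varepsilon,\lambda}$-complete $RN$ module $L^{0}(\mathcal{F},B)$. By Corollary~\ref{corollary5.9}, $G$ is $L^0$-convexly compact, since $V$ is weakly compact. By Proposition~\ref{proposition5.5}, $G$ has random normal structure. For each $T\in\mathcal{T}$, let $\hat{T}:G\to G$ be the lifted operator of Proposition~\ref{proposition2.21}. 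This is well defined: each $T$ is isometric in $x$ for every fixed $\omega$, hence sample-continuous, and it is strong. The lifted family is $\hat{\mathcal{T}}=\{\hat{T}:T\in\mathcal{T}\}$.

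Next I check that each $\hat{T}$ is a surjective isometry of $G$. Isometry is pointwise: $\|\hat{T}x-\hat{T}y\|$ is the class of $\|T(\omega,x^0(\omega))-T(\omega,y^0(\omega))\|=\|x^0(\omega)-y^0(\omega)\|$.

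Surjectivity is the main obstacle, because a preimage must be chosen measurably. For fixed $\omega$, the map $T(\omega,\cdot)$ is a bijective isometry of $V$, so its inverse $S(\omega,\cdot)$ is also an isometry. Given a strong random element $y^0$, set $x^0(\omega)=S(\omega,y^0(\omega))$. The task is to show $x^0$ is a strong random element. First, $S$ is a random operator: for $v\in V$ and any $u$, we have $\|S(\omega,v)-u\|=\|v-T(\omega,u)\|$. Choose a countable set $D$ that is dense in a separable subspace containing the relevant ranges; the separability comes from the strong measurability of $T(\cdot,u)$ for countably many $u$ together with the isometry. Then $\omega\mapsto\|S(\omega,v)-u\|$ is measurable for every $u\in D$, which gives measurability of $S(\cdot,v)$. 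Approximating $y^0$ by simple random elements and using that $S(\omega,\cdot)$ is $1$-Lipschitz then shows $x^0$ is strong. If this separability bookkeeping becomes delicate, the fallback is to argue directly in $G$. Since $\hat{T}$ is isometric, $\hat{T}(G)$ is $\mathcal{T}_{\varepsilon,\lambda}$-complete, hence closed. It is also $\sigma$-stable, because $\hat{T}$ is $\sigma$-stable. One then shows it contains all simple elements, and therefore is dense in $G$.

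With these facts, Theorem~\ref{theorem6.7} applied to $\hat{\mathcal{T}}$ yields $x\in C(G,G)$ with $\hat{T}x=x$ for every $T\in\mathcal{T}$. By Proposition~\ref{proposition2.21}(2), a representative $x^0$ of $x$ is a common random fixed point, that is, $P\{T(\omega,x^0(\omega))=x^0(\omega)\}=1$ for each $T$.

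It remains to see that $x^0(\omega)\in c(V,V)$ a.s. I will show $R(G,G)$ is the constant $r=r(V)$, the Chebyshev radius of $V$, and that $C(G,G)=L^0(\mathcal{F},c(V,V))$. For $z\in G$, testing $R(G,z)$ against the constant elements $v\in V$ gives $R(G,z)\ge$ the class of $\sup_{v\in V}\|z^0(\omega)-v\|\ge r$. Conversely, every element of $G$ takes values in $V$ and $V$ has diameter bounded by a constant, so $R(G,z)\le$ the class of $\sup_{v}\|z^0(\omega)-v\|$; taking $z$ to be a constant in $c(V,V)$, which is nonempty by weak compactness, shows $R(G,G)=r$. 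Hence $z\in C(G,G)$ forces $\sup_v\|z^0(\omega)-v\|=r$ a.s., i.e.\ $z^0(\omega)\in c(V,V)$ a.s. Finally, modify $x^0$ on a null set so that it takes values in the closed set $c(V,V)$ everywhere.
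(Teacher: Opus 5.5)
Your route is the same as the paper's: lift each $T$ to $\hat T$ on $G=L^{0}(\mathcal{F},V)$, get $L^{0}$-convex compactness and random normal structure from Corollary~\ref{corollary5.9} and Proposition~\ref{proposition5.5}, apply Theorem~\ref{theorem6.7}, and pull back with Proposition~\ref{proposition2.21}. Your identification $C(G,G)=L^{0}(\mathcal{F},c(V,V))$ is also correct; the lower bound uses that $z^{0}$ is strongly measurable, so the lattice supremum of $\|z-v\|$ over the constants $v\in V$ is the class of the pointwise supremum.

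The gap is the surjectivity of $\hat T$, which Theorem~\ref{theorem6.7} needs. Your argument for it fails when $V$ is nonseparable, because strong measurability of each $T(\cdot,u)$ gives no control over the range of the pointwise inverse $S(\cdot,v)$. For a counterexample, take $\Omega=[0,1]$ with Lebesgue measure and $B=\ell^{2}(\Gamma)$ with $\Gamma=\{0\}\cup\{n':n\in\mathbb{N}\}\cup([0,1]\times\mathbb{N})$, and let $V$ be its closed unit ball. Let $U_\omega$ be the unitary that permutes the basis along the chain $\cdots\mapsto e_{(\omega,2)}\mapsto e_{(\omega,1)}\mapsto e_{0}\mapsto e_{1'}\mapsto e_{2'}\mapsto\cdots$ and fixes $e_{(t,k)}$ for $t\neq\omega$, and put $T(\omega,u)=U_\omega u$.
\begin{itemize}
\item For fixed $u$, the vector $T(\omega,u)$ is the same for all $\omega$ outside the countable set $\{t:\langle u,e_{(t,k)}\rangle\neq 0 \text{ for some } k\}$. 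So $T$ is a strong random operator, and each $T(\omega,\cdot)$ is a surjective isometry of $V$.
\item However, $S(\omega,e_{0})=e_{(\omega,1)}$, and these vectors are pairwise at distance $\sqrt{2}$. Hence no a.s.\ modification of $S(\cdot,e_{0})$ is separably valued.
\item Therefore the constant, and in particular simple, element $e_{0}$ is not in $\hat T(G)$.
\end{itemize}
This breaks both your primary argument and your fallback. The fallback correctly shows that $\hat T(G)$ is closed and $\sigma$-stable, but the claim that $\hat T(G)$ contains all simple elements is exactly what fails.

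As written, your proof is therefore complete in two situations. The first is when $V$ is separable. Then $\omega\mapsto\|S(\omega,v)-u\|=\|v-T(\omega,u)\|$ is measurable for $u$ in a countable dense subset of $V$, so $S$ is itself a sample-continuous strong random operator and $\hat S$ is a two-sided inverse of $\hat T$. The second is when the hypothesis ``strong random surjective isometric operator'' is read as including that $(\omega,v)\mapsto T(\omega,\cdot)^{-1}(v)$ is again a strong random operator; then $\widehat{T^{-1}}$ inverts $\hat T$.

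The paper's one-line lifting argument needs the same fact, since Theorem~\ref{theorem6.7} is stated for surjective maps. You should make one of these assumptions explicit or supply a different argument. Pointwise surjectivity alone still gives $R(G,\hat Tx)=R(G,x)$, so $\hat T$ maps $C(G,G)$ into itself, but that is not enough to invoke Theorem~\ref{theorem6.7}.
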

\par	
Just pointed out in \cite{MTGX2025}, all the measurable selection theorems currently available in \cite{W1977} fail for Corollary \ref{corollary6.8} and Corollary \ref{corollary 6.10} when the families $\mathcal{T}$ are uncountable. It seems to us that the measurable selection theorems in \cite{W1977} also fail even for a single strong random operator $T$ as in Corollary \ref{corollary5.11} or Corollary \ref{corollary6.9} whenever $V$ is not separable.
\par			
\section{Fixed point theorems for a random asymptotically nonexpansive mapping in a complete random uniformly convex $RN$ module}\label{section7}
\par			
In 1972, Goebel and Kirk established their fixed 
point theorem for asymptotically nonexpansive 
mappings in uniformly convex Banach spaces, see 
\cite{KS2001, SGT2025} for details, 
where some refined techniques concerning 
uniformly convex Banach spaces were used. 
However, the theory of random uniformly convex complete $RN$ 
modules is not so ripe as that of uniformly 
convex Banach spaces, it is impossible for us to 
generalize the Goebel and Kirk's result to the 
random setting only by mimicking their 
techniques. Motivated by the work \cite{GWXYC2025}, in \cite{SGT2025}
we made full use of the connection between random 
uniform convexity and uniform convexity (namely, 
Proposition \ref{proposition5.3}) so that we can decompose a 
random asymptotically nonexpansive mapping in a 
random uniformly convex $RN$ module into a series 
of smaller asymptotically nonexpansive mappings 
in a uniformly convex Banach space. In such an interesting way, we achieved our goals in \cite{SGT2025}. In 1991, Xu \cite{X91} established an elegant demiclosedness  principle for an asymptotically nonexpansive mapping in a uniformly convex Banach space, where the deep theory of weak topologies was involved. However, the theory of random conjugate spaces is much complicated than the theory of classical dual spaces, the theory of random weak topologies for $RN$ modules are being developed. In \cite{SGT2026} we were again forced to make full use of the intrinsic connection between random conjugate spaces and classical conjugate spaces (namely, Proposition~\ref{proposition7.5} below) in order to generalize  Xu's result \cite{X91} to the random setting by a decomposing technique stated above.
\par							
\begin{definition}[{\cite[Definition 1.5]{SGT2025}}]
\label{definition 7.1}
Let $(E,\|\cdot\|)$ be an $RN$ module with base $(\Omega,\mathcal{F},P)$ and $G$ be a nonempty subset of $E$. A mapping $f: G \to G$ is said to be random asymptotically nonexpansive if there exists a sequence $\{\xi_{m},{m \in \mathbb{N}}\}$ in $L^{0}_+(\mathcal{F})$ with $\{\xi_{m}, m \in \mathbb{N}\}$ convergent a.s. to $1$, such that $\|f^{m}(x) - f^{m}(y)\| \leq \xi_{m} \|x - y\|$ for any $x$ and $y$ in $G$ and any $m \in \mathbb{N}$. A mapping $f: G \to G$ is called an eventually random asymptotically nonexpansive mapping if there exist some $l \in \mathbb{N}$ and a sequence $\{\xi_{m},m \geq l\}$ in $L^{0}_+(\mathcal{F})$ with $\{\xi_{m}, m \ge l\}$ convergent a.s. to $1$, such that $\|f^{m}(x) - f^{m}(y)\| \leq \xi_{m} \|x - y\|$ for any $x$ and $y$ in $G$ and any $m \geq l$.
\end{definition}
\par	
\begin{theorem}[{\cite[Theorem 1.6]{SGT2025}}]\label{theorem 7.2}
	Let $(E,\|\cdot\|)$ be a $\mathcal{T}_{\varepsilon,\lambda}$-complete random uniformly convex $RN$ module with base $(\Omega,\mathcal{F},P)$ and $G$ be an a.s. bounded $\mathcal{T}_{\varepsilon,\lambda}$-closed $L^{0}$-convex subset of $E$. Then every random asymptotically nonexpansive mapping $f: G \to G$ has a fixed point.
\end{theorem}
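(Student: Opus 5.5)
The plan is to reduce Theorem~\ref{theorem 7.2} to the classical Goebel--Kirk theorem by the decomposition technique described at the start of this section. Since $G$ is a.s. bounded, $D(G)\in L^0_+(\mathcal{F})$. Likewise, each $\xi_m$ is finite a.s. and $\xi_m\to 1$ a.s. By Egoroff's theorem and a countable exhaustion, we can choose a countable partition $\{A_k,k\in\mathbb{N}\}$ of $\Omega$ to $\mathcal{F}$ with the following property: on each $A_k$, $D(G)\le k$, and $\xi_m\to 1$ uniformly on $A_k$ (more precisely, $\sup_{A_k}|\xi_m-1|=:\eta^{(k)}_m\to 0$, with $\xi_m$ bounded on $A_k$ for every $m$ after discarding finitely many $m$, or simply for all $m$ after a further refinement). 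Every $L^0$-convex set is finitely stable, and a $\mathcal{T}_{\varepsilon,\lambda}$-closed $L^0$-convex subset of a complete module is $\sigma$-stable by Corollary~\ref{corollary2.17}. Hence $G=\sum_k \tilde I_{A_k}G$ in the sense of concatenation.

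Next, localize. For each $k$ with $P(A_k)>0$, set $E_k=\tilde I_{A_k}E$, which is again a $\mathcal{T}_{\varepsilon,\lambda}$-complete random uniformly convex $RN$ module with base $(A_k,A_k\cap\mathcal{F},P(\cdot|A_k))$. Set $G_k=\tilde I_{A_k}G$. Since $f$ is $L^0$-Lipschitzian ($m=1$), it is $\sigma$-stable by Proposition~\ref{proposition3.2}. Thus $f_k(\tilde I_{A_k}x):=\tilde I_{A_k}f(x)$ is well defined on $G_k$, and $f_k^m=\tilde I_{A_k}f^m$. It therefore suffices to find a fixed point $x_k$ of each $f_k$ and then concatenate: $x=\sum_k\tilde I_{A_k}x_k\in G$ satisfies $\tilde I_{A_k}f(x)=f_k(\tilde I_{A_k}x)=x_k$ for all $k$, hence $f(x)=x$ by regularity.

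On a fixed $A_k$, pass to the Banach space $L^2(E_k)$, which is uniformly convex by Proposition~\ref{proposition5.3}. Since $D(G_k)\le k$, we have $G_k\subset L^2(E_k)$ after translating by a fixed point of $G_k$. Moreover $G_k$ is bounded, convex and closed in $\|\cdot\|_2$: $L^2$-convergence implies convergence in probability, and $G_k$ is $\mathcal{T}_{\varepsilon,\lambda}$-closed. Integrating the pointwise inequality gives
\[
\|f_k^m(x)-f_k^m(y)\|_2\le \bigl(\textstyle\sup_{A_k}\xi_m\bigr)\|x-y\|_2=:c^{(k)}_m\|x-y\|_2 ,
\]
with $c^{(k)}_m\to 1$ by uniform convergence on $A_k$. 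So $f_k$ is an asymptotically nonexpansive self-map of a nonempty bounded closed convex subset of a uniformly convex Banach space, and the Goebel--Kirk theorem yields a fixed point $x_k$.

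The main obstacle is the first step: building one partition that simultaneously bounds the random diameter and makes the convergence $\xi_m\to1$ uniform with finite constants $c_m^{(k)}$ for every $m$. For this I would apply Egoroff on $\{D(G)\le j\}$ to obtain sets of measure close to full, iterate on the remainders to exhaust $\Omega$ up to a null set, and intersect with the sets $\{\xi_m\le j_m\}$ for the finitely many $m$ preceding the uniform range. If needed, I would instead use the eventually-asymptotic version of Goebel--Kirk, which only requires the Lipschitz constants to be finite for large $m$. A secondary point to check is that the $L^2$-closedness and convexity of $G_k$ transfer correctly, including the translation needed so that $G_k\subset L^2(E_k)$.
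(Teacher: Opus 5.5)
Your proposal is correct and follows essentially the same route the paper describes for \cite{SGT2025}. It partitions $\Omega$ so that $D(G)$ is bounded and $\xi_m\to 1$ uniformly on each piece, and translates $\tilde{I}_{A_k}G$ into a bounded closed convex subset of the uniformly convex Banach space $L^p(\tilde{I}_{A_k}E)$ via Proposition~\ref{proposition5.3}. It then applies the classical Goebel--Kirk theorem on each piece and concatenates the local fixed points using the $\sigma$-stability of $G$; the details you flag (Egoroff plus exhaustion, well-definedness of $f_k$ from the $m=1$ Lipschitz bound, closedness of $\tilde{I}_{A_k}G$ via finite stability) all go through.
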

\par
\begin{theorem}[{\cite[Theorem 1.7]{SGT2025}}]\label{theorem 7.3}
	Under the same assumptions as in Theorem~\ref{theorem 7.2}, then the set of fixed points of $f$ is $\mathcal{T}_{\varepsilon,\lambda}$-closed and $L^{0}$-convex.
\end{theorem}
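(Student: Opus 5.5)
The plan is to prove the two assertions separately. Closedness will follow from $\mathcal{T}_{\varepsilon,\lambda}$-continuity of $f$. $L^{0}$-convexity will follow from a random version of the classical Goebel--Kirk argument via random uniform convexity.

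Write $F=\{x\in G: f(x)=x\}$, which is nonempty by Theorem~\ref{theorem 7.2}.

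\emph{Closedness.} Taking $m=1$ in Definition~\ref{definition 7.1} gives $\|f(x)-f(y)\|\le \xi_{1}\|x-y\|$. Hence $f$ is $L^{0}$-Lipschitzian, so it is $\mathcal{T}_{\varepsilon,\lambda}$-continuous, since multiplication by a fixed $\xi_1\in L^0_+(\mathcal{F})$ preserves convergence in probability. If $x_{k}\in F$ and $x_{k}\to x$ in $\mathcal{T}_{\varepsilon,\lambda}$, then $x\in G$ because $G$ is closed. Moreover $f(x_{k})=x_{k}\to x$ and $f(x_{k})\to f(x)$, so Hausdorffness gives $f(x)=x$.

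\emph{$L^{0}$-convexity.} Let $x,y\in F$, let $\lambda\in L^{0}_{+}(\mathcal{F})$ with $0\le\lambda\le 1$, and put $z=\lambda x+(1-\lambda)y\in G$. For every $m$,
\[
\|f^{m}(z)-x\|\le \xi_{m}\|z-x\|=\xi_m(1-\lambda)\|x-y\|,\qquad \|f^{m}(z)-y\|\le \xi_{m}\lambda\|x-y\|.
\]
Since $\xi_{m}\to1$ a.s., both $z$ and $f^{m}(z)$ lie almost on the random segment between $x$ and $y$, and $\|x-y\|\le \|f^m(z)-x\|+\|f^m(z)-y\|$. The key step is to show that $\|f^{m}(z)-z\|\to0$ a.s. I would first reduce, by $\sigma$-stability and localization, to the set $(\|x-y\|>0)\cap(0<\lambda<1)$; off this set $z$ equals $x$ or $y$ there and there is nothing to prove. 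I would then apply random uniform convexity (Definition~\ref{definition5.2}) to the normalized vectors $u=(x-f^{m}(z))/((1-\lambda)\|x-y\|)$ and $v=(z-x)/((1-\lambda)\|x-y\|)$, combined with the corresponding pair built from $y$, which is the standard "strict convexity forces equality in the triangle inequality" argument.

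The obstacle is that $\delta$ in Definition~\ref{definition5.2} is only uniform once we localize to a set $D$ where $\varepsilon$ is bounded below. I would handle this as the paper suggests, via Proposition~\ref{proposition5.3}: pass to $L^{p}(E)$ on pieces $A_{n}$ of a countable partition on which $\|x-y\|$, $\lambda$, $1-\lambda$ and $\sup_m\xi_m$ are bounded above and below. Egorov's theorem applied to $\xi_{m}\to1$ lets me arrange uniform convergence on these pieces. On each piece, $\tilde I_{A_{n}}G$ sits in the uniformly convex Banach space $L^{p}(\tilde I_{A_n}E)$, where the classical modulus of convexity yields $\|\tilde I_{A_{n}}(f^{m}(z)-z)\|_{p}\to0$. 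Passing to a subsequence gives a.s. convergence on $A_{n}$, and gluing over $n$ gives $f^{m_k}(z)\to z$ in $\mathcal{T}_{\varepsilon,\lambda}$. An alternative is to use the identity $\|z-w\|$-estimates directly in the Banach space with $w$ a midpoint, which avoids random moduli entirely.

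Finally, continuity of $f$ yields $f^{m_k+1}(z)=f(f^{m_k}(z))\to f(z)$. To conclude I need $f^{m_k+1}(z)\to z$ as well. This holds because the same argument applies to every subsequence of indices, so in fact $f^{m}(z)\to z$ along the full sequence in $\mathcal{T}_{\varepsilon,\lambda}$. Hence $f(z)=z$, and $F$ is $L^{0}$-convex.
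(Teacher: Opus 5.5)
Your proof is correct. It uses the same basic device the survey attributes to \cite{SGT2025}: localize on a countable partition where $\xi_m\to1$ uniformly (Egorov), then exploit uniform convexity piece by piece. The organization, however, is different.

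As the survey describes it, the source decomposes $f$ itself. On each piece $A_n$, the map $\tilde I_{A_n}g\mapsto\tilde I_{A_n}f(g)$ is a classical asymptotically nonexpansive self-map of the bounded closed convex set $\tilde I_{A_n}G$ in the uniformly convex Banach space $L^p(\tilde I_{A_n}E)$ (Proposition~\ref{proposition5.3}). The classical Goebel--Kirk theory can then be quoted as a black box and the pieces glued. A black-box route of that kind only sees scalar convex combinations, so $L^0$-convexity of $F$ has to be recovered afterwards. For instance, $F$ is finitely stable because $f$ is local, so $\lambda_kx+(1-\lambda_k)y\in F$ for finitely-valued $\lambda_k$, and closedness passes to the limit $\lambda_k\to\lambda$.

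You instead rerun the Goebel--Kirk convexity argument with a random coefficient. This gives $L^0$-convexity in one stroke, but it leaves you responsible for details your sketch glosses over.

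\emph{Keep the estimates pointwise until the end.} The $L^p$-norm versions of $\|f^m(z)-x\|\le\xi_m(1-\lambda)\|x-y\|$ and $\|f^m(z)-y\|\le\xi_m\lambda\|x-y\|$ do not force near-equality in the triangle inequality when $\lambda$ is nonconstant, because Minkowski's inequality loses exactly that information. Your pair $u,v$ is also not the right one. Take instead
$a=(x-f^m(z))/((1-\lambda)\|x-y\|)$ and $b=(f^m(z)-y)/(\lambda\|x-y\|)$. Then:
\begin{itemize}
\item $\|a\|,\|b\|\le\xi_m$ and $(1-\lambda)a+\lambda b=(x-y)/\|x-y\|$;
\item $z-f^m(z)=\lambda(1-\lambda)\|x-y\|(a-b)$;
\item writing $(1-\lambda)a+\lambda b=2\lambda\frac{a+b}{2}+(1-2\lambda)a$ on $(\lambda\le\frac12)$, and symmetrically on the complement, gives pointwise
$\|\frac{a+b}{2}\|\ge1-|\xi_m-1|/(2\min\{\lambda,1-\lambda\})$.
\end{itemize}
Only after these pointwise bounds should a modulus of convexity be applied.

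\emph{The $L^p$ detour is unnecessary.} Definition~\ref{definition5.2} applied with a \emph{constant} $\varepsilon$ already returns a $\delta$ bounded below by a positive constant. So on each $A_n$, where $\xi_m\to1$ uniformly and $\lambda$, $1-\lambda$, $\|x-y\|$ are bounded away from $0$, the pointwise bounds above give $\|f^m(z)-z\|\to0$ uniformly on $A_n$. The non-uniformity you actually need to tame is that of $\xi_m\to1$ and the possible degeneration of $\lambda$, $1-\lambda$, $\|x-y\|$, not that of $\delta$.

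Your closedness argument, the gluing, and the final step (convergence in probability along the full sequence, then $f^{m+1}(z)\to f(z)$ by continuity) are fine.
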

\par
\begin{theorem}[{\cite[Theorem 1.8 and Remark 2.9]{SGT2025}}]\label{theorem 7.4}
	Let $(E,\|\cdot\|)$ and $G$ be the same as in Theorem~\ref{theorem 7.2}. If $f: G \to G$ is an eventually random asymptotically nonexpansive mapping, then $f$ has a fixed point and the set of fixed points of $f$ is $\mathcal{T}_{\varepsilon,\lambda}$-closed and $L^{0}$-convex.
\end{theorem}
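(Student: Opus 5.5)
The plan is to reduce Theorem~\ref{theorem 7.4} to Theorems~\ref{theorem 7.2} and~\ref{theorem 7.3} by the classical trick of passing to a suitable iterate. Fix $l$ and $\{\xi_m, m\ge l\}$ as in the definition of an eventually random asymptotically nonexpansive mapping.

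\textbf{Step 1: the iterate $g=f^{l}$.} For every $k\in\mathbb{N}$ we have $kl\ge l$, so
$$\|g^{k}(x)-g^{k}(y)\| = \|f^{kl}(x)-f^{kl}(y)\|\le \xi_{kl}\|x-y\|,$$
and $\{\xi_{kl},k\in\mathbb{N}\}$, being a subsequence, still converges a.s. to $1$. Hence $g$ is random asymptotically nonexpansive on $G$. By Theorem~\ref{theorem 7.2}, $g$ has a fixed point, and by Theorem~\ref{theorem 7.3}, $F(g)$ is a nonempty, a.s. bounded, $\mathcal{T}_{\varepsilon,\lambda}$-closed $L^0$-convex subset of $G$.

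\textbf{Step 2: restricting $f$ to $F(g)$.} Since $f$ commutes with $g$, $f$ maps $F(g)$ into itself. I then want to show that $f|_{F(g)}$ is random asymptotically nonexpansive \emph{for all} $m\in\mathbb{N}$. Write $m=ql+r$ with $0\le r<l$ and choose $q$ large enough that $m'=m+(q'l)\ge l$. For $x\in F(g)$ we have $f^{m}(x)=f^{m+q'l}(x)$, so
$$\|f^{m}(x)-f^{m}(y)\|=\|f^{m+q'l}(x)-f^{m+q'l}(y)\|\le \xi_{m+q'l}\|x-y\|.$$
Choosing $q'$ with $q'l\ge l$ and defining $\eta_m=\xi_{m+l}$ gives a valid bound for every $m\ge1$, and $\eta_m\to1$ a.s. Therefore Theorem~\ref{theorem 7.2} applies to $f|_{F(g)}$ and yields a fixed point of $f$. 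This settles existence.

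\textbf{Step 3: closedness and $L^0$-convexity of $F(f)$.} Since $F(f)\subset F(g)$ and every fixed point of $f$ in $G$ lies in $F(g)$, we get $F(f)=F(f|_{F(g)})$. Theorem~\ref{theorem 7.3}, applied to $f|_{F(g)}$ on the set $F(g)$, shows that $F(f)$ is $\mathcal{T}_{\varepsilon,\lambda}$-closed and $L^0$-convex in $F(g)$. Because $F(g)$ is itself $\mathcal{T}_{\varepsilon,\lambda}$-closed in $E$, these properties hold in $E$ as well.

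\textbf{Main obstacle.} The subtle point is Step 2, where the shift $f^m=f^{m+l}$ on $F(g)$ must be used so that small iterates $m<l$ are controlled. Everything else is bookkeeping verifying that the hypotheses of Theorems~\ref{theorem 7.2} and~\ref{theorem 7.3} hold for $F(g)$: it is nonempty, a.s. bounded, closed and $L^0$-convex.
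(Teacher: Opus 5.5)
Your proof is correct. Theorems~\ref{theorem 7.2} and~\ref{theorem 7.3}, applied to $g=f^{l}$ with constants $\xi_{kl}$, show that $F(f^{l})$ is a nonempty, a.s. bounded, $\mathcal{T}_{\varepsilon,\lambda}$-closed $L^{0}$-convex set. It is $f$-invariant because $f$ commutes with $f^{l}$. For $x\in F(f^{l})$ you have $f^{m}(x)=f^{m}(f^{l}(x))=f^{m+l}(x)$, so $f|_{F(f^{l})}$ is random asymptotically nonexpansive with constants $\xi_{m+l}\to 1$ a.s. Since $F(f)=F(f|_{F(f^{l})})$, a second application of both theorems gives existence, closedness and $L^{0}$-convexity. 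The division $m=ql+r$ and the mixing of $q$ and $q'$ at the start of your Step~2 are unnecessary and confusing; the one-line identity above is all you use.

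The survey does not reproduce a proof of Theorem~\ref{theorem 7.4}. The only method it describes for Section~\ref{section7} is the decomposition technique of \cite{SGT2025}: localize the mapping along a partition of $\Omega$ so that, via Proposition~\ref{proposition5.3}, each piece is an asymptotically nonexpansive self-map in a uniformly convex Banach space. One then applies the classical Goebel--Kirk theorem on each piece and glues the fixed points by countable concatenation. Your route is genuinely different: it stays in the random setting and uses Theorems~\ref{theorem 7.2}--\ref{theorem 7.3} purely as black boxes.

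It also handles the one point where the eventual hypothesis is weaker. Localizing $f$ requires $f$ to be $\sigma$-stable. For a random asymptotically nonexpansive map this follows from the $m=1$ estimate via Proposition~\ref{proposition3.2}. An eventually random asymptotically nonexpansive $f$, however, gives no control on $\|f(x)-f(y)\|$, and it can be non-local even when $f^{2}$ is constant. On $F(f^{l})$ your argument restores $\|f(x)-f(y)\|\le\xi_{l+1}\|x-y\|$, so no separate decomposition is needed.

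The price is that existence now depends essentially on Theorem~\ref{theorem 7.3}, which is what makes $F(f^{l})$ an admissible domain. A direct decomposition argument, where it applies, gives existence without first knowing the structure of a fixed point set.
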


\par
\begin{proposition}[{\cite{G2010}}]\label{proposition7.5}
Let $(E,\|\cdot\|)$ be an $RN$ module over $\mathbb{K}$ with base $(\Omega,\mathcal{F},P)$, $p$ any given positive number with $1 \leq p < +\infty$ and $q \in (1,+\infty]$ the H\"older conjugate number of $p$. Then $(L^{q}(E^*),\|\cdot\|_{q})$ is isometrically isomorphic onto $(L^{p}(E),\|\cdot\|_{p})'$ under the canonical mapping $T$, where $(L^{p}(E),\|\cdot\|_{p})'$ stands for the conjugate space of $(L^{p}(E),\|\cdot\|_{p})$ and $T(g): L^{p}(E) \to \mathbb{K}$ is defined by $T(g)(f) = \int_{\Omega} gf \, dP$ for any $g \in L^{q}(E^*)$ and any $f \in L^{p}(E)$.
\end{proposition}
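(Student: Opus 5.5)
The plan has four steps: show the canonical map is well defined and norm-decreasing, reduce the reverse inequality to a pointwise norming problem in $E$, solve that problem with a stable family of near-norming elements, and finally prove surjectivity by building a random functional from a given continuous functional on $L^{p}(E)$.

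\textbf{Step 1: $T$ is well defined and $\|T(g)\|\le\|g\|_{q}$.} For $g\in L^{q}(E^*)$ and $f\in L^{p}(E)$ we have $|g(f)|\le \|g\|\,\|f\|$ almost surely, by Definition~\ref{definition2.11}. H\"older's inequality then gives
$$\int_\Omega |g(f)|\,dP\le \|g\|_{q}\,\|f\|_{p}.$$
So $T(g)$ is a bounded linear functional on $L^{p}(E)$ with $\|T(g)\|\le \|g\|_{q}$. Linearity of $T$ is clear.

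\textbf{Step 2: reduce the reverse inequality to a pointwise statement.} Fix $g$ and $\varepsilon\in L^{0}_{++}(\mathcal{F})$ small. We want $x\in E$ with $\|x\|\le 1$ and $|g(x)|\ge \|g\|-\varepsilon$ on $\Omega$. Given such an $x$, we rotate it by a unimodular $L^{0}(\mathcal{F},\mathbb{K})$ factor so that $g(x)\ge 0$. We then set $f=\|g\|^{q-1}x$ in the case $1<p$, truncated on the sets $(\|g\|\le n)$ so that $f\in L^{p}(E)$. This gives $\int g(f)\,dP\approx \int \|g\|^{q}\,dP$ while $\|f\|_{p}\approx \|g\|_{q}^{q-1}$, which yields $\|T(g)\|\ge\|g\|_{q}$. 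For $p=1$ (so $q=\infty$), we instead take $f=\tilde I_{A}\,x$, where $A=(\|g\|>\|g\|_{\infty}-\delta)$.

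\textbf{Step 3: construct the near-norming element.} Since $\|g\|=\bigvee\{|g(x)|:\|x\|\le1\}$ and this set of values is $\sigma$-stable (up to unimodular rotation), we can invoke Corollary~\ref{corollary4.5}. Alternatively, the supremum of a directed-upward family in $L^{0}$ is attained along a sequence, and we then glue by countable concatenation, using that $E$ is regular and that $\mathcal{T}_{\varepsilon,\lambda}$-completeness is not even needed here: only finite stability and a countable partition argument in the module are used. This step requires showing that the family $\{|g(x)|:\|x\|\le 1\}$ is upward directed, which follows from finite stability, and that the concatenation stays in the unit ball.

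\textbf{Step 4: surjectivity (the main obstacle).} Given $\phi\in (L^{p}(E))'$, we define $g$ on $E$ via a Radon--Nikodym argument. For $x\in E$, truncate by $A_n=(\|x\|\le n)$ and consider the set function $\mu_x(A)=\phi(\tilde I_{A\cap A_n}x)$. It is countably additive, since $\|\tilde I_{A}x\|_p\to0$ as $P(A)\to0$ for $p<\infty$, and it is absolutely continuous with respect to $P$. Its density gives $g(x)$ on $A_n$, and these densities are consistent in $n$, so they glue to an element of $L^{0}$. The resulting $g$ is $L^{0}$-linear, and it satisfies $|g(x)|\le \eta\|x\|$ for some $\eta\in L^{0}_+$, so $g\in E^*$.

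The difficulty is proving that $\|g\|\in L^{q}$ with $\|g\|_q\le\|\phi\|$. To do this, apply Step 3's near-norming elements to $g$ (legitimate once $g\in E^*$) and test $\phi$ on the $f$ built in Step 2. This gives $\int\|g\|^{q}\,dP$ bounded by $\|\phi\|\,(\int\|g\|^{q}\,dP)^{1/p}$ after truncation, and monotone convergence finishes. Finally, $T(g)=\phi$ on truncations of elements of $L^{p}(E)$ by construction. Both sides are continuous on $L^{p}(E)$, and the truncations are dense there, so $T(g)=\phi$.
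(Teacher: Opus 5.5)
Your overall plan is the standard one and is sound in outline: H\"older for $\|T(g)\|\le\|g\|_q$, Radon--Nikodym densities of $A\mapsto\phi(\tilde{I}_{A\cap(\|x\|\le n)}x)$ to build $g$, and H\"older-extremal test functions. Step 1 and the gluing and density parts of Step 4 are fine. Two steps, however, fail as written.

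\textbf{Step 4: $g\in E^*$ is asserted, not proved.} This is the genuine gap. You state without argument that $|g(x)|\le\eta\|x\|$ for some $\eta\in L^{0}_{+}(\mathcal{F})$, and you make the $L^q$-estimate conditional on it (``legitimate once $g\in E^*$''). But that bound is the crux. The construction only gives, for each fixed $x$, the integral bounds $|\int_B g(x)\,dP|\le\|\phi\|(\int_B\|x\|^p\,dP)^{1/p}$. For $p>1$ these give no pointwise bound uniform in $x$, and a priori $\eta:=\bigvee\{|g(x)|:\|x\|\le 1\}\in\bar{L}^{0}(\mathcal{F})$ could be $+\infty$ on a set of positive probability.

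The fix is to run your own test before knowing $g\in E^*$, with $\eta\wedge n$ in place of $\|g\|$:
\begin{enumerate}
\item Since $g$ is $L^0$-linear, $\{|g(x)|:\|x\|\le 1\}$ is upward directed, so there are $x_k$ with $\|x_k\|\le1$ and $|g(x_k)|\uparrow\eta$.
\item Test $\phi$ on $(\eta\wedge n)^{q-1}u_kx_k$, where $u_k$ is unimodular with $u_kg(x_k)=|g(x_k)|$. This element has random norm at most $n^{q-1}$, so $\phi$ of it equals $\int g(\cdot)\,dP$ by construction.
\item Letting $k\to\infty$ by monotone convergence gives $\int(\eta\wedge n)^{q-1}\eta\,dP\le\|\phi\|(\int(\eta\wedge n)^q\,dP)^{1/p}$.
\item This forces $\eta<\infty$ a.s. and $(\int(\eta\wedge n)^q\,dP)^{1/q}\le\|\phi\|$, hence $\|\eta\|_q\le\|\phi\|$.
\item Normalizing $x$ on $(\|x\|>0)$ then yields $|g(x)|\le\eta\|x\|$, so $g\in E^*$ with $\|g\|=\eta$.
\end{enumerate}
For $p=1$ your displayed inequality does not make sense. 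There, $|g(x)|\le\|\phi\|\,\|x\|$ follows directly from $|\int_B g(x)\,dP|\le\|\phi\|\int_B\|x\|\,dP$ for all $B\in\mathcal{F}$, after a unimodular rotation.

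\textbf{Step 3: the near-norming element need not exist.} The statement allows an arbitrary $RN$ module $E$, which need not be $\sigma$-stable. Regularity gives uniqueness of a countable concatenation, not existence, and Corollary~4.5 requires a $\sigma$-stable set. For a counterexample:
\begin{itemize}
\item Let $E\subset L^{0}(\mathcal{F},\ell^{2})$ consist of the elements that a.s.\ take values in $\mathrm{span}\{e_1,\dots,e_N\}$ for some $N$.
\item Let $\{A_n\}$ be a countable partition of $\Omega$ with every $P(A_n)>0$, and set $g(x)=\langle x,\sum_n\tilde{I}_{A_n}e_n\rangle$.
\item Then $\|g\|=1$, yet for each $x\in E$ there is $N$ with $g(x)=0$ on $\bigcup_{n>N}A_n$.
\item So no $x$ in the random unit ball satisfies $|g(x)|\ge\|g\|-\frac{1}{2}$ on $\Omega$.
\end{itemize}
Fortunately no such element is needed. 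With $x_k$ in the unit ball and $|g(x_k)|\uparrow\|g\|$ as above, set $f_k=\|g\|^{q-1}u_kx_k$. Then $\|f_k\|_p\le\|g\|_q^{q-1}$ and $T(g)(f_k)=\int\|g\|^{q-1}|g(x_k)|\,dP\uparrow\|g\|_q^q$ by monotone convergence. For $q=\infty$, use $\tilde{I}_{A}u_kx_k$ with $A=(\|g\|>\|g\|_\infty-\delta)$ and large $k$. This is all the isometry requires.
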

\par
Proposition~\ref{proposition7.5} not only unifies all the dual representation theorems of  Lebesgue--Bochner function spaces but also has played a crucial role in the historical development of the theory of random conjugate spaces for $RN$ modules, see \cite{G2010} for details.
\par
\setcounter{theorem}{5}
\begin{theorem}[{\cite[Theorem~2.10]{SGT2026}}]\label{theorem 7.6}
Let $(E,\|\cdot\|)$ be a $\mathcal{T}_{\varepsilon,\lambda}$-complete random uniformly convex $RN$ module over $\mathbb{K}$ with base $(\Omega,\mathcal{F},P)$, $G$ an a.s. bounded, $\mathcal{T}_{\varepsilon,\lambda}$-closed $L^0$-convex subset of $E$ and $f: G \to G$ a random asymptotically nonexpansive mapping. Then $I-f$ is random demiclosed at $\theta$ (the null in $E$), namely, for each sequence $\{x_{n}, n \in \mathbb{N}\}$ in $G$, if $\{x_{n}, n \in \mathbb{N}\}$ converges in $\sigma(E,E^*)$ to $x$ and $\{(I-f)(x_{n}), n \in \mathbb{N}\}$ converges in $\mathcal{T}_{\varepsilon,\lambda}$ to $\theta$, then $(I-f)(x) = \theta$. Here $I$ is the identity operator on $E$, that $\{x_{n}, n \in \mathbb{N}\}$ converges in $\sigma(E,E^{*})$ to $x$ means that $\{f(x_{n}), n \in \mathbb{N}\}$ converges in probability to $f(x)$ for any given $f \in E^{*}$.
\end{theorem}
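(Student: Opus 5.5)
Plan: reduce the random demiclosedness to Xu's classical demiclosedness principle \cite{X91} in the uniformly convex Banach space $L^{p}(E)$ (say $p=2$), using Proposition~\ref{proposition5.3} for uniform convexity and Proposition~\ref{proposition7.5} to translate $\sigma(E,E^{*})$-convergence into weak convergence in $L^{p}(E)$. The decomposition technique of \cite{SGT2025} will handle the fact that $\|\cdot\|$-bounds and the constants $\xi_m$ are only a.s. finite, not essentially bounded.

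First I fix the setting. $G$ is a.s. bounded, so $D(G)\in L^0_+(\mathcal{F})$, and the $\xi_m$ converge a.s. to $1$. By Egoroff's theorem and the finiteness of $D(G)$, I choose a countable partition $\{A_k,k\in\mathbb{N}\}$ of $\Omega$ with the following properties on each $A_k$: $D(G)\le k$; $\xi_m\to 1$ uniformly; and (after splitting further) the $\sup_m \xi_m$ is bounded.

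Fixing $k$, I pass to $E_k=\tilde I_{A_k}E$ (an $RN$ module over $(A_k,\mathcal{F}\cap A_k,P_{A_k})$, still random uniformly convex), $G_k=\tilde I_{A_k}G$ (translated by $\tilde I_{A_k}x_1$ for a fixed $x_1\in G$), and $f_k(\tilde I_{A_k}x)=\tilde I_{A_k}f(x)$. I must check that $f_k$ is well defined, which requires $f$ to be $\sigma$-stable. This follows from Proposition~\ref{proposition3.2}, applied to $f$ itself via the $m=1$ inequality $\|f(x)-f(y)\|\le\xi_1\|x-y\|$, provided $G$ is $\sigma$-stable. That holds by Corollary~\ref{corollary2.17}, since $G$ is closed and $L^0$-convex in a complete module. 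Then $G_k$ is a bounded closed convex subset of the uniformly convex Banach space $L^{2}(E_k)$, by Proposition~\ref{proposition5.3}. Moreover
\[
\|f_k^m x-f_k^m y\|_2\le c_m^{(k)}\|x-y\|_2,\qquad c_m^{(k)}=\|\xi_m\tilde I_{A_k}\|_\infty\to 1,
\]
so $f_k$ is asymptotically nonexpansive in the classical sense.

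Next I transfer the convergence hypotheses. Let $x_n\to x$ in $\sigma(E,E^*)$ and $(I-f)x_n\to\theta$ in $\mathcal{T}_{\varepsilon,\lambda}$. On $A_k$ the norms $\|\tilde I_{A_k}(x_n-x_1)\|$ are bounded by $k$. Hence convergence in probability together with uniform boundedness gives $\|\tilde I_{A_k}(x_n-f x_n)\|_2\to 0$ by dominated convergence. For weak convergence in $L^2(E_k)$, take $g\in L^2(E_k^*)$, which represents a general functional by Proposition~\ref{proposition7.5}. Then $g(x_n-x)\to 0$ in probability, and $|g(x_n-x)|\le 2k\|g\|$, which is integrable. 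Dominated convergence in probability then gives $\int g(x_n-x)\,dP\to 0$. Xu's demiclosedness principle now yields $f_k(\tilde I_{A_k}x)=\tilde I_{A_k}x$ for each $k$. Regularity of $E$ then gives $f(x)=x$.

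The main obstacle is the careful reduction. This includes verifying that $f_k$ is well defined via $\sigma$-stability, and confirming that $\tilde I_{A_k}x\in G_k$, i.e.\ that $x\in G$. Closed $L^0$-convex sets should be $\sigma(E,E^*)$-sequentially closed by a random Hahn–Banach separation argument; alternatively, I would get this on each piece from Mazur's theorem in $L^2(E_k)$ together with $\mathcal{T}_{\varepsilon,\lambda}$-closedness. I would also need to make the uniformity of $c_m^{(k)}\to1$ rigorous via the Egoroff partition.
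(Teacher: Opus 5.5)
Your proposal is correct and follows essentially the route the paper describes for \cite{SGT2026}: a countable Egoroff-type decomposition of $\Omega$ reduces the problem to Xu's demiclosedness principle \cite{X91} in the uniformly convex Banach space $L^{p}$ of each piece, with Proposition~\ref{proposition5.3} supplying uniform convexity and Proposition~\ref{proposition7.5} converting $\sigma(E,E^{*})$-convergence into weak convergence. The one point to tighten is that your domination $\|\tilde I_{A_k}(x_n-x)\|\le 2k$, and hence $\tilde I_{A_k}(x-x_1)\in L^{2}(E_k)$, already needs a bound on $\|x-x_1\|$ over $A_k$, so secure it before the Mazur step that places $x$ in $G$; you can do this either by refining the partition along the values of $\|x-x_1\|\in L^0_+(\mathcal{F})$, or by testing against a norming functional $h\in E^{*}$ with $h(x-x_1)=\|x-x_1\|$, which gives $\|x-x_1\|\le D(G)$.
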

\par
\section{Concluding remarks}\label{section8}

Although this paper only gives a survey of metric fixed point theory in random functional analysis, which, combined with the works \cite{GWXYC2025, SGT2025,SGT2026,TMG2024,TMGC2025,TMGYS2025}, has given the reader a complete picture of the progress made in fixed point theory in random functional analysis in the past 15 years. From the survey, we believe that the reader should have seen that these developments of fixed point theory have considerably benefited from the whole developments of random functional analysis in the past 30 plus years. In particular, random functional analysis has obtained a rapid development in connection with its applications in random equations, dynamic mathematical finance and nonsmooth differential geometry on metric measure spaces in the past 10 years. Therefore, we may hope that these developments of fixed point theory will find more applications in the closely related fields.
\par


\section*{Acknowledgements}
We owe thanks to many: To Professors Goong Chen, Hongkun Xu and George Xianzhi Yuan for their substantial collaborations with us on fixed point theory in random functional analysis; to Professor Kehe Zhu for his kind support for our work in random functional analysis; to Professors Nicola Gigli and Enrico Pasqualetto for their enlightening academic exchanges; and to the continuous financial support over the years from the National Natural Science Foundation of China.



\begin{thebibliography}{00}



\bibitem{B1922}
S. Banach,
{\it Sur les op\'{e}rations dans les ensembles et leurs applications aux \'{e}quations int\'{e}grale},
Fund. Math.
{\bf 3} (1922),
133--181.


\bibitem{BK1967}
L. P. Belluce and W. A. Kirk,
{\it Nonexpansive mappings and fixed points in Banach spaces},
Illinois J. Math.
{\bf 11} (1967),
474--479.


\bibitem{B1976}
A. T. Bharucha-Reid,
{\it Fixed point theorems in probabilistic analysis},
Bull. Amer. Math. Soc.
{\bf 82}(5) (1976),
641--657.




\bibitem{BD1948}
M. S. Brodskii and D. P. Milman,
{\it On the center of a convex set (in Russian)},
Dokl. Akad. Nauk SSSR (N.S.)
{\bf 59} (1948),
837--840.




\bibitem{B1912}
L. E. J. Brouwer,
{\it \"{U}ber Abbildung von Mannigfaltigkeiten},
Math. Ann.
{\bf 71}(4) (1912),
598.



\bibitem{B1965}
F. E. Browder,
{\it Nonexpansive nonlinear operators in a Banach space},
Proc. NatL. Acad. Sci. U.S.A.
{\bf 54}(4) (1965),
1041--1044. 




\bibitem{BPS2023}
E. Bru\'{e}, E. Pasqualetto and D. Semola,
{\it Rectifiability of the reduced boundary for sets of finite perimeter over RCD($K,N$) spaces},
J. Eur. Math. Soc.
{\bf 25}(2) (2023),
413--465.



\bibitem{CLP2025}
E. Caputo, M. Lu\v{c}i\'{c} and E. Pasqualetto,
{\it Parallel transport on non-collapsed RCD(K,N) spaces},
J. Reine Angew Math.
{\bf 819} (2025),
135--204.



\bibitem{C1976}
J. Caristi,
{\it Fixed point theorems for mappings satisfying inwardness conditions},
Trans. Amer. Math. Soc.
{\bf 215} (1976),
241--251.







\bibitem{DJKK2016}
S. Drapeau, A. Jamneshan, M. Karlicek and M. Kupper,
{\it The algebra of conditional sets and the concepts of conditional topology and compactness},
J. Math. Anal. Appl.
{\bf 437} (2016),
561--589.









\bibitem{DS1958}
N. Dunford and J. T. Schwartz, {\it Linear Operators (I): General Theory}, 
Interscience, New York 
 (1958).







\bibitem{E1974}
I. Ekeland, {\it On the variational principle}, J. Math. Anal. Appl. 
{\bf 47} (1974), 324--353.





\bibitem{FKV2009}
D. Filipovi\'{c}, M. Kupper and N. Vogelpoth, 
{\it Separation and duality in locally $L^{0}$-convex modules}, J. Funct. Anal.
 {\bf 256} (2009), 
 3996--4029.







\bibitem{G2018}
N. Gigli, {\it Nonsmooth differential geometry~---~an approach tailored for spaces with Ricci curvature bounded from below }, Mem. Amer. Math. Soc.
 {\bf 251}(1196) (2018).









\bibitem{GR1984}
K. Goebel and S. Reich,
 {\it Uniform Convexity, Hyperbolic Geometry, and Nonexpansive mappings},
  Monogr. Textbooks in Pure and Appl. Math.
   {\bf 83}, Marcel Dekker, Inc., New York (1984).








\bibitem{GD2003}
A. Granas and J. Dugundji, 
{\it Fixed Point Theory}, 
Springer-Verlag, New York (2003).


\bibitem{G1992}
T. X. Guo, {\it Random metric theory and its applications}, Ph.D. thesis, Xi'an Jiaotong University, China (1992).







\bibitem{G1993}
T. X. Guo, { \it A new approach to probabilistic functional analysis}, in: Proceedings of the first China Postdoctoral Academic Conference, pp. 1150--1154, The China National Defense and Industry Press, Beijing (1993).







\bibitem{G1999}
T. X. Guo, {\it Some basic theories of random normed linear spaces and random inner product spaces}, Acta Anal. Funct. Appl.
{\bf 1} (1999),
 160--184.




\bibitem{G2010}
T. X. Guo,
{\it Relations between some basic results derived from two kinds of topologies},
J. Funct. Anal.
{\bf 258}(9) (2010),
3024--3047.




\bibitem{G2024}
T. X. Guo,
{\it Optimization of conditional convex risk measures},
Proceedings of the 8th International Congress of Chinese Mathematicians (Beijing, 2019), Vol. 1, pp. 347--371, 
International Press of Boston, Somerville (2024).




\bibitem{GMT2024}
T. X. Guo, X. H. Mu and Q. Tu,
{\it The relations among the notions of various kinds of stability and their applications},
Banach J. Math. Anal.
{\bf 18} (2024),
42.




\bibitem{GWXYC2025}
T. X. Guo, Y. C. Wang, H. K. Xu, G. Yuan and G. Chen,
{\it A noncompact Schauder fixed point theorem in random normed modules and its applications},
Math. Ann.
{\bf 391} (2025),
3863--3911.




\bibitem{GWYZ2020}
T. X. Guo, Y. C. Wang, B. X. Yang and E. X. Zhang,
{\it On $d$-$\sigma$-stability in random metric spaces and its applications},
J. Nonlinear Convex Anal.
{\bf 21}(6) (2020),
1297--1316.




\bibitem{GY2012}
T. X. Guo and Y. J. Yang,
{\it Ekeland's variational principle for an $\bar{L}^{0}$-valued function on a complete random metric space},
J. Math. Anal. Appl.
{\bf 389} (2012),
1--14.




\bibitem{GZ2010}
T. X. Guo and X. L. Zeng,
{\it Random strict convexity and random uniform convexity in random normed modules},
Nonlinear Anal.
{\bf 73}(5) (2010),
1239--1263.




\bibitem{GZ2012}
T. X. Guo and X. L. Zeng,
{\it An $L^0(\mathcal{F}, R)$-valued function's intermediate value theorem and its applications to random uniform convexity},
Acta Math. Sin.
{\bf 28}(5) (2012),
909--924.




\bibitem{GZWG2021}
T. X. Guo, E. X. Zhang, Y. C. Wang and Z. C. Guo,
{\it Two fixed point theorems in complete random normed modules and their applications to backward stochastic equations},
J. Math. Anal. Appl.
{\bf 483}(2) (2020),
123644.




\bibitem{GZWW2021}
T. X. Guo, E. X. Zhang, Y. C. Wang and M. Z. Wu,
{\it $L^{0}$-convex compactness and its applications to random convex optimization and random variational inequalities},
Optimization,
{\bf 70}(5--6) (2021),
937--971.



\bibitem{GZWYYZ2017}
T. X. Guo, E. X. Zhang, Y. C. Wang, M. Z. Wu, B. X. Yang, G. Yuan and X. L. Zeng
{\it On random convex analysis},
J. Nonlinear Convex Anal.
{\bf 18}(11)  (2017),
1967--1996.









\bibitem{GZWY2020}
T. X. Guo, E. X. Zhang, Y. C. Wang and G. Yuan,
{\it $L^0$-convex compactness and random normal structure in $L^0(\mathcal{F}, B)$},
Acta Math. Sci.
{\bf 40}(2)  (2020),
457--469.



\bibitem{H1957}
O. Han\v{s},
{\it Random fixed point theorems},
Transactions of the first Prague conference on information theory, statistical decision functions, random processes, pp. 105--125, Publishing House of the Czechoslovak Academy of Sciences, Prague (1957).





\bibitem{I1977}
S. Itoh,
{\it A random fixed point theorem for a multivalued contraction mapping},
Pacific J. Math.
{\bf 68} (1977),
85--90.






\bibitem{Kirk1965}
W. A. Kirk,
{\it A fixed point theorem for mappings which do not increase distances},
Amer. Math. Monthly
{\bf 72} (1965),
1004--1006.




\bibitem{Kirk1981}
W. A. Kirk,
{\it Fixed point theory for nonexpansive mappings},
Fixed point theory (Sherbrooke, Que, 1980) pp. 484--505, Lecture Notes in Math., Vol. 886, Springer, Berlin (1981).




\bibitem{KS2001}
A. W. Kirk and B. Sims,
{\it Handbook of metric fixed point theory},
Kluwer Academic Publishers, Dordrecht (2001).




\bibitem{L1974a}
T. C. Lim,
{\it Characterizations of normal structure},
Proc. Amer. Math. Soc.
{\bf 43}(2) (1974),
313--319.




\bibitem{L1974b}
T. C. Lim,
{\it A fixed point theorem for families on nonexpansive mappings},
Pacific J. Math.
{\bf 53}(2) (1974),
487--493.




\bibitem{LLPV2003}
T. C. Lim, P. K. Lin, C. Petalas and T. Vidalis,
{\it Fixed points of isometries on weakly compact convex sets},
J. Math. Anal. Appl.
{\bf 282} (2003),
1--7.




\bibitem{KK1999}
A. G. Kusraev and S. S. Kutateladze,
{\it Boolean Valued Analysis},
Mathematics and its Applications, Springer, Netherlands, Amsterdam (1999).




\bibitem{LPV2024}
M. Lu\v{c}i\'{c}, E. Pasqualetto and J. Vojnov\'{i}c,
{\it On the reflexivity properties of Banach bundles and Banach modules},
Banach J. Math. Anal.
{\bf 18} (2024),
7.




\bibitem{MTGX2025}
X. H. Mu, Q. Tu, T. X. Guo and H. K. Xu,
{\it Common fixed point theorems for a commutative family of nonexpansive mappings in complete random normed modules},
J. Fixed Point Theory Appl.
{\bf 27} (2025),
78.




\bibitem{N1969}
S. B. Nadler, Jr.,
{\it Multivalued contraction mappings},
Pacific J. Math.
{\bf 30}(69) (1969),
475--488.




\bibitem{RV2015}
S. Rajesh and P. Veeramani,
{\it Chebyshev centers and fixed point theorems},
J. Math. Anal. Appl.
{\bf 422} (2015),
880--885.




\bibitem{S1930}
J. Schauder,
{\it Der Fixpunktsatz in Funktionalr\"{a}umen},
Studia Math.
{\bf 2}(1) (1930),
171--180.




\bibitem{SS2005}
B. Schweizer and A. Sklar,
{\it Probabilistic Metric Spaces},
Elsevier, New York (1983); Dover Publications, New York (2005).




\bibitem{SGT2025}
Y. Y. Sun, T. X. Guo and Q. Tu,
{\it A fixed point theorem for random asymptotically nonexpansive mappings},
New York J. Math.
{\bf 31} (2025),
182--194.
\bibitem{SGT2026}
Y. Y. Sun, T. X. Guo and Q. Tu,
{\it A random demiclosedness principle for random asymptotically nonexpansive mappings},
submitted to J. Nonlinear Convex Anal. 
arXiv: 2603.22764 (2026).



\bibitem{TMG2024}
Q. Tu, X. H. Mu and T. X. Guo,
{\it The random Markov-Kakutani fixed point theorem in a random locally convex module},
New York J. Math.
{\bf 30} (2024),
1196--1219.



\bibitem{TMGC2025}
Q. Tu, X. H. Mu, T. X. Guo and G. Chen,
{\it A new complete proof of the random Brouwer fixed point theorem and its implied consequences of unification},
arXiv: 2507.08521 (2025).



\bibitem{TMGYS2025}
Q. Tu, X. H. Mu, T. X. Guo, G. Yang and Y. Y. Sun,
{\it The random Kakutani fixed point theorem in random normed modules},
New York J. Math.
{\bf 31} (2025),
1543--1564.



\bibitem{W1977}
D. H. Wagner,
{\it Survey of measurable selection theorems},
SIAM J. Control Optim.
{\bf 15}(5) (1977),
859--903.



\bibitem{Xu1993}
H. K. Xu,
{\it Some random fixed point theorems for condensing and nonexpansive operators},
Proc. Amer. Math. Soc.
{\bf 10}(2) (1990),
396--400.

\bibitem{X91}
H. K. Xu,
{\it Existence and convergence for fixed points of mappings of asymptotically nonexpansive type},
Nonlinear Anal.
{\bf16} (1991),
1139--1146.
\bibitem{Z2010}
G. \v{Z}itkovi\'{c},
{\it Convex compactness and its applications},
Math. Financ. Econ.
{\bf 3}(1) (2010),
1--12.































\end{thebibliography}
\end{document}